\documentclass[12pt]{article}
\usepackage{multicol}
\oddsidemargin = 1mm  \evensidemargin = 1mm  \topmargin = 1mm 
\headheight = 1mm   \headsep = 0mm  \textwidth = 155mm \textheight = 230mm

\usepackage{amsmath, amssymb, amscd, enumerate, amsfonts, latexsym, amsxtra, 
amsthm, bm, multirow}
\usepackage{indentfirst}
\newtheorem{Theorem}{Theorem}[section]
\newtheorem{Proposition}[Theorem]{Proposition}
\newtheorem{Lemma}[Theorem]{Lemma}

\begin{document}
\allowdisplaybreaks[3]
\centerline{\Large Primitive idempotents of the hyperalgebra for the $r$-th} \vspace{3mm}
\centerline{\Large Frobenius kernel of ${\rm SL}(2,k)$} \vspace{7mm}
\centerline{Yutaka Yoshii 
\footnote{ E-mail address: yutaka.yoshii.6174@vc.ibaraki.ac.jp}}  \vspace{5mm}
\centerline{College of Education,   
Ibaraki University,}
\centerline{2-1-1 Bunkyo, Mito, Ibaraki, 310-8512, Japan}
\begin{abstract}
In this paper we construct  primitive idempotents of  
the hyperalgebra for the $r$-th Frobenius kernel of the algebraic group ${\rm SL}(2,k)$. 
\end{abstract}
{\itshape Key words:} Primitive idempotents, Hyperalgebras, Algebraic groups, Frobenius kernels
\\
{\itshape Mathematics Subject Classification 2010:} 16S30; 17B45; 20G05

\section{Introduction}
 Let $k$ be an algebraically closed field of characteristic $p>0$. Let $G$ be a connected, simply connected and semisimple algebraic group over $k$ which is split over the finite field $\mathbb{F}_p$ of order $p$. 

The representation theory of $G$ is closely related to that of   
the $r$-th Frobenius kernel $G_r$.  For example, 
all simple modules for $G_r$ can be lifted to some 
simple modules for $G$.  In addition, the  projective indecomposable modules (PIMs) for $G_r$ can be 
lifted to some modules for $G$ if $p$ is not too small.  
So it is important to construct PIMs for $G_r$ 
in order to know the structure of some important  $G$-modules. 

On the other hand,  the representation theory of $G$ can be identified with the 
locally finite representation theory of the corresponding 
(infinite-dimensional) $k$-algebra $\mathcal{U}$ which is called the hyperalgebra of $G$, and the representation theory of 
$G_r$ can be identified with that of the corresponding finite-dimensional 
hyperalgebra $\mathcal{U}_r$.  If we have a decomposition of the unity 
$1 \in \mathcal{U}_r$ into a sum of pairwise orthogonal primitive idempotents, we can construct the PIMs for $\mathcal{U}_r$. Unfortunately, it seems that such a decomposition 
where each primitive idempotent is explicitly described is not known except 
when $G$ is of type $A_1$ (i.e. $G={\rm SL}(2,k)$) and $r=1$, in this case the decomposition is given in  Seligman's paper \cite{seligman03}.     

In this paper we shall generalize Seligman's result. More concretely, when 
$G={\rm SL}(2,k)$, we construct pairwise orthogonal primitive idempotents  
whose sum is the unity $1$ explicitly in 
$\mathcal{U}_r$ for an arbitrary positive integer $r$.

The main result is given in Section 5. In Section 4 
we first construct the primitive idempotents 
in $\mathcal{U}_1$ using Seligman's method.
Next, in Section 5 we construct primitive idempotents in $\mathcal{U}_r$ for $r \geq 2$ using 
the primitive idempotents in $\mathcal{U}_1$ given in the previous section. There we use 
a linear map constructed in \cite{gros-kaneda11}, which 'splits' the Frobenius map on 
$\mathcal{U}$.  Finally we also determine the PIMs for 
$\mathcal{U}_r$ which the idempotents generate.    For convenience, 
this result will be given more generally for a larger algebra $\mathcal{U}_{r,r'}$ 
$(r'>r)$ in Theorem 5.8. 

\section{Preliminaries}
From now on, let $G={\rm SL}_2(k)$. Let 
$$X= \left( \begin{array}{cc}
         0 & 1 \\
         0 & 0 \end{array} \right),\ \ \ 
Y= \left( \begin{array}{cc}
         0 & 0 \\
         1 & 0 \end{array} \right),\ \ \ 
H= \left( \begin{array}{cc}
         1 & 0 \\
         0 & {-1} \end{array} \right)$$
be the standard basis in the simple complex Lie algebra $\mathfrak{g}_{\mathbb{C}}=\mathfrak{sl}_2(\mathbb{C})$. We define a subring 
$\mathcal{U}_{\mathbb{Z}}$ of the universal enveloping algebra $\mathcal{U}_{\mathbb{C}}$ of  $\mathfrak{g}_{\mathbb{C}}$  
generated by $X^{(m)}=X^m/m!$ and  $Y^{(m)}=Y^m/m!$ with $m \in \mathbb{Z}_{\geq 0}$. 
For  $z=H$ or $-H$ in $\mathcal{U}_{\mathbb{Z}}$, set 
$${z+c \choose m }= \dfrac{(z+c)(z+c-1) \cdots (z+c-m+1)}{m!}$$
for $c \in \mathbb{Z}$ and $m \in \mathbb{Z}_{\geq 0}$, which also lies in 
$\mathcal{U}_{\mathbb{Z}}$. Then the elements 
$$Y^{(m)} {H \choose n} X^{(m')}$$
with $m, m', n \in \mathbb{Z}_{\geq 0}$ form a $\mathbb{Z}$-basis of $\mathcal{U}_{\mathbb{Z}}$. The $k$-algebra 
$\mathcal{U}_{\mathbb{Z}} \otimes_{\mathbb{Z}} k$ is denoted by $\mathcal{U}$ and called 
the hyperalgebra of $G$, which is equipped with a structure of Hopf algebra over $k$. 
We  use the same notation for the images in $\mathcal{U}$ 
of the elements 
in $\mathcal{U}_{\mathbb{Z}}$. In the following propositions we shall give some 
well-known formulas, which are repeatedly used to carry out 
calculations in $\mathcal{U}$ (for example, see \cite[\S3]{gros12}, \cite[\S4]{gros-kaneda15}, 
\cite[\S5]{haboush80}, \cite[\S26]{humphreysbook}, \cite[I. ch. 7]{jantzenbook}).

\begin{Proposition}
For $m,n \in \mathbb{Z}_{\geq 0}$ and  
$s,t \in \mathbb{Z}$, the following holds in  $\mathcal{U}$: \\
{\rm (i)} $\displaystyle{X^{(m)} Y^{(n)} = \sum_{i=0}^{{\rm min}(m,n)} Y^{(n-i)} 
{H-m-n+2i \choose i} X^{(m-i)}}$,  \\ \ \ \ \ 
 $\displaystyle{Y^{(m)} X^{(n)} = \sum_{i=0}^{{\rm min}(m,n)} X^{(n-i)} 
{-H-m-n+2i \choose i} Y^{(m-i)}}$, \\
{\rm (ii)} $\displaystyle{{H +s \choose m} X^{(n)}= X^{(n)} {H+s +2n \choose m}}$, \  $\displaystyle{{H +s \choose m} Y^{(n)}= Y^{(n)} {H+s -2n \choose m}}$, \\
{\rm (iii)} $\displaystyle{X^{(m)} X^{(n)} = {m+n \choose n} X^{(m+n)}}$, \  
 $\displaystyle{Y^{(m)} Y^{(n)} = {m+n \choose n} Y^{(m+n)}}$, \\
{\rm (iv)} $\displaystyle{{H \choose m} {H \choose n}= \sum_{i=0}^{{\rm min}(m,n) }
{n+m-i \choose m} {m \choose i} {H \choose n+m-i}}$, \\
{\rm (v)} $\displaystyle{{H+s+t \choose m} = 
\sum_{i=0}^{m} {s \choose m-i} {H+t \choose i}}$.
\end{Proposition}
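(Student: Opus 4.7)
The plan is to establish each identity first in $\mathcal{U}_{\mathbb{Z}}$, viewed as a subring of the characteristic-zero domain $\mathcal{U}_{\mathbb{C}}$, and then deduce the statement in $\mathcal{U}$ by base change to $k$. Working in $\mathcal{U}_{\mathbb{C}}$ one can clear denominators freely, so each identity reduces to an equality of rational polynomials in $X$, $Y$, $H$. This is the standard route and it avoids any delicate characteristic-$p$ bookkeeping.

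Formulas (iii) and (ii) are essentially immediate. For (iii), expand $X^{(m)} X^{(n)} = X^{m+n}/(m!\,n!)$ and rewrite the scalar as $\binom{m+n}{n}/(m+n)!$; the $Y$-case is identical. For (ii), the relation $[H, X] = 2X$ yields $H X^{(n)} = X^{(n)}(H+2n)$; by an induction on the degree of a polynomial $p \in \mathbb{Z}[H]$ one gets $p(H)X^{(n)} = X^{(n)} p(H+2n)$, and applying this to $p(H) = \binom{H+s}{m}$ gives the first assertion. The $Y$-version is symmetric via $[H, Y] = -2Y$.

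The essential content is (i), the generalized commutation of $X^{(m)}$ with $Y^{(n)}$. I would proceed by induction on $m$. The base case $m = 1$ reduces, via a small auxiliary induction on $n$, to
\[
X\, Y^{(n)} \;=\; Y^{(n)} X \;+\; Y^{(n-1)}\bigl(H - n + 1\bigr),
\]
which is exactly the asserted formula at $m = 1$. For the inductive step, write $m\,X^{(m)} = X \cdot X^{(m-1)}$, expand $X^{(m-1)} Y^{(n)}$ by the induction hypothesis, and push the remaining $X$ past each resulting term $Y^{(n-i)}\binom{H-(m-1)-n+2i}{i}X^{(m-1-i)}$ using the $m=1$ case together with (ii) and (iii); the binomial identities needed to merge the two families of terms into the single sum with coefficient $\binom{H-m-n+2i}{i}$ constitute the main technical hurdle. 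The second half of (i) follows by the symmetric computation, or by applying the standard antiautomorphism of $\mathcal{U}_{\mathbb{Z}}$ that interchanges $X^{(m)}$ and $Y^{(m)}$ and sends $H$ to $-H$.

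Finally, (iv) and (v) live entirely in the commutative subring $\mathbb{Z}[H] \subset \mathcal{U}_{\mathbb{Z}}$, so they are polynomial identities in the single variable $H$. Identity (v) is the Chu--Vandermonde relation $\binom{x+y}{m} = \sum_{i=0}^{m}\binom{x}{m-i}\binom{y}{i}$, a universal identity in $\mathbb{Z}[x,y]$, specialized at $x = s$ and $y = H+t$. Identity (iv) follows by the classical combinatorial identity for products of binomial coefficients in one variable; equivalently, one expands the left side using the polynomial $\binom{H}{m}\binom{H}{n}$ and compares coefficients in the basis $\{\binom{H}{j}\}_{j\geq 0}$, obtaining an integer identity that can be verified by a direct subset-intersection count. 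The main obstacle in the whole Proposition is the inductive bookkeeping for (i); once that is in place, the remaining parts follow by routine polynomial manipulation.
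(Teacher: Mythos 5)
The paper does not actually prove Proposition 2.1: it records these formulas as ``well-known'' and refers the reader to the cited sources (Gros, Gros--Kaneda, Haboush, Humphreys \S 26, Jantzen I.~ch.~7). So there is no in-paper argument to compare against; what you have written is essentially the textbook derivation from those references, and it is sound. Parts (ii), (iii), (v) are exactly as easy as you say, and your reading of (iv) as a product-of-binomials expansion verifiable by a subset-intersection count (or by expanding in the basis $\{\binom{H}{j}\}$) is correct. For (i), your plan --- reduce to $\mathcal{U}_{\mathbb{Z}}\subset\mathcal{U}_{\mathbb{C}}$, prove the base case $XY^{(n)}=Y^{(n)}X+Y^{(n-1)}(H-n+1)$, then induct on $m$ via $mX^{(m)}=X\cdot X^{(m-1)}$ --- is the standard route and works. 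The ``main technical hurdle'' you flag is smaller than you suggest: after pushing $X$ through $Y^{(n-i)}$ and using (ii) to move it past $\binom{H-(m-1)-n+2i}{i}$, the coefficient matching reduces, with $z=H-m-n+2i$, to
\[
(m-i)\binom{z-1}{i}+(z+m-i)\binom{z-1}{i-1}=m\binom{z}{i},
\]
which follows from Pascal's rule $\binom{z}{i}=\binom{z-1}{i}+\binom{z-1}{i-1}$ together with the elementary identity $i\binom{z-1}{i}=(z-i)\binom{z-1}{i-1}$. Filling that in closes the only loose end; the rest of the sketch, including the use of the antiautomorphism $X^{(m)}\leftrightarrow Y^{(m)}$, $H\mapsto -H$ for the second half of (i), is fine.
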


\begin{Proposition}
Let $m, m' , n, n'  \in \mathbb{Z}$ with $n' \geq 0$ and $0 \leq m,n \leq p-1$. Then 
$${m+m'p \choose n+n'p} \equiv {m \choose n} {m' \choose n'}\ ({\rm mod}\ p).$$
\end{Proposition}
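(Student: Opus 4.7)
The congruence is essentially Lucas' theorem extended to (possibly negative) integer $m'$, and my plan is to prove it uniformly by comparing coefficients of formal power series over $\mathbb{F}_p$. The main tool is the Frobenius identity $(1+x)^p = 1 + x^p$ in $\mathbb{F}_p[x]$, which holds because $\binom{p}{i} \equiv 0 \pmod p$ for $0 < i < p$. Since both $1+x$ and $1+x^p$ are units in $\mathbb{F}_p[[x]]$ (they have constant term $1$), raising the Frobenius identity to any integer power gives $(1+x)^{m'p} = (1+x^p)^{m'}$, and multiplying through by the polynomial $(1+x)^m$ yields the key identity
\[
(1+x)^{m+m'p} \;=\; (1+x)^{m}\,(1+x^p)^{m'}
\]
in $\mathbb{F}_p[[x]]$.

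I would then compare the coefficient of $x^{n+n'p}$ on both sides. On the left, the formal binomial series
\[
(1+x)^{N} \;=\; \sum_{k \ge 0} \binom{N}{k} x^{k} \qquad (N \in \mathbb{Z}),
\]
valid in $\mathbb{Z}[[x]]$ via the falling-factorial definition $\binom{N}{k} = N(N-1)\cdots(N-k+1)/k!$, reduces modulo $p$ to give $\binom{m+m'p}{n+n'p} \bmod p$. On the right, since $(1+x)^{m}$ is a polynomial of degree $m \le p-1$ and $(1+x^p)^{m'} = \sum_{j \ge 0} \binom{m'}{j} x^{jp}$, the coefficient of $x^{n+n'p}$ in the product is $\binom{m}{n}\binom{m'}{n'}$: the only pair $(i,j)$ with $i+jp = n+n'p$, $0 \le i \le m \le p-1$, $j \ge 0$ is $(n,n')$, forced by $0 \le n \le p-1$ through the uniqueness of base-$p$ digits. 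Equating the two coefficients gives the congruence.

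The main obstacle is the case $m' < 0$, where $m+m'p$ is possibly negative and $(1+x)^{m+m'p}$ must be interpreted through the formal binomial series above rather than as a polynomial. I would handle this by first checking in $\mathbb{Z}[[x]]$ that $(1+x)^N$ defined via the series satisfies the exponent-additivity law $(1+x)^{N_1}(1+x)^{N_2} = (1+x)^{N_1+N_2}$ and agrees with the polynomial expansion when $N \ge 0$; then the identity $(1+x)^{m'p} = (1+x^p)^{m'}$ for negative $m'$ follows by inverting the positive case within the units of $\mathbb{F}_p[[x]]$. Apart from this bookkeeping, the argument is purely a coefficient comparison, so no combinatorial machinery beyond $(1+x)^p \equiv 1+x^p \pmod p$ is needed.
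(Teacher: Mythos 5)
The paper states Proposition 2.2 without any proof, presenting it as one of several ``well-known formulas'' and pointing the reader to standard references, so there is no proof of record to compare against; what matters is whether your argument is correct, and it is. The generating-function route you take --- working in $\mathbb{F}_p[[x]]$, using $(1+x)^p = 1+x^p$, extending via the formal binomial series to arbitrary integer exponents (the additivity $(1+x)^{N_1}(1+x)^{N_2}=(1+x)^{N_1+N_2}$ is Vandermonde's identity, and negative exponents are handled because both $1+x$ and $1+x^p$ are units of $\mathbb{F}_p[[x]]$), and then matching the coefficient of $x^{n+n'p}$ --- is exactly where the hypotheses $0\le m,n\le p-1$ and $n'\ge 0$ are needed, and you use them correctly. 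One small wording caveat: in the coefficient extraction you say the only pair $(i,j)$ with $i+jp=n+n'p$, $0\le i\le m$, $j\ge 0$ is $(n,n')$; when $n>m$ there is no such pair, and the clean way to phrase it is that the unique $(i,j)$ with $i+jp=n+n'p$, $0\le i\le p-1$, $j\ge 0$ is $(n,n')$ by base-$p$ uniqueness, and for any other $j$ the resulting $i$ exceeds $p-1\ge m$, so $\binom{m}{i}=0$; in all cases the coefficient equals $\binom{m}{n}\binom{m'}{n'}$. This is cosmetic, and the proof as a whole is a complete and standard justification of the congruence, including the case $m'<0$ that a purely combinatorial Lucas argument would not directly cover.
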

\ \\ 

Let 
$${\rm Fr}:G \longrightarrow G,\ \ \ 
\left( \begin{array}{cc}
         a & b \\
         c & d \end{array} \right)
\mapsto 
\left( \begin{array}{cc}
         {a^p} & {b^p} \\
         {c^p} & {d^p} \end{array} \right)$$
be the geometric Frobenius map. This map induces the $k$-algebra endomorphism 
${\rm Fr}: \mathcal{U} \rightarrow \mathcal{U}$ which is defined by 
$${\rm Fr}(X^{(m)}) = 
\left\{ \begin{array}{cl}
         {X^{(m/p)}} & {\mbox{if $p \mid m$,}} \\
         0 & {\mbox{if $p \nmid m$}} \end{array} \right. 
\ \ \ \ \ \ \mbox{and  \ }
{\rm Fr}(Y^{(m)}) = 
\left\{ \begin{array}{cl}
         {Y^{(m/p)}} & {\mbox{if $p \mid m$,}} \\
         0 & {\mbox{if $p \nmid m$}} \end{array} \right. .$$
Then we also have 
$${\rm Fr} \bigg( {H \choose m} \bigg) = 
\left\{ \begin{array}{cl}
         {{H \choose m/p}} & {\mbox{if $p \mid m$,}} \\
         0 & {\mbox{if $p \nmid m$}} \end{array} \right. .$$

Let $\mathcal{U}^+$ (resp. $\mathcal{U}^-$) be the subalgebra of $\mathcal{U}$ 
generated by $X^{(p^i)}$ (resp.  $Y^{(p^i)}$) with $i \in \mathbb{Z}_{\geq 0}$ 
, and let $\mathcal{U}^0$ be 
the subalgebra of $\mathcal{U}$ generated by ${H \choose p^i}$ with 
$i \in \mathbb{Z}_{\geq 0}$. The elements 
$Y^{(m)} {H \choose n} X^{(m')}$ 
with $m, m', n \in \mathbb{Z}_{\geq 0}$ form a $k$-basis of $\mathcal{U}$ and 
we have a triangular decomposition 
$\mathcal{U} = \mathcal{U}^- \mathcal{U}^0 \mathcal{U}^+$. We say that an  element 
$z \in \mathcal{U}$ has degree $d$ if it is a $k$-linear combination of  the elements 
$Y^{(m)} {H \choose n} X^{(m')}$ 
with $m, m', n \in \mathbb{Z}_{\geq 0}$ and $m'-m =d$.
For a positive integer 
$r \in \mathbb{Z}_{> 0}$, let $\mathcal{U}_r$ be the subalgebra of $\mathcal{U}$ generated by 
$X^{(p^i)}$ and $Y^{(p^i)}$ with $0 \leq i  \leq r-1$. 
This is a finite-dimensional algebra of dimension $p^{3r}$ which has 
$Y^{(m)} {H \choose n} X^{(m')}$ 
with $0 \leq m, m', n \leq p^r-1$ as a basis, 
and it can be identified with the hyperalgebra of the $r$-th Frobenius kernel 
$G_r = {\rm Ker} ({\rm Fr}^r)$ of $G$. 
Let $\mathcal{U}_r^+$ (resp. $\mathcal{U}_r^-$) be the subalgebra of $\mathcal{U}$ 
generated by $X^{(p^i)}$ (resp.  $Y^{(p^i)}$) with $0 \leq i \leq r-1$ 
, and $\mathcal{U}_r^0$ 
the subalgebra of $\mathcal{U}$ generated by ${H \choose p^i}$ with 
$0 \leq i  \leq r-1$. Then we define four subalgebras 
$\mathcal{U}^{\geq 0}, \mathcal{U}^{\leq 0}, \mathcal{U}_r^{\geq 0}$ and 
$\mathcal{U}_r^{\leq 0}$ as
$$\mathcal{U}^{\geq 0}= \mathcal{U}^0\mathcal{U}^+, \ \ \ 
\mathcal{U}^{\leq 0}= \mathcal{U}^-\mathcal{U}^0,\ \ \ 
\mathcal{U}_r^{\geq 0}= \mathcal{U}_r^0 \mathcal{U}_r^+\ \ \mbox{and}\ \  
\mathcal{U}_r^{\leq 0}= \mathcal{U}_r^- \mathcal{U}_r^0.
$$
In dealing with the subalgebra $\mathcal{U}_r$ it is often convenient to   consider the larger  subalgebras 
$\widetilde{\mathcal{U}}_r= \mathcal{U}_r^- \mathcal{U}^0 \mathcal{U}_r^+$ generated by 
$X^{(p^i)}$, $Y^{(p^i)}$ and ${H \choose p^l}$ with $0 \leq i \leq r-1$ and 
$l \in \mathbb{Z}_{\geq 0}$, and 
$\mathcal{U}_{r,r'}= \mathcal{U}_r^- \mathcal{U}_{r'}^0 \mathcal{U}_r^+$ generated by 
$X^{(p^i)}$, $Y^{(p^i)}$ and ${H \choose p^l}$ with $0 \leq i \leq r-1$ and 
$0 \leq l \leq r'-1$ for $r' > r$. Then the elements 
$Y^{(m)} {H \choose n} X^{(m')}$ 
with $0 \leq m, m' \leq p^r-1$ and  $n \in \mathbb{Z}_{\geq 0}$ (resp. $0 \leq n \leq p^{r'}-1$) form a $k$-basis of $\widetilde{\mathcal{U}}_r$ (resp. $\mathcal{U}_{r,r'}$).

Consider the $k$-linear map ${\rm Fr}': \mathcal{U} \rightarrow \mathcal{U}$ defined 
by 
$$Y^{(m)} {H \choose n} X^{(m')} \mapsto 
Y^{(mp)} {H \choose np} X^{(m'p)}.$$
Clearly we have ${\rm Fr} \circ {\rm Fr}' = id_{\mathcal{U}}$. 
Let ${\rm Fr}'^+$ (resp. ${\rm Fr}'^-, {\rm Fr}'^0, {\rm Fr}'^{\geq 0}$, ${\rm Fr}'^{\leq 0}$) be the restriction of ${\rm Fr}'$ to $\mathcal{U}^+$ 
(resp. $\mathcal{U}^-, \mathcal{U}^0, \mathcal{U}^{\geq 0}$, $\mathcal{U}^{\leq 0}$).   
These five restriction maps are   homomorphisms of $k$-algebras, whereas  
${\rm Fr}'$ is not (see \cite[1.2]{gros-kaneda11}). 

In this paper, the symbol $\otimes$ is assumed to  mean a tensor product over $k$. 

\begin{Proposition}
For  $n,n' \in \mathbb{Z}_{> 0}$ with $n' > n$, the multiplication map 
$$\mathcal{U}_n \otimes {\rm Fr}'^n(\mathcal{U}_{n'-n}) \rightarrow \mathcal{U}_{n'}$$ 
is a $k$-linear isomorphism. 
\end{Proposition}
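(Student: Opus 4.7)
\emph{Proof proposal.}

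The plan is to match dimensions and then expand a natural spanning set of $\mathcal{U}_n \cdot \mathrm{Fr}'^n(\mathcal{U}_{n'-n})$ in the PBW basis of $\mathcal{U}_{n'}$, from which a unitriangular change of basis can be read off.

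First I would count dimensions. Since $\mathrm{Fr}^n \circ \mathrm{Fr}'^n = \mathrm{id}_\mathcal{U}$, the map $\mathrm{Fr}'^n$ is injective, so $\dim_k \mathrm{Fr}'^n(\mathcal{U}_{n'-n}) = p^{3(n'-n)}$ and both $\mathcal{U}_n \otimes \mathrm{Fr}'^n(\mathcal{U}_{n'-n})$ and $\mathcal{U}_{n'}$ have dimension $p^{3n'}$. Writing each $0 \le m, \ell, m' < p^{n'}$ uniquely as $m = a + cp^n$, $\ell = b + dp^n$, $m' = a' + c'p^n$ with $0 \le a, b, a' < p^n$ and $0 \le c, d, c' < p^{n'-n}$ puts the natural generating set
$$ w_{a,b,a';c,d,c'} := Y^{(a)} {H \choose b} X^{(a')} \cdot Y^{(cp^n)} {H \choose dp^n} X^{(c'p^n)} $$
in bijection with the PBW basis $\{Y^{(m)} {H \choose \ell} X^{(m')}\}$ of $\mathcal{U}_{n'}$ via $(a,b,a';c,d,c') \leftrightarrow (m,\ell,m')$, so it suffices to show the $w$'s are linearly independent.

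Next I would identify the leading term of each $w_{a,b,a';c,d,c'}$ under the lexicographic order on $(m', m, \ell)$, using Proposition~2.1 combined with Proposition~2.2. The middle product $X^{(a')} Y^{(cp^n)}$ expands by Proposition~2.1(i) to $Y^{(cp^n)} X^{(a')}$ plus summands whose $X$- and $Y$-indices are both reduced by the summation index. Next, ${H \choose b}$ with $b < p^n$ commutes \emph{exactly} (in $\mathcal{U}$) with $Y^{(cp^n)}$ and $X^{(c'p^n)}$: applying Proposition~2.1(ii) produces the shifts ${H - 2cp^n \choose b}$ and ${H + 2c'p^n \choose b}$, whose (v)-expansions reduce to ${H \choose b}$ alone since every non-leading summand is killed mod $p$ by the Lucas identity of Proposition~2.2. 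Proposition~2.1(iii) together with ${a + cp^n \choose a} \equiv {a' + c'p^n \choose a'} \equiv 1 \pmod p$ collapses $Y^{(a)} Y^{(cp^n)} = Y^{(m)}$ and $X^{(a')} X^{(c'p^n)} = X^{(m')}$ on the nose. Moving $X^{(a')}$ past ${H \choose dp^n}$ by Proposition~2.1(ii) yields $X^{(a')} {H \choose dp^n} = {H - 2a' \choose dp^n} X^{(a')}$, which by (v) equals ${H \choose dp^n} X^{(a')} + \sum_{i < dp^n}(\cdots) {H \choose i} X^{(a')}$. Finally Proposition~2.1(iv) gives ${H \choose b}{H \choose dp^n} = {H \choose \ell} + \sum_{i \ge 1}(\cdots) {H \choose \ell - i}$ with leading coefficient ${b + dp^n \choose b} \equiv 1 \pmod p$. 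Chaining these,
$$ w_{a,b,a';c,d,c'} = Y^{(m)} {H \choose \ell} X^{(m')} + (\text{strictly lower terms in the lex order}), $$
so the transition matrix is unitriangular and the $w$'s form a basis of $\mathcal{U}_{n'}$, proving the multiplication map is a $k$-linear isomorphism.

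The delicate part will be the monomial-order bookkeeping: corrections from the $X^{(a')} Y^{(cp^n)}$ swap reduce both $m'$ and $m$ by the same positive amount, whereas those produced by Proposition~2.1(ii), (iv), (v) preserve $(m', m)$ and strictly reduce $\ell$, and the lex order on $(m', m, \ell)$ must dominate every correction simultaneously. The structural fact making this work is the bound $a, b, a' < p^n$: every correction stays within the lower base-$p^n$ block and never perturbs the upper digits $(c, d, c')$ of $(m, \ell, m')$.
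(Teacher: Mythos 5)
Your proof is correct and follows essentially the same strategy as the paper: match dimensions $p^{3n'}$, then observe that the image of each natural generating element equals the corresponding PBW basis element of $\mathcal{U}_{n'}$ plus strictly lower terms, so the transition matrix is unitriangular. The only cosmetic difference is the choice of ordering — you use lexicographic order on $(m',m,\ell)$ (and take care to note which commutations are exact because of the $p^n$-divisibility), while the paper orders by the single total degree $m+n+m'$ and extracts injectivity from the highest-degree term; both give the same triangularity.
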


\begin{proof}
The linearity is clear. Since both the $k$-vector spaces 
$\mathcal{U}_n \otimes {\rm Fr}'^n(\mathcal{U}_{n'-n})$ 
and $\mathcal{U}_{n'}$ have dimension $p^{3n'}$, it is enough to show that the map is injective. Let $z$ be a nonzero element of 
$\mathcal{U}_n \otimes {\rm Fr}'^n(\mathcal{U}_{n'-n})$. Note that 
$$Y^{(m_1)} {H \choose n_1} X^{(m_1')} \otimes 
Y^{(m_2p^n)} {H \choose n_2p^n} X^{(m_2'p^n)}$$
with $0 \leq m_1,n_1,m_1' \leq p^n-1$ and $0 \leq m_2,n_2,m_2' \leq p^{n'-n}-1$ form a 
$k$-basis of $\mathcal{U}_n \otimes {\rm Fr}'^n(\mathcal{U}_{n'-n})$. Moreover, by 
using the formulas in Propositions 2.1 and 2.2 we easily see that such a basis vector is  
mapped to  
$$Y^{(m_1)} {H \choose n_1} X^{(m_1')} 
Y^{(m_2p^n)} {H \choose n_2p^n} X^{(m_2'p^n)} 
= Y^{(m_1+m_2p^n)}{H \choose n_1+n_2p^n} X^{(m_1'+m_2'p^n)} + u,$$
where $u$ is a $k$-linear combination of some basis vectors  of the form 
$Y^{(m_3)} {H \choose n_3} X^{(m_3')} $
with $0 \leq m_3,n_3,m_3' \leq p^{n'}-1$ and 
$$m_3+n_3+m_3' < m_1+n_1+m_1'+(m_2+n_2+m_2')p^n.$$ Therefore, if we take a $6$-tuple 
$(m_1,n_1,m_1',m_2,n_2,m_2')$ where $$m_1+n_1+m_1'+(m_2+n_2+m_2')p^n$$ is 
the largest integer with the coefficient of 
 $Y^{(m_1)} {H \choose n_1} X^{(m_1')} \otimes 
Y^{(m_2p^n)} {H \choose n_2p^n} X^{(m_2'p^n)}$ in the expression of $z$ as a $k$-linear combination of the basis vectors of  
$\mathcal{U}_n \otimes {\rm Fr}'^n(\mathcal{U}_{n'-n})$ being nonzero, 
then the coefficient of  
$Y^{(m_1+m_2p^n)} {H \choose n_1+n_2p^n} X^{(m_1'+m_2'p^n)}$ in the expression of 
the image of $z$ as a $k$-linear combination of the basis vectors of 
$\mathcal{U}_{n'}$ is also nonzero. This shows that the multiplication map is injective, and 
the proposition follows. 
\end{proof}
\noindent {\bf Remark.} These multiplication maps $\mathcal{U}_n \otimes {\rm Fr}'^n(\mathcal{U}_{n'-n}) \rightarrow \mathcal{U}_{n'}$ ($n'>n>0$) induce the multiplication maps 
$\mathcal{U}_n \otimes {\rm Fr}'^n(\mathcal{U}) \rightarrow \mathcal{U}$ and 
$\bigotimes_{i \geq 0} {\rm Fr}'^i(\mathcal{U}_1) \rightarrow \mathcal{U}$ (for the natural 
ordering of $\mathbb{Z}_{\geq 0}$), all of which are 
$k$-linear isomorphisms. \\ \\

For later use, let  $\mathcal{A}$ be the subalgebra of $\mathcal{U}$ which is generated by 
$\mathcal{U}^0$ and the elements $Y^{(p^i)}X^{(p^i)}$ with $i \in \mathbb{Z}_{\geq 0}$. 

\begin{Proposition} The following holds. \\ \\
{\rm (i)} $\mathcal{A}$ is commutative. \\ \\
{\rm (ii)} $\mathcal{A}$ is the centralizer of $\mathcal{U}^0$ in $\mathcal{U}$, consisting 
of all the elements of degree $0$ in $\mathcal{U}$. \\ \\
{\rm (iii)} $\mathcal{A}$ is free over $\mathcal{U}^0$ of basis $X^{(m)} Y^{(m)}$, 
$m \in \mathbb{Z}_{\geq 0}$, and also of basis $Y^{(m)}X^{(m)}$, 
$m \in \mathbb{Z}_{\geq 0}$. 
\end{Proposition}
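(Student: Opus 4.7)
My plan is to unify all three parts via one explicit product formula. Applying Proposition~2.1(i) to the middle $X^{(m)}Y^{(n)}$ and Proposition~2.1(iii) to contract the resulting $Y$'s and $X$'s gives
\begin{equation*}
Y^{(m)}X^{(m)}\cdot Y^{(n)}X^{(n)} = \sum_{i=0}^{\min(m,n)}\binom{m+n-i}{n-i}\binom{m+n-i}{m-i}\,Y^{(m+n-i)}\binom{H-m-n+2i}{i}X^{(m+n-i)}.
\end{equation*}
The coefficients are symmetric in $(m,n)$ via $\binom{m+n-i}{n-i}=\binom{m+n-i}{m}$ and $\binom{m+n-i}{m-i}=\binom{m+n-i}{n}$, so all $Y^{(m)}X^{(m)}$ pairwise commute. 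Combined with the commutativity of $\mathcal{U}^{0}$ and the identity $\binom{H}{l}\cdot Y^{(m)}X^{(m)}=Y^{(m)}\binom{H-2m}{l}X^{(m)}=Y^{(m)}X^{(m)}\cdot\binom{H}{l}$ (Proposition~2.1(ii)), this shows that the defining generators $\mathcal{U}^{0}\cup\{Y^{(p^{i})}X^{(p^{i})}\}_{i\ge 0}$ of $\mathcal{A}$ pairwise commute, proving (i).

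For (ii), the same Proposition~2.1(ii) cancellation places every basis vector $Y^{(m)}\binom{H}{n}X^{(m)}$ of the degree-$0$ subspace $\mathcal{U}_{0}$ inside $\mathrm{Cent}_{\mathcal{U}}(\mathcal{U}^{0})$. Conversely, if $u\in\mathrm{Cent}_{\mathcal{U}}(\mathcal{U}^{0})$ has homogeneous decomposition $u=\sum_{d}u_{d}$, each component $u_{d}=\sum_{m}Y^{(m)}h_{m}X^{(m+d)}$ still centralizes, and $[\binom{H}{l},u_{d}]=0$ reduces via Proposition~2.1(ii) to $h_{m}\bigl(\binom{H-2m}{l}-\binom{H-2m-2d}{l}\bigr)=0$ in $\mathcal{U}^{0}$ for all $l,m$. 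The embedding $\mathcal{U}^{0}=\bigoplus_{n}k\binom{H}{n}\hookrightarrow\mathrm{Maps}(\mathbb{Z},k)$, $h\mapsto(\lambda\mapsto h(\lambda))$ (injective by the standard M\"obius-inversion argument on $h(\lambda)=\sum c_{n}\binom{\lambda}{n}$), lets me pick $\lambda_{0}\in\mathbb{Z}$ with $h_{m}(\lambda_{0})\neq 0$ whenever $h_{m}\neq 0$; evaluating there gives $(1+x)^{\lambda_{0}-2m}=(1+x)^{\lambda_{0}-2m-2d}$ in $k[[x]]$, hence $(1+x)^{2d}=1$, which forces $d=0$ because $(1+x)^{p^{r}}=1+x^{p^{r}}\neq 1$ shows $1+x$ has infinite order in $k[[x]]^{\times}$. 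This proves $\mathrm{Cent}_{\mathcal{U}}(\mathcal{U}^{0})=\mathcal{U}_{0}$. The inclusion $\mathcal{A}\subseteq\mathcal{U}_{0}$ is immediate; the reverse reduces via $Y^{(m)}\binom{H}{n}X^{(m)}=\binom{H+2m}{n}Y^{(m)}X^{(m)}$ (Proposition~2.1(ii)) to showing each $Y^{(m)}X^{(m)}\in\mathcal{A}$, which I prove by induction on $m$ using the displayed product formula: write $m$ in base $p$ and either split a single-digit $m=kp^{j}$ as $Y^{(p^{j})}X^{(p^{j})}\cdot Y^{((k-1)p^{j})}X^{((k-1)p^{j})}$ (leading coefficient $k^{2}\neq 0$ for $1\le k\le p-1$) or split multi-digit $m=m'+m''$ with disjoint $p$-adic supports (leading coefficient $\binom{m}{m'}^{2}=1$ by Proposition~2.2), absorbing the strictly lower-$m$ corrections by the inductive hypothesis.

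Part (iii) then follows by bookkeeping. The identity $Y^{(m)}\binom{H}{n}X^{(m)}=\binom{H+2m}{n}Y^{(m)}X^{(m)}$ shows $\{Y^{(m)}X^{(m)}\}_{m\ge 0}$ generates $\mathcal{A}=\mathcal{U}_{0}$ over $\mathcal{U}^{0}$; linear independence holds because any relation $\sum_{m}h_{m}Y^{(m)}X^{(m)}=0$ rewrites as $\sum_{m}Y^{(m)}\tau_{-2m}(h_{m})X^{(m)}=0$, where $\tau_{s}\colon H\mapsto H+s$ is an automorphism of $\mathcal{U}^{0}$ (Proposition~2.1(v)), and the $k$-basis $\{Y^{(m)}\binom{H}{n}X^{(m)}\}_{m,n}$ of $\mathcal{U}$ forces each $h_{m}=0$. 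The dual basis $\{X^{(m)}Y^{(m)}\}$ follows from the unipotent triangular identity $X^{(m)}Y^{(m)}=Y^{(m)}X^{(m)}+(\text{strictly smaller }m')$ (Proposition~2.1(i)), whose transition matrix over $\mathcal{U}^{0}$ is invertible. The main obstacle I expect is the nonzero-degree centralizer step in (ii): the rest is essentially bookkeeping with the straightening formulas, but that step requires the genuine rigidity input $\mathcal{U}^{0}\hookrightarrow\mathrm{Maps}(\mathbb{Z},k)$ combined with the infinite order of $1+x$ in $k[[x]]^{\times}$, in order to circumvent the zero divisors present in $\mathcal{U}^{0}$.
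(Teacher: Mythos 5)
Your proof is correct, and it differs from the paper's in one genuine way: the step showing that the centralizer of $\mathcal{U}^{0}$ contains no components of nonzero degree. The paper, having reduced to the relation $\bigl(\binom{H+s-2m}{t}-\binom{H+s-2m'}{t}\bigr)z_{(m,m')}=0$ for all $s,t$, directly manufactures a nonzero scalar: writing $2(m'-m)=p^{c}d$ with $p\nmid d$, it chooses $s=2m'$, $t=p^{c}$ and expands with Proposition~2.1(v) and Lucas to get $\binom{H+p^{c}d}{p^{c}}-\binom{H}{p^{c}}=d\neq 0$, whence $z_{(m,m')}=0$. You instead invoke the faithful evaluation $\mathcal{U}^{0}\hookrightarrow\mathrm{Maps}(\mathbb{Z},k)$ and the generating-function identity $\sum_{l}\binom{n}{l}x^{l}=(1+x)^{n}$, reducing the relation to $(1+x)^{2d}=1$ in $k[[x]]$, which forces $d=0$ by the infinite order of $1+x$. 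Both are valid; the paper's is a single hands-on computation that needs no auxiliary structure, while yours is more conceptual and packages the ``$p$-adic valuation of $2(m'-m)$'' bookkeeping into the power-series ring once and for all. You also supply a full induction (digit-by-digit $p$-adic splitting of $m$, with leading coefficient $k^{2}$ or $\binom{m}{m'}^{2}=1$ by Proposition~2.2) for the inclusion $\{\deg 0\}\subseteq\mathcal{A}$, a step the paper dispatches with ``it is easy to see'' and no detail. Parts (i) and (iii) are essentially the paper's argument, resting on the same symmetric product expansion of $Y^{(m)}X^{(m)}Y^{(n)}X^{(n)}$ and the same PBW-type basis of $\mathcal{U}$.
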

\begin{proof}
(i) Let $s,t \in \mathbb{Z}_{\geq 0}$.  Since  
\begin{eqnarray*}
{H \choose p^s} Y^{(p^t)}X^{(p^t)} 
& = & Y^{(p^t)}{H-2p^t \choose p^s}X^{(p^t)} \\
& = & Y^{(p^t)} \sum_{i=0}^{p^s} {-2p^t \choose i } {H \choose p^s-i} X^{(p^t)} \\
& = & Y^{(p^t)}X^{(p^t)} \sum_{i=0}^{p^s} {-2p^t \choose i } {H+2p^t \choose p^s-i} \\
& = & Y^{(p^t)}X^{(p^t)} {H \choose p^s},
\end{eqnarray*} 
each $Y^{(p^t)}X^{(p^t)} $ commutes with all elements of $\mathcal{U}^0$. 
On the other hand, we have \\
\begin{eqnarray*}
\lefteqn{Y^{(p^s)}X^{(p^s)}Y^{(p^t)}X^{(p^t)}} \\
& = & Y^{(p^s)} \sum_{i=0}^{{\rm min}(p^s, p^t)} Y^{(p^t-i)} 
{H-p^s-p^t+2i \choose i} X^{(p^s-i)} X^{(p^t)} \\
& = & \sum_{i=0}^{{\rm min}(p^s, p^t)} {p^s+p^t-i \choose p^s} {p^s+p^t-i \choose p^t} 
Y^{(p^s+p^t-i)} 
{H-p^s-p^t+2i \choose i} X^{(p^s+p^t-i)},  
\end{eqnarray*}
which is symmetric with respect to  $p^s$ and $p^t$. Therefore, 
$Y^{(p^t)}X^{(p^t)}$ also commutes with  $Y^{(p^s)}X^{(p^s)}$. 

(ii) It is easy to see that $\mathcal{A}$ consists of all the elements of degree $0$ in $\mathcal{U}$. Let $C_{\mathcal{U}}(\mathcal{U}^0)$ be the centralizer 
of $\mathcal{U}^0$ in $\mathcal{U}$. Clearly 
$\mathcal{A} \subseteq C_{\mathcal{U}}(\mathcal{U}^0)$, and we have to show that 
$\mathcal{A} \supseteq C_{\mathcal{U}}(\mathcal{U}^0)$. Let $z$ be an element of 
$C_{\mathcal{U}}(\mathcal{U}^0)$. Then $z$ can be written uniquely as 
$$z = \sum_{m, m' \in \mathbb{Z}_{\geq 0}} Y^{(m)} z_{(m, m')} X^{(m')}$$
with all $ z_{(m, m')} \in \mathcal{U}^0$, almost all equal to $0$. To show that 
$z \in \mathcal{A}$, it is enough to show that $z_{(m,m')} =0$ for each pair 
$(m,m')$ with $m \neq m'$. For $s \in \mathbb{Z}$ and $t \in \mathbb{Z}_{\geq 0}$, 
we have 
$${H+s \choose t} z = 
\sum_{m, m' \in \mathbb{Z}_{\geq 0}} Y^{(m)} {H+s-2m \choose t} z_{(m, m')} X^{(m')},$$
$$z {H+s \choose t} =  
\sum_{m, m' \in \mathbb{Z}_{\geq 0}} Y^{(m)} {H+s-2m' \choose t} z_{(m, m')} X^{(m')}.$$
Since ${H+s \choose t} z= z {H+s \choose t}$, for each pair $(m,m')$, we must have 
$$\bigg(  {H+s-2m \choose t} - {H+s-2m' \choose t} \bigg)  z_{(m, m')} =0$$
for any $s \in \mathbb{Z}$ and $t \in \mathbb{Z}_{\geq 0}$. Take unique integers $c \in \mathbb{Z}_{\geq 0}$ 
and $d \in \mathbb{Z}$ such that 
$2(m'-m)= p^c d$ and $p$ does not divide $d$. Then if we take $s= 2m'$ and $t=p^c$, we have 
\begin{eqnarray*}
 {H+s-2m \choose t} - {H+s-2m' \choose t}
& = &  {H+p^c d \choose p^c} - {H \choose p^c} \\
& = & \sum_{i=0}^{p^c} {p^c d \choose i} {H \choose p^c-i} - {H \choose p^c} \\
& = &  {H \choose p^c}+d- {H \choose p^c} =d. 
\end{eqnarray*}  
Since $d \neq 0 $ in $\mathbb{F}_p$, we must have $z_{(m,m')}=0$ and hence 
$z \in \mathcal{A}$. 

(iii)  It is easy from the fact that $X^{(m)} {H \choose n} Y^{(m)}$ with   
$m,n \in \mathbb{Z}_{\geq 0}$ as well as $Y^{(m)} {H \choose n} X^{(m)}$ with   
$m,n \in \mathbb{Z}_{\geq 0}$ form a $k$-basis of $\mathcal{A}$. 
\end{proof} 
\

For $r,r' \in \mathbb{Z}_{>0}$ with $r'>r$, we write $\mathcal{A}_r$ 
(resp. $\widetilde{\mathcal{A}}_r$, $\mathcal{A}_{r,r'}$) for the 
subalgebra of $\mathcal{U}_r$ (resp.  $\widetilde{\mathcal{U}}_r$, $\mathcal{U}_{r,r'}$) generated by $\mathcal{U}_r^0$ (resp. $\mathcal{U}^0$, $\mathcal{U}_{r'}^0$) and the elements 
$Y^{(p^i)}X^{(p^i)}$ with $0 \leq i \leq r-1$. Clearly we have  
$\mathcal{A}_r = \mathcal{A} \cap \mathcal{U}_r$, 
$\widetilde{\mathcal{A}}_r = \mathcal{A} \cap \widetilde{\mathcal{U}}_r$ and 
$\mathcal{A}_{r,r'} = \mathcal{A} \cap \mathcal{U}_{r,r'}$. We also see that 
$\mathcal{A}_r$ (resp. $\widetilde{\mathcal{A}}_r$, $\mathcal{A}_{r,r'}$) is 
the centralizer of $\mathcal{U}_r^0$ (resp. $\mathcal{U}^0$, $\mathcal{U}_{r'}^0$) in $\mathcal{U}_r$ (resp.  $\widetilde{\mathcal{U}}_r$, $\mathcal{U}_{r,r'}$) and is free over $\mathcal{U}_r^0$  (resp. $\mathcal{U}^0$, $\mathcal{U}_{r'}^0$) of basis $X^{(m)} Y^{(m)}$, 
$0 \leq m \leq p^r-1$, and also of basis $Y^{(m)}X^{(m)}$, 
$0 \leq m \leq p^r-1$. 

The following proposition will be  used  in Section 5. 

\begin{Proposition}
Let $n \in \mathbb{Z}_{\geq 0}$. Then $X^{(np)}$ and 
$Y^{(np)}$ commute with all elements in $\mathcal{A}_1$. 
\end{Proposition}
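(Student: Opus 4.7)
\emph{Plan.} By Proposition 2.4(iii) applied inside $\mathcal{U}_1$, the algebra $\mathcal{A}_1$ is free over $\mathcal{U}_1^0$ on basis $\{Y^{(m)}X^{(m)} : 0 \le m \le p-1\}$, hence has $k$-basis $\{\binom{H}{s} Y^{(m)} X^{(m)} : 0 \le s, m \le p-1\}$. Since commutation with $X^{(np)}$ is a $k$-linear property and since $\binom{H}{s}$ and $Y^{(m)}X^{(m)}$ already commute inside $\mathcal{A}_1$, it suffices to check that $X^{(np)}$ commutes separately with each $\binom{H}{s}$ and with each $Y^{(m)}X^{(m)}$ in the stated ranges. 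I carry out the $X^{(np)}$ case in detail; the $Y^{(np)}$ case runs identically using the second halves of Proposition 2.1(i),(ii) together with the alternative basis $X^{(m)}Y^{(m)}$ of $\mathcal{A}_1$ provided by Proposition 2.4(iii).

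\emph{Commutation with $\binom{H}{s}$.} Proposition 2.1(ii) gives $\binom{H}{s} X^{(np)} = X^{(np)} \binom{H+2np}{s}$, so the claim reduces to showing $\binom{H+2np}{s} = \binom{H}{s}$. Expanding via Proposition 2.1(v),
\[
\binom{H+2np}{s} = \sum_{i=0}^{s} \binom{2np}{s-i} \binom{H}{i},
\]
and invoking Proposition 2.2 with the decompositions $2np = 0 + (2n)p$ and $s-i = (s-i) + 0\cdot p$ (legitimate since $0 \le s-i \le p-1$) yields $\binom{2np}{s-i} \equiv \binom{0}{s-i}\binom{2n}{0} \pmod p$, which is $1$ when $i = s$ and $0$ otherwise. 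The sum collapses to $\binom{H}{s}$, as required.

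\emph{Commutation with $Y^{(m)}X^{(m)}$.} Moving $X^{(np)}$ past $Y^{(m)}$ via Proposition 2.1(i) and then merging $X^{(np-i)}X^{(m)}$ via Proposition 2.1(iii) gives
\[
X^{(np)} Y^{(m)} X^{(m)} = \sum_{i=0}^{m} \binom{np-i+m}{m}\, Y^{(m-i)} \binom{H-np-m+2i}{i} X^{(np-i+m)}.
\]
Writing $np-i+m = (m-i) + np$ with $m-i \in [0,p-1]$, Proposition 2.2 yields $\binom{np-i+m}{m} \equiv \binom{m-i}{m}\binom{n}{0} \pmod p$, which vanishes unless $i = 0$; the $i = 0$ term collapses to $Y^{(m)} X^{(np+m)}$. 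On the other side, Proposition 2.1(iii) and an analogous Proposition 2.2 computation give $Y^{(m)} X^{(m)} X^{(np)} = Y^{(m)} \binom{np+m}{np} X^{(np+m)} = Y^{(m)} X^{(np+m)}$, matching. There is no conceptual obstacle here: the whole proof is a bookkeeping exercise in which Proposition 2.2 kills every undesired term. The only care required is checking that the relevant indices $(s-i, m-i, \ldots)$ lie in $[0,p-1]$, which is guaranteed by the hypotheses $s, m \le p-1$; beyond this, no identities outside Propositions 2.1 and 2.2 are needed.
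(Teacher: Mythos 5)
Your proof is correct, but it takes a longer route than the paper. The paper observes that $\mathcal{A}_1$ is generated as a $k$-algebra by just two elements, $H=\binom{H}{1}$ and $YX$, so it suffices to check that $X^{(np)}$ (and $Y^{(np)}$) commutes with those two generators; each of those checks is a one-line application of Proposition~2.1 together with $np\equiv 0\pmod p$, e.g.\ $X^{(np)}H=(H-2np)X^{(np)}=HX^{(np)}$ and $X^{(np)}YX=YX^{(np)}X+(H-np+1)X^{(np-1)}X=YXX^{(np)}$ because $X^{(np-1)}X=npX^{(np)}=0$. You instead verify commutation against the full $k$-basis $\binom{H}{s}Y^{(m)}X^{(m)}$ of $\mathcal{A}_1$ supplied by Proposition~2.4(iii), reducing each commutator to a Lucas-type computation via Proposition~2.2. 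Your calculations are all correct (the index ranges $s-i,\ m-i\in[0,p-1]$ do hold, and both binomial reductions collapse as you claim), so the argument goes through; it simply does more work than needed, since checking commutation with a generating set already suffices, and the paper's choice of the two-element generating set $\{H, YX\}$ makes the verification essentially immediate. An additional small gain of the paper's approach is that it avoids invoking the freeness statement of Proposition~2.4(iii) at all.
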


\begin{proof}
We may assume that $n>0$. Then  
\begin{eqnarray*}
X^{(np)} Y X
& = & YX^{(np)} X + (H-np+1) X^{(np-1)} X \\
& = & Y X X^{(np)}
\end{eqnarray*} 
and 
$$X^{(np)} H= (H-2np)  X^{(np)}  = H X^{(np)} .$$
Similarly we have $Y X Y^{(np)}= Y^{(np)} Y X$ and 
$Y^{(np)} H= H Y^{(np)} $, as desired. 
\end{proof}

\section{Representation theory of $\mathcal{U}$}
We describe some elementary facts on the representation theory of $\mathcal{U}$ and 
some subalgebras.  

In this paper, all modules we consider are assumed to be finite-dimensional left modules. The category of finite-dimensional $\mathcal{U}$-
(resp.  $\mathcal{U}_r$-) modules  
is identified with that of finite-dimensional (rational) $G$-
(resp. $G_r$-) modules. 

For a nonzero $\mathcal{U}^0$-module $V$ and an integer $\lambda \in \mathbb{Z}$, set
$$V_{\lambda}= \bigg\{ v \in V \ \bigg|\ {H \choose m} v = {\lambda \choose m} v,\ 
\forall m \in \mathbb{Z}_{\geq 0} \bigg\}.$$
This is a $\mathcal{U}^0$-submodule of $V$. If $V_{\lambda} \neq 0$, then 
we call it the $\mathcal{U}^0$-weight space of ($\mathcal{U}^0$-) weight $\lambda$. 
A nonzero element of $V_{\lambda}$ is called a $\mathcal{U}^0$-weight vector of  
weight $\lambda$. Any $\mathcal{U}$-module $M$ can be written as a direct sum of 
its $\mathcal{U}^0$-weight spaces: $M= \bigoplus_{\lambda \in \mathbb{Z}} M_{\lambda}$.  
For $\lambda \in \mathbb{Z}$, let $k_{\lambda}$ be a one-dimensional 
$\mathcal{U}^0$-module which is also a $\mathcal{U}^0$-weight space of weight $\lambda$. 

As in the case of $\mathcal{U}^0$-modules, we can define a notion of weights for 
$\mathcal{U}_r^0$-modules. For an integer $\mu \in \mathbb{Z}$, 
we consider the subspace $V_{(r, \mu)}$ of a nonzero $\mathcal{U}_r^0$-module 
$V$ consisting of the vectors $v \in V$ which satisfy 
${H \choose m} v = {\mu \choose m} v$ for all integers $m$ with $0 \leq m \leq p^r-1$. 
If  $V_{(r, \mu)} \neq 0$, then we  call it the $\mathcal{U}_r^0$-weight space of 
$\mathcal{U}_r^0$-weight $\mu$. If $\nu \equiv \mu \ ({\rm mod}\ p^r)$, then we have 
$V_{(r, \nu)}=V_{(r, \mu)}$ since 
${\nu \choose m} \equiv {\mu \choose m}\ ({\rm mod}\ p)$ for all $m$ 
with $0 \leq m \leq p^r-1$. Therefore, a $\mathcal{U}_r^0$-weight 
$\mu \in \mathbb{Z}$ can be  regarded as an element of $\mathbb{Z}/ p^r \mathbb{Z}$ and a $\mathcal{U}_r^0$-module $V$ is decomposed as 
$V= \bigoplus_{\mu =0}^{p^r-1} V_{(r, \mu)} = 
\bigoplus_{\mu  \in \mathbb{Z}/ p^r \mathbb{Z}} V_{(r, \mu)}$. 
Moreover,  
if $M$ is a $\mathcal{U}$-module, then  the $\mathcal{U}_r^0$-weight space 
$M_{(r, \mu)}$ of $\mathcal{U}_r^0$-weight $\mu \in \mathbb{Z}$ is decomposed as 
$M_{(r, \mu )}= \bigoplus_{\lambda} M_{\lambda}$ where $\lambda$ runs through the integers 
with $\lambda \equiv \mu \ ({\rm mod}\ p^r)$.

For a $\mathcal{U}$-module $M$ and $i \in \mathbb{Z}_{\geq 0}$, 
let  $M^{[i]}$ be another $\mathcal{U}$-module defined as follows: it is equal to $M$ as a 
$k$-vector space and the action of $z \in \mathcal{U}$ on $M^{[i]}$ is induced by that of 
${\rm Fr}^i(z)$ on $M$. Therefore, if $v^{[i]}$ is  the corresponding element in 
$M^{[i]}$ for $v \in M$, then  
$z v^{[i]} = \big({\rm Fr}^i(z) v\big)^{[i]}$ for $z \in \mathcal{U}$. The $\mathcal{U}$-module 
$M^{[i]}$ is called the $i$-th Frobenius twist of $M$. If an element $v \in M$ has 
$\mathcal{U}^0$-weight $\lambda$, the corresponding element $v^{[i]} \in M^{[i]}$ has $\mathcal{U}^0$-weight 
$ \lambda p^i$.  Moreover, if $i \geq r$,  $M^{[i]}$ is isomorphic to a direct sum of 
${\rm dim}_k M$ copies of the 
trivial module $k$ as a $\mathcal{U}_r$-module. 
If $M$ and $M'$ are $\mathcal{U}$-modules, the tensor product $M \otimes M'$ 
is again a $\mathcal{U}$-module via the comultiplication on $\mathcal{U}$
$$X^{(m)}(v \otimes v') = \sum_{i=0}^m X^{(i)} v \otimes X^{(m-i)} v'$$ 
and 
$$Y^{(m)}(v \otimes v') = \sum_{i=0}^m Y^{(i)} v \otimes Y^{(m-i)} v'$$ 
for $m \in \mathbb{Z}_{\geq 0}$, $v \in M$ and $v' \in M'$. 

Let $V_{\mathbb{C}}(\lambda)$ be a simple $\mathcal{U}_{\mathbb{C}}$-module with 
highest weight $\lambda \in \mathbb{Z}_{\geq 0}$. For a fixed highest weight vector 
$v_{\lambda} \in V_{\mathbb{C}}(\lambda)$, the $\mathcal{U}$-module 
$V(\lambda)=k \otimes_{\mathbb{Z}} (\mathcal{U}_{\mathbb{Z}} v_{\lambda})$ 
is called the 
Weyl ($\mathcal{U}$-) module with highest weight $\lambda$. For an element of 
$\mathcal{U}_{\mathbb{Z}} v_{\lambda}$, we use the same notation for its image in $V(\lambda)$. 
The vectors $Y^{(i)} v_{\lambda}$ with $0 \leq i \leq \lambda$ have  weight $\lambda-2i$ 
and form a basis of $V(\lambda)$. Each Weyl module $V(\lambda)$ has a unique 
maximal submodule, and the  quotient $L(\lambda)$ of $V(\lambda)$ by the submodule 
is a simple $\mathcal{U}$-module. Then all 
$L(\lambda)$ with $\lambda \in \mathbb{Z}_{\geq 0}$ form the set of non-isomorphic simple $\mathcal{U}$-modules. 
For $\lambda \in \mathbb{Z}_{\geq 0}$ and its $p$-adic expansion 
$\lambda= \sum_{i=0}^{n-1} \lambda_i p^i$, we have by Steinberg's tensor product 
theorem that 
$$L(\lambda) \cong L(\lambda_0) \otimes L(\lambda_1)^{[1]} \otimes \cdots 
\otimes L(\lambda_{n-1})^{[n-1]}$$
as $\mathcal{U}$-modules. 
For $i \in \mathbb{Z}_{\geq 0}$, the simple 
module $L(p^i-1)$ is called the $i$-th Steinberg module and often denoted by 
${\rm St}_i$. The trivial $\mathcal{U}$-module $k$ is isomorphic to $L(0)$, and 
the $r$-th Steinberg module ${\rm St}_r$ is projective as a $\mathcal{U}_r$-module. If 
$0 \leq \lambda \leq p^r-1$, then $L(\lambda)$ is also simple as a 
$\mathcal{U}_r$-module, and any simple $\mathcal{U}_r$-module can be obtained 
in this way. For an integer $\lambda$ with $0 \leq \lambda \leq p^r-1$, let $Q_r(\lambda)$ be the projective cover of the simple $\mathcal{U}_r$-module $L(\lambda)$. 
It is known that these $\mathcal{U}_r$-modules $Q_r(\lambda)$ can be extended to  $\mathcal{U}$-modules for any $r \in \mathbb{Z}_{> 0}$ 
in this situation $G={\rm SL}_2(k)$ (see \cite[4.5 Corollar]{jantzen80} or 
\cite[II. 11.11]{jantzenbook}). If $0 \leq \lambda \leq p-2$, then $Q_1(\lambda)$ 
is a uniserial $\mathcal{U}$-module with 
$$Q_1(\lambda)/ {\rm rad}_{\mathcal{U}}Q_1(\lambda) \cong L(\lambda),$$ 
$${\rm rad}_{\mathcal{U}}Q_1(\lambda)/{\rm soc}_{\mathcal{U}}Q_1(\lambda) 
\cong L(2p-2-\lambda)$$
and 
$${\rm soc}_{\mathcal{U}}Q_1(\lambda) \cong L(\lambda),$$
whereas $Q_1(p-1)= L(p-1)={\rm St}_1$. As in the simple 
$\mathcal{U}$-modules, for $\lambda \in \mathbb{Z}_{\geq 0}$ and its $p$-adic expansion 
$\lambda= \sum_{i=0}^{n-1} \lambda_i p^i$, we have 
$$Q_n(\lambda) \cong Q_1(\lambda_0) \otimes Q_1(\lambda_1)^{[1]} \otimes \cdots 
\otimes Q_1(\lambda_{n-1})^{[n-1]}$$
as $\mathcal{U}$-modules. 
The highest $\mathcal{U}^0$-weight in $Q_n(\lambda)$ is $2p^n-2-\lambda$ and 
the lowest one is $-2p^n+2+\lambda$ (see \cite[(2.2) Example 1]{donkin93}).

For later use we also consider some modules for $\widetilde{\mathcal{U}}_r$ and 
$\mathcal{U}_{r,r'}$ for an integer $r'$ greater than $r$. Let 
$\lambda',\lambda'' \in \mathbb{Z}$ be integers with $0 \leq \lambda' \leq p^r-1$ and set 
$\lambda = \lambda' +  \lambda''p^r$. The simple $\mathcal{U}$-module 
$L(\lambda')$ is also 
simple as a $\widetilde{\mathcal{U}}_r$-module, and then 
$\widetilde{L}_r(\lambda) = L(\lambda') \otimes k_{ \lambda''p^r}$ is a simple 
$\widetilde{\mathcal{U}}_r$-module with highest $\mathcal{U}^0$-weight 
$\lambda$, where the one-dimensional $\mathcal{U}^0$-module $k_{ \lambda''p^r}$ 
is regarded as a $\widetilde{\mathcal{U}}_r$-module by the trivial action of $\mathcal{U}_r$.  
Then we see that  $\widetilde{L}_r(\lambda) \cong L(\lambda')$ as $\mathcal{U}_r$-modules. Similarly, set 
$\widetilde{Q}_r(\lambda) = Q_r(\lambda') \otimes k_{ \lambda''p^r}$ for 
the above $\lambda=\lambda' +  \lambda''p^r$. 
This is the projective cover of the simple $\widetilde{\mathcal{U}}_r$-module 
$\widetilde{L}_r(\lambda)$, and we have $\widetilde{Q}_r(\lambda) \cong Q_r(\lambda')$ as 
$\mathcal{U}_r$-modules. The simple $\widetilde{\mathcal{U}}_r$-modules $\widetilde{L}_r(\nu)$ with $\nu \in \mathbb{Z}$ are simple as $\mathcal{U}_{r,r'}$-modules, 
and all simple $\mathcal{U}_{r,r'}$-modules can be obtained in this way. 
Since $\widetilde{L}_r(\nu_1) \cong \widetilde{L}_r(\nu_2)$ as 
$\mathcal{U}_{r,r'}$-modules if and only if 
$\nu_1 \equiv \nu_2\ ({\rm mod}\ p^{r'})$, all non-isomorphic simple $\mathcal{U}_{r,r'}$-modules can be written as $\widetilde{L}_r(\nu)$,  
$0 \leq \nu \leq p^{r'}-1$.  Then the simple $\mathcal{U}_{r,r'}$-module $\widetilde{L}_r(\nu)$ has $\widetilde{Q}_r(\nu)$ as its projective cover.

The following proposition will be used later to determine the PIMs for 
$\mathcal{U}_{r,r'}$ (hence for $\mathcal{U}_r$) 
which the idempotents given there generate. 

\begin{Proposition} 
Let $r'$ be an integer which is greater than $r$.  
Let $\lambda$ be an integer with $0 \leq \lambda \leq p^{r'}-1$, and let $v$ be a 
$\mathcal{U}_{r'}^0$-weight vector of $\mathcal{U}_{r'}^0$-weight 
$\nu$ with $0 \leq \nu \leq p^{r'}-1$ which is also a generator of  the projective indecomposable $\mathcal{U}_{r,r'}$-module 
$\widetilde{Q}_r(\lambda)$. Let 
$t$ be the largest integer $n$ with $X^{(n)} v \neq 0$ and $0 \leq n \leq p^r-1$. Then if we write $\lambda= \lambda'+ \lambda''p^r$ and $\nu=\nu' + \nu''p^r$ for some unique 
integers $\lambda', \lambda'',\nu',\nu''$ with $0 \leq \lambda',\nu' \leq p^r-1$ and 
$0 \leq \lambda'',\nu'' \leq p^{r'-r}-1$, the following holds: \\ \\
{\rm (i)} $t= p^r-1-(\lambda'+\nu')/2$ and $\lambda''=\nu''$ if $\nu'+2t \leq 2p^r-2$, \\
{\rm (ii)} $t= 3p^r/2-1-(\lambda'+\nu')/2$ and $\lambda''=\nu''+1$ 
if $\nu'+2t > 2p^r-2$ and $\nu'' \neq p^{r'-r}-1$, \\
{\rm (iii)} $t= 3p^r/2-1-(\lambda'+\nu')/2$ and $\lambda''=0$ if $\nu'+2t > 2p^r-2$ 
and $\nu'' = p^{r'-r}-1$. 
\end{Proposition}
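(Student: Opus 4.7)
The plan is to track $v$ through its image in the head $\widetilde{L}_r(\lambda) = L(\lambda')\otimes k_{\lambda''p^r}$ of $\widetilde{Q}_r(\lambda)$, identify the $\mathcal{U}^0$-weight of $v$ from there, and then pin down $t$ by combining the highest-weight bound on $\widetilde{Q}_r(\lambda)$ with the generator property of $v$.

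Since $v$ generates, its image $\bar{v}$ in the head is a nonzero $\mathcal{U}_{r'}^0$-weight vector of weight $\nu$. The $\mathcal{U}^0$-weights occurring in $\widetilde{L}_r(\lambda)$ are $\lambda'-2i+\lambda''p^r$ for $0 \le i \le \lambda'$; using $2\lambda'\le 2(p^r-1)<p^{r'}$, these are pairwise incongruent modulo $p^{r'}$, so $\bar{v}$ is actually a $\mathcal{U}^0$-weight vector of some weight $\mu_0 = \nu_0' + \lambda''p^r$ with $\nu_0' \in [-\lambda',\lambda']$, and $\nu\equiv\mu_0\pmod{p^{r'}}$. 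An analogous check shows $v$ itself is a $\mathcal{U}^0$-weight vector of weight $\mu_0$: any other $\mathcal{U}^0$-weight $\mu$ of $\widetilde{Q}_r(\lambda)$ with $\mu\equiv\mu_0\pmod{p^{r'}}$ would satisfy $|\mu-\mu_0|\ge p^{r'}$, but since $\widetilde{Q}_r(\lambda)$ has $\mathcal{U}^0$-weights in $[-2p^r+2+\lambda'+\lambda''p^r,\, 2p^r-2-\lambda'+\lambda''p^r]$ and $\mu_0$ lies in the head range, one gets $|\mu-\mu_0|\le 2p^r-2<p^{r'}$ (using $r'>r$), a contradiction.

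Reading the congruence $\nu\equiv\nu_0'+\lambda''p^r\pmod{p^{r'}}$ in base $p^r$ then yields the trichotomy: if $\nu_0'\ge 0$, then $\nu'=\nu_0'$ and $\lambda''=\nu''$ (case (i)); if $\nu_0'<0$, then $\nu'=\nu_0'+p^r$ and $\lambda''\equiv\nu''+1\pmod{p^{r'-r}}$, which gives case (ii) when $\nu''<p^{r'-r}-1$ and case (iii) (where necessarily $\lambda''=0$) when $\nu''=p^{r'-r}-1$.

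Finally, set $t_A=(2p^r-2-\lambda'-\nu_0')/2$, which equals $p^r-1-(\lambda'+\nu')/2$ in case (i) and $3p^r/2-1-(\lambda'+\nu')/2$ in cases (ii)/(iii). For any $n$ with $t_A<n\le p^r-1$, the $\mathcal{U}^0$-weight $\mu_0+2n$ of $X^{(n)}v$ would exceed the highest $\mathcal{U}^0$-weight $2p^r-2-\lambda'+\lambda''p^r$ of $\widetilde{Q}_r(\lambda)$, forcing $X^{(n)}v=0$; hence $t\le t_A$. Conversely, because $v$ generates $\widetilde{Q}_r(\lambda)$ and the highest weight space of $\widetilde{Q}_r(\lambda)$ is one-dimensional, there exists $z\in\mathcal{U}_{r,r'}$ such that $zv$ is a nonzero highest weight vector. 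After discarding components of $z$ of degree $\ne t_A$ (they cannot contribute to the highest weight) and writing $z=\sum_{m,n}c_{m,n}Y^{(m)}\binom{H}{n}X^{(m+t_A)}$, every term with $m>0$ satisfies $X^{(m+t_A)}v=0$ by the same weight argument; hence $zv=\sum_n c_{0,n}\binom{H}{n}X^{(t_A)}v$, which forces $X^{(t_A)}v\ne 0$ and $t=t_A$. I expect this last step -- extracting $X^{(t_A)}v\ne 0$ from the generator property via the degree/triangular decomposition of $\mathcal{U}_{r,r'}$ -- to be the main obstacle.
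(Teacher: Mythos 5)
Your proof is correct and takes essentially the same approach as the paper: the key step in both is showing that $v$ is a genuine $\mathcal{U}^0$-weight vector whose weight $\mu_0\in\{\nu,\nu-p^{r'}\}$ is the unique one lying in the weight range of $\widetilde{L}_r(\lambda)$ (the paper packages this as a decomposition $v=v_{-1}+v_0+v_1$ with a radical argument, you package it via the image $\bar v$ in the head and a $<p^{r'}$ gap bound, but the content is identical). The step you flagged as the main obstacle is precisely the paper's own argument for determining $t$ — pick $z$ with $zv$ a highest weight vector, keep only the degree-$t_A$ part $\sum_{m,n}c_{m,n}Y^{(m)}\binom{H}{n}X^{(m+t_A)}$, and note the $m>0$ terms annihilate $v$ for weight reasons, forcing $X^{(t_A)}v\ne 0$.
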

\begin{proof}
Recall that the highest $\mathcal{U}^0$-weight in $\widetilde{Q}_r(\lambda)$ is 
$2p^r-2-\lambda'+ \lambda''p^r$ and the lowest one is 
$-2p^r+2+\lambda'+ \lambda''p^r$ if we extend the $\mathcal{U}_{r,r'}$-module 
$\widetilde{Q}_r(\lambda)$  
to a $\widetilde{\mathcal{U}}_r$-module. 
Hence $v$ has the $\mathcal{U}^0$-weight space decomposition 
$$v=v_{-1} + v_{0} + v_{1},$$
where  $v_i \in \widetilde{Q}_r(\lambda)_{\nu +i p^{r'}}$.  Since $v$ generates 
$\widetilde{Q}_r(\lambda)$, 
$v$ does not lie in 
${\rm rad}_{\mathcal{U}_{r,r'}} \widetilde{Q}_r(\lambda)={\rm rad}_{\widetilde{\mathcal{U}}_r} \widetilde{Q}_r(\lambda)$. 
Note that  
$\widetilde{Q}_r(\lambda)/ {\rm rad}_{\widetilde{\mathcal{U}}_r}\widetilde{Q}_r(\lambda) 
\cong \widetilde{L}_r(\lambda)$ and that any 
$\mathcal{U}^0$-weight $\gamma$ of $\widetilde{L}_r(\lambda)$ satisfies 
$-\lambda'+ \lambda''p^r \leq \gamma \leq \lambda'+ \lambda''p^r$. 
Since $\nu +p^{r'}$ is not a $\mathcal{U}^0$-weight of $\widetilde{L}_r(\lambda)$, we have 
$v_1 \in {\rm rad}_{\widetilde{\mathcal{U}}_r} \widetilde{Q}_r(\lambda)$. 
Therefore, we see that $v_{-1}$ or 
$v_{0}$ does not lie in ${\rm rad}_{\widetilde{\mathcal{U}}_r} \widetilde{Q}_r(\lambda)$.   

Suppose that $v_{-1} \not\in {\rm rad}_{\widetilde{\mathcal{U}}_r}\widetilde{Q}_r(\lambda)$. Then 
since $-\lambda'+ \lambda''p^r \leq \nu-p^{r'} \leq \lambda'+ \lambda''p^r$, 
we must have $\lambda''=0$ and $\nu-p^{r'} \geq -\lambda'$. 
 Therefore, since $$\nu \geq p^{r'}-\lambda' > 2(p^r-1) -\lambda' 
=2(p^r-1) -\lambda'+ \lambda''p^r,$$
we have $v_0=v_{1}=0$ and $v=v_{-1}$. Note  that the inequality 
$\nu \geq p^{r'}-\lambda'$ also implies 
$\nu''=p^{r'-r}-1$. 
Since $v \not\in {\rm rad}_{\widetilde{\mathcal{U}}_r} \widetilde{Q}_r(\lambda)$, 
there exists 
an element $z \in \widetilde{\mathcal{U}}_r$ such that $z v$ is a highest $\mathcal{U}^0$-weight vector in  
$\widetilde{Q}_r(\lambda)$ (with   $\mathcal{U}^0$-weight $2p^r-2- \lambda'+\lambda''p^{r}$).  
Moreover, since $v$ has 
$\mathcal{U}^0$-weight $\nu-p^{r'}$, the element $z$ can be taken as a linear combination 
of some elements of the form 
$$Y^{(m_1)} h X^{(m_2)}$$
with $h \in \mathcal{U}^0$, $0 \leq m_1, m_2 \leq p^r-1$ and  
$$m_2-m_1 = (2p^r-2-\lambda'+\lambda''p^r -(\nu-p^{r'}))/2 = 
3p^r/2-1-(\lambda'+\nu')/2.$$
This implies that $z v$ is proportional to 
$X^{( 3p^r/2-1-(\lambda'+\nu')/2)} v $ and that  
$t=3p^r/2-1-(\lambda'+\nu')/2$. In this case we also see that 
$\nu'+2t = 3p^r-2-\lambda' > 2p^r-2$. 

Finally suppose that $v_{0} \not\in {\rm rad}_{\mathcal{U}}\widetilde{Q}_r(\lambda)$. 
Then $-\lambda'+ \lambda''p^r \leq \nu \leq \lambda'+ \lambda''p^r$. Since 
$\nu-p^{r'} < -2(p^r-1)+\lambda'+ \lambda''p^r$ and 
$\nu+p^{r'} > 2(p^r-1)-\lambda'+ \lambda''p^r$, we must have 
$v_1=v_{-1}=0$ and $v=v_0$. Moreover, $\lambda''$ must be equal to $\nu''$ or 
$\nu''+1$ since 
$-\lambda' \leq \nu' +(\nu'' -\lambda'')p^r \leq \lambda'$.   
As in the last paragraph, we see that  $\lambda''=\nu''$ if and only if  
$$t=(2(p^r-1) -\lambda' + \lambda''p^r-\nu)/2 = p^r-1 -(\lambda'+\nu')/2,$$
and in this case we have $\nu'+2t = 2p^r-2-\lambda' \leq 2p^r-2$. 
Similarly, we see that $\lambda''=\nu''+1$ if and only if  
$$t=(2(p^r-1) -\lambda' + \lambda''p^r-\nu)/2 = 3p^r/2-1 -(\lambda'+\nu')/2,$$
and in this case we have $\nu'' \neq p^{r'-r}-1$ and $\nu'+2t = 3p^r-2-\lambda' > 2p^r-2$. 
Therefore,  the proposition is proved.  
\end{proof}

\section{Primitive idempotents in $\mathcal{U}_1$}
We shall construct primitive idempotents in $\mathcal{U}_1$, 
following Seligman's method \cite{seligman03}. 

For $a \in \mathbb{Z}$, set 
$$\mu_a = {H-a-1 \choose p-1} = \sum_{i=0}^{p-1}
{-a-1 \choose p-1-i} {H \choose i} \in \mathcal{U}_1^0.$$
Then $\mu_a = \mu_b$ if and only if $a \equiv b\ ({\rm mod}\ p)$. Therefore, 
the integer $a$ in the symbol $\mu_a$ can be regarded as an element of 
$\mathbb{F}_p = \mathbb{Z}/p \mathbb{Z}$. Then we obtain by Wilson's theorem 
that 
\begin{eqnarray*}
\mu_a & = & \dfrac{(H-a-1)(H-a-2) \cdots (H-a-p+1)}{(p-1)!} \\
         & = & -(H-a-1)(H-a-2) \cdots (H-a-p+1) \\
         & = & - \prod_{\gamma \in \mathbb{F}_p -\{ a \}} (H - \gamma).
\end{eqnarray*}
It is easy to check the following facts (see  \cite[\S 4]{gros-kaneda15}). 

\begin{Proposition}
{\rm (i)} For $a \in \mathbb{Z}$, we have $H \mu_a = a \mu_a$. Therefore, $\mu_a$ is 
a $\mathcal{U}_1^0$-weight vector of weight $a$ in the $\mathcal{U}_1^0$-module  
$\mathcal{U}_1^0$. \\ \\
{\rm (ii)} The elements $\mu_a$ with $a \in \mathbb{F}_p$ are pairwise orthogonal 
primitive idempotents in $\mathcal{U}_1^0$ whose sum is $1 \in \mathcal{U}_1^0$. \\ \\
{\rm (iii)} $\mu_{a} X^{(m)} = X^{(m)} \mu_{a-2m}$ and $\mu_{a} Y^{(m)} = Y^{(m)} \mu_{a+2m}$ 
for $a \in \mathbb{Z}$ and $m \in \mathbb{Z}_{\geq 0}$. 
\end{Proposition}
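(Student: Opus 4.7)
I would attack the three statements in order, using the product form $\mu_a=-\prod_{\gamma\in\mathbb{F}_p,\,\gamma\ne a}(H-\gamma)$ displayed just above the proposition, together with Proposition 2.1(ii) and Wilson's theorem as the only inputs.

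For (i), the key observation is that $\prod_{\gamma\in\mathbb{F}_p}(H-\gamma)=0$ in $\mathcal{U}$. This follows from the integral identity $p!{H\choose p}=H(H-1)\cdots(H-p+1)$ in $\mathcal{U}_{\mathbb{Z}}$: reducing modulo $p$ kills the left-hand side, and as polynomials in $\mathbb{F}_p[H]$ one has $H(H-1)\cdots(H-p+1)=\prod_{\gamma\in\mathbb{F}_p}(H-\gamma)$. Multiplying $\mu_a$ by $H-a$ supplies the missing factor, so $(H-a)\mu_a=0$ and therefore $H\mu_a=a\mu_a$. That $\mu_a$ is a $\mathcal{U}_1^0$-weight vector of weight $a$ is then automatic: each ${H\choose n}$ with $0\le n\le p-1$ is a polynomial in $H$ (with coefficients in $k$, as $n!$ is invertible mod $p$), hence acts on $\mu_a$ by evaluation at $a$, giving ${H\choose n}\mu_a={a\choose n}\mu_a$.

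For (ii), orthogonality for $a\ne b$ follows because the product $\mu_a\mu_b$ contains all factors $(H-\gamma)$, $\gamma\in\mathbb{F}_p$, and so vanishes by the identity just established. Idempotence uses (i):
\[
\mu_a^2 = \Bigl(-\prod_{\gamma\ne a}(H-\gamma)\Bigr)\mu_a = -\prod_{\gamma\ne a}(a-\gamma)\cdot\mu_a,
\]
and $-\prod_{\gamma\ne a}(a-\gamma)=-\prod_{\delta\in\mathbb{F}_p^{\times}}\delta=-(p-1)!=1$ by Wilson. To show $\sum_a\mu_a=1$, I would verify that $\mu_a$ acts on any $\mathcal{U}_1^0$-weight vector of weight $b\in\mathbb{F}_p$ as the scalar ${b-a-1\choose p-1}$, which equals $1$ when $a=b$ (since ${-1\choose p-1}=(-1)^{p-1}=1$ in $\mathbb{F}_p$) and $0$ otherwise (since ${c\choose p-1}=0$ for $0\le c\le p-2$); evaluating $\sum_a\mu_a$ at the weight-$0$ vector $1\in\mathcal{U}_1^0$ then yields $\sum_a\mu_a=1$. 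Primitivity is forced by a dimension count: $\mathcal{U}_1^0$ has dimension $p$, and so a family of $p$ pairwise orthogonal nonzero idempotents cannot be refined further.

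For (iii), the single input is Proposition 2.1(ii), which applied to $\mu_a={H-a-1\choose p-1}$ gives
\[
\mu_a X^{(m)} = X^{(m)}{H-a-1+2m\choose p-1} = X^{(m)}\mu_{a-2m},
\]
and the analogous computation with a $-2m$ shift yields $\mu_a Y^{(m)}=Y^{(m)}\mu_{a+2m}$. No step presents a real obstacle; the only subtlety deserving a line of care is the identity $\prod_{\gamma\in\mathbb{F}_p}(H-\gamma)=0$ in $\mathcal{U}$, since ${H\choose p}$ is not itself an element of $\mathcal{U}_1^0$ and one must pass through the integral form to see the relation.
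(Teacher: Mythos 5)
The paper states this proposition without proof, merely citing Gros--Kaneda, so your argument is new content, and it is largely correct: obtaining $\prod_{\gamma\in\mathbb{F}_p}(H-\gamma)=0$ by reducing $p!\binom{H}{p}$ from $\mathcal{U}_{\mathbb{Z}}$ is exactly the right foundation, and parts (i), (iii), the orthogonality, the Wilson-theorem idempotence computation, and the dimension count for primitivity in (ii) all go through as written.

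The one step that does not hold up is the derivation of $\sum_{a\in\mathbb{F}_p}\mu_a=1$. You treat $1\in\mathcal{U}_1^0$ as a weight-$0$ vector for the regular action, but it is not: a weight-$0$ vector $v$ would satisfy $Hv=0$, whereas $H\cdot 1=H\neq 0$. Indeed, if $1$ had weight $0$, your formula for the action of $\mu_a$ on a weight-$b$ vector would force $\mu_a=\mu_a\cdot 1=\delta_{a,0}\cdot 1$, which is plainly false. The statement is of course true and is easy to recover inside your own framework: by what you have already established, the $\mu_a$ with $a\in\mathbb{F}_p$ are $p$ nonzero pairwise orthogonal idempotents in the commutative $p$-dimensional algebra $\mathcal{U}_1^0\cong k[H]/(H^p-H)$, hence linearly independent and therefore a $k$-basis; writing $1=\sum_a c_a\mu_a$ and multiplying by $\mu_b$ gives $\mu_b=c_b\mu_b$, so $c_b=1$ for every $b$ and $\sum_a\mu_a=1$. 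Alternatively, the expansion $\mu_a=\sum_{i=0}^{p-1}\binom{-a-1}{p-1-i}\binom{H}{i}$ displayed just before the proposition, together with the congruence $\sum_{a=0}^{p-1}\binom{-a-1}{p-1-i}\equiv\delta_{i,0}\pmod p$, yields $\sum_a\mu_a=\binom{H}{0}=1$ directly.
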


To construct primitive idempotents in $\mathcal{U}_1$, we need some lemmas. 

\begin{Lemma}
For $m \in \mathbb{Z}_{> 0}$ and $a \in \mathbb{F}_p$, we have 
$$\mu_a Y^m X^m = \prod_{i=0}^{m-1} \big( \mu_a YX - i(i+a+1) \big)$$
and
$$\mu_a X^m Y^m = \prod_{i=0}^{m-1} \big( \mu_a XY - i(i-a+1) \big)$$
in $\mathcal{U}_1$.
\end{Lemma}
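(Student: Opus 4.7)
The plan is to establish the first identity by induction on $m$ and then deduce the second from it by applying the Chevalley involution $\omega$ of $\mathcal{U}$ given by $\omega(X)=Y$, $\omega(Y)=X$, $\omega(H)=-H$. The base case $m=1$ is immediate, since the product on the right reduces to the single factor $\mu_a YX - 0\cdot(a+1) = \mu_a YX$, matching the left-hand side.

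For the inductive step I first record the commutation
$$X^m Y \;=\; Y X^m \;+\; m(H-m+1)X^{m-1},$$
provable by an easy induction from $XY = YX + H$ and $XH = (H-2)X$, or by extracting the factor $m!$ from Proposition 2.1(i) applied to $X^{(m)} Y^{(1)}$. Left-multiplying by $Y^m$ and right-multiplying by $X$, and using $Y^m H = (H+2m)Y^m$ to rewrite $Y^m(H-m+1) = (H+m+1)Y^m$, gives
$$Y^{m+1} X^{m+1} \;=\; Y^m X^m\cdot YX \;-\; m(H+m+1)\,Y^m X^m.$$
Now I left-multiply by $\mu_a$. Iterating Proposition 4.1(iii) shows $\mu_a Y^m X^m = Y^m X^m \mu_a$ (the weight-shifts $+2m$ and $-2m$ cancel), and the relation $H\mu_a = \mu_a H = a\mu_a$ lets me replace $\mu_a(H+m+1)$ by the scalar $(a+m+1)\mu_a$, yielding
$$\mu_a Y^{m+1} X^{m+1} \;=\; \mu_a Y^m X^m \cdot YX \;-\; m(a+m+1)\,\mu_a Y^m X^m.$$
Since $\mu_a YX = YX\mu_a$ (two applications of Proposition 4.1(iii)) and $\mu_a^2 = \mu_a$, the first summand equals $\mu_a Y^m X^m \cdot \mu_a YX$, so the right-hand side factors as $\mu_a Y^m X^m\bigl(\mu_a YX - m(m+a+1)\bigr)$. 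Applying the inductive hypothesis to $\mu_a Y^m X^m$ then closes the induction.

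For the second identity I will apply the algebra automorphism $\omega\colon \mathcal{U} \to \mathcal{U}$ determined on the integral basis by $\omega(X^{(n)}) = Y^{(n)}$, $\omega(Y^{(n)}) = X^{(n)}$ and $\omega\bigl({H \choose n}\bigr) = {-H \choose n}$. A short calculation using the explicit product formula $\mu_a = -\prod_{\gamma \in \mathbb{F}_p - \{a\}}(H-\gamma)$ recorded just before Proposition 4.1, together with the substitution $\gamma \mapsto -\gamma$, shows $\omega(\mu_a) = \mu_{-a}$. Applying $\omega$ to the first identity and then relabelling $a \mapsto -a$ gives exactly the second. The main bookkeeping point will be getting the sign/weight shift right in the step $\mu_a(H+m+1) = (a+m+1)\mu_a$ and verifying $\omega(\mu_a) = \mu_{-a}$; everything else is routine manipulation of the relations in $\mathcal{U}$.
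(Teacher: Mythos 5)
Your inductive proof of the first identity is correct and follows essentially the same route as the paper's: you commute $X^m$ past $Y$ (via $X^m Y = YX^m + m(H-m+1)X^{m-1}$) instead of the paper's commuting $Y$ past $X^{m-1}$, but the mechanics, the use of $H\mu_a=a\mu_a$, and the factoring via $\mu_a^2=\mu_a$ are the same. The identity $Y^{m+1}X^{m+1}=Y^mX^m\cdot YX-m(H+m+1)Y^mX^m$ and the subsequent replacement of $H$ by $a$ are carefully and correctly done.

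For the second identity, you diverge from the paper, which simply says ``Similarly.'' You instead invoke the involution $\omega$ with $X\leftrightarrow Y$, $H\mapsto -H$, check that $\omega(\mu_a)=\mu_{-a}$ via the product formula $\mu_a=-\prod_{\gamma\in\mathbb{F}_p\setminus\{a\}}(H-\gamma)$ and the substitution $\gamma\mapsto-\gamma$ (noting $(-1)^{p-1}=1$ in $\mathbb{F}_p$ for all $p$, including $p=2$), apply $\omega$ to the first identity, and relabel $a\mapsto -a$. This is a slick and correct way to get the second identity for free, avoiding a repetition of the computation. It costs you the need to assert that $\omega$ descends to a $k$-algebra automorphism of $\mathcal{U}$: this is true because $\tau(X)=Y$, $\tau(Y)=X$, $\tau(H)=-H$ does satisfy the $\mathfrak{sl}_2$ bracket relations (check: $[Y,X]=-H$, $[-H,Y]=2Y$, $[-H,X]=-2X$), hence extends to $\mathcal{U}_{\mathbb{C}}$, preserves the divided-power $\mathbb{Z}$-form since $\omega\bigl(\binom{H}{n}\bigr)=\binom{-H}{n}\in\mathcal{U}_{\mathbb{Z}}$, and so reduces modulo $p$. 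You state this without full verification, which is fine for a ``standard fact,'' but since the paper never introduces this involution it would be worth a sentence of justification if you wrote this up. On balance, your argument is correct; the first half is the paper's proof in mirror image, and the second half replaces a repeated computation with a symmetry argument, which is a genuine (if modest) improvement in economy.
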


\begin{proof}
It is clear when $m=1$, and we may assume $m \geq 2$. Then we have 
\begin{eqnarray*}
\lefteqn{\mu_a Y^m X^m} \\
& = & \mu_a Y^{m-1} Y X^{m-1} X \\
& = & \mu_a Y^{m-1} \big( X^{m-1} Y+(m-1)X^{m-2} (-H-m+2) \big) X \\
& = & \mu_a Y^{m-1} X^{m-1} YX + (m-1) \mu_a Y^{m-1} X^{m-1}(-H-m) \\
& = & \mu_a Y^{m-1} X^{m-1} YX + (m-1) \mu_a (-H-m) Y^{m-1} X^{m-1} \\
& = &  \mu_a Y^{m-1} X^{m-1} YX + (m-1) (-a-m) \mu_a Y^{m-1} X^{m-1} \\
& = & \mu_a Y^{m-1} X^{m-1} \big( \mu_a YX -(m-1)(a+m) \big).
\end{eqnarray*}
By induction on $m$, this is equal to 
$$\prod_{i=0}^{m-2}\big(\mu_a YX -i(i+a+1)\big) \big(\mu_a YX -(m-1)(a+m)\big)
=\prod_{i=0}^{m-1}\big(\mu_a YX -i(i+a+1)\big).$$
Similarly we can check that $\mu_a X^m Y^m= \prod_{i=0}^{m-1} \big(\mu_a XY - i(i-a+1)\big)$.
\end{proof}

If $p$ is odd, set $\mathcal{S} = \{ i \in \mathbb{F}_p \ | \ i =0, 1, \cdots , (p-1)/2 \}$.  
Then we define  polynomials $\varphi_{a,m}(x), \psi(x) \in \mathbb{F}_p [x]$ 
for $a \in \mathbb{F}_p$ and $m \in \mathbb{Z}_{\geq 0}$ as 
$$\varphi_{a,0}(x)= 1,$$
$$\varphi_{a,n}(x)= \prod_{i=0}^{n-1} \big(x-i(i+a+1)\big)$$
if $n >0$, and 
$$\psi(x)=\prod_{i \in \mathbb{F}_p} (x- i^2) = x \prod_{i \in \mathcal{S}- \{ 0\}} 
(x- i^2)^2.$$

\begin{Lemma} Suppose that $p$ is odd and let $a \in \mathbb{F}_p$.  Then the following holds. \\ \\
{\rm (i)} $\psi \Big(x+\big((a+1)/2\big)^2\Big) = \varphi_{a,p}(x)$. \\
{\rm (ii)} $\varphi_{a,p} (\mu_a YX) = \varphi_{-a, p}(\mu_a X Y) =0$. 
\end{Lemma}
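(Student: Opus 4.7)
For part (i), the plan is a direct algebraic manipulation. I would complete the square inside each factor of $\varphi_{a,p}(x)$: since
$$i(i+a+1) = \Bigl(i+\tfrac{a+1}{2}\Bigr)^{\!2} - \Bigl(\tfrac{a+1}{2}\Bigr)^{\!2}$$
in $\mathbb{F}_p$ (here we use $p$ odd so that $2$ is invertible), each factor becomes
$$x - i(i+a+1) = \Bigl(x + \bigl(\tfrac{a+1}{2}\bigr)^{2}\Bigr) - \Bigl(i+\tfrac{a+1}{2}\Bigr)^{2}.$$
As $i$ ranges over $\mathbb{F}_p$, so does $j := i + (a+1)/2$, so the product $\prod_{i \in \mathbb{F}_p}$ reindexes as $\prod_{j \in \mathbb{F}_p}\bigl(y - j^2\bigr)$ with $y = x + ((a+1)/2)^2$. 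This is exactly $\psi(y)$, yielding (i).

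For part (ii), the idea is to specialize the identities of Lemma 4.2 at $m = p$ and use that $X^p = Y^p = 0$ in $\mathcal{U}$. Indeed, since $X^p = p!\, X^{(p)}$ and $p! \equiv 0 \pmod p$, we have $X^p = 0$, and similarly $Y^p = 0$. Thus Lemma 4.2 with $m = p$ gives
$$0 = \mu_a Y^p X^p = \prod_{i=0}^{p-1}\bigl(\mu_a YX - i(i+a+1)\bigr) = \varphi_{a,p}(\mu_a YX),$$
and likewise
$$0 = \mu_a X^p Y^p = \prod_{i=0}^{p-1}\bigl(\mu_a XY - i(i-a+1)\bigr) = \varphi_{-a,p}(\mu_a XY),$$
where the second identification uses the definition of $\varphi_{-a,p}$ with the sign of $a$ flipped inside $i(i+a+1)$.

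Neither step presents a serious obstacle: (i) is a substitution in a polynomial identity over $\mathbb{F}_p$, and (ii) is an immediate consequence of Lemma 4.2 combined with the vanishing of $X^p$ and $Y^p$ in the divided-power algebra. The only subtlety worth flagging is the need for $p$ to be odd in (i) so that the square-completion makes sense, and care in matching $\varphi_{-a,p}$ to the product produced by Lemma 4.2's second formula.
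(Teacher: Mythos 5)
Your proof is correct and follows essentially the same route as the paper: part (i) is the same completion-of-the-square and reindexing argument, and part (ii) is exactly the paper's "immediate from Lemma 4.2 since $X^p = Y^p = 0$" — you have merely spelled out the substitution $m=p$ and the sign bookkeeping that identifies the second product with $\varphi_{-a,p}$.
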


\begin{proof}
(i) Set $y = x + \big((a+1)/2\big)^2$. We have 
\begin{eqnarray*}
\varphi_{a,p}(x) & = & \prod_{i \in \mathbb{F}_p} \big(x-i(i+a+1)\big) \\
                    & = & \prod_{i \in \mathbb{F}_p} \Big(x+\big((a+1)/2\big)^2- 
                             \big(i+(a+1)/2\big)^2\Big) \\
                     & = &  \prod_{i \in \mathbb{F}_p} \Big(y- \big(i+(a+1)/2\big)^2\Big) \\
                     & = &  \prod_{i \in \mathbb{F}_p} (y- i^2) \\
                     & = &  \psi(y).
\end{eqnarray*}
(ii) It is immediate from Lemma 4.2 since $X^p=Y^p=0$ in $\mathcal{U}$. 
\end{proof}
If $p$ is odd, we define $(p+1)/2$ polynomials $\psi_0(x)$ and $\psi_j(x)$ with 
$j \in \mathcal{S}- \{ 0\} $  as 
$$\psi_0(x)= \prod_{i \in \mathbb{F}_p^{\times}} (x-i^2) = 
\prod_{i \in \mathcal{S}-\{ 0\}} (x-i^2)^2$$
and 
$$\psi_{j}(x) = 2x(x+j^2) \prod_{i \in \mathbb{F}_p^{\times} - \{ j, p-j \} } (x-i^2)
 = 2x(x+j^2) \prod_{i \in \mathcal{S} - \{0, j \} } (x-i^2)^2.$$

\begin{Lemma} 
Suppose that $p$ is odd. 
The following holds in $\mathbb{F}_p [x]$ ($\delta_{m,n}$ denotes Kronecker's symbol). \\ \\
{\rm (i)} $\sum_{i \in \mathcal{S}} \psi_{i}(x)=1 $. \\
{\rm (ii)} $\psi_m(x) \psi_{n}(x) \equiv \delta_{m,n} \psi_m (x)\ \Big({\rm mod}\ 
\big(\psi(x)\big) \Big)$ for 
 $m,n \in \mathcal{S}$. 
\end{Lemma}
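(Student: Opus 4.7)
The plan is to show that $\psi_0, \psi_1, \ldots, \psi_{(p-1)/2}$ are exactly the primitive idempotents of the quotient ring $\mathbb{F}_p[x]/(\psi(x))$, from which both statements follow. Starting from the factorization $\psi(x) = x \prod_{j \in \mathcal{S} \setminus \{0\}}(x - j^2)^2$, the Chinese Remainder Theorem gives
$$\mathbb{F}_p[x]/(\psi(x)) \;\cong\; \mathbb{F}_p[x]/(x) \;\times\; \prod_{j \in \mathcal{S}\setminus\{0\}} \mathbb{F}_p[x]/((x-j^2)^2),$$
so it suffices to verify that $\psi_j \equiv \delta_{ij} \pmod{(x-i^2)^{e_i}}$ for all $i, j \in \mathcal{S}$, where $e_0 = 1$ and $e_i = 2$ for $i \neq 0$. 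Statement (ii) is then immediate; for (i) one observes that $\deg(\sum_j \psi_j) \leq p-1 < p = \deg \psi$, so the congruence $\sum_j \psi_j \equiv 1 \pmod{\psi}$ upgrades to an equality of polynomials.

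The congruences $\psi_j \equiv 0 \pmod{(x-i^2)^{e_i}}$ for $i \neq j$ are visible from the explicit factors in the definitions of $\psi_0$ and $\psi_j$. The case $i = j = 0$ reduces to $\psi_0(0) = (((p-1)/2)!)^4 = 1$, which follows from Wilson's theorem via $(p-1)! \equiv (-1)^{(p-1)/2}(((p-1)/2)!)^2 \equiv -1$. The two substantive assertions remaining are $\psi_j(j^2) = 1$ and $\psi_j'(j^2) = 0$ for $j \neq 0$, whose conjunction gives $\psi_j \equiv 1 \pmod{(x-j^2)^2}$.

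For the value, I would start from $\prod_{i \in \mathbb{F}_p^\times}(x-i) = x^{p-1}-1$: evaluating a derivative at the root $x = j$ yields $\prod_{i \in \mathbb{F}_p^\times,\, i \neq j}(j-i) = -j^{-1}$, and analogously $\prod_{i \in \mathbb{F}_p^\times,\, i \neq -j}(j+i) = -j^{-1}$. Dividing out the surplus factors $j-(-j) = 2j$ and $j+j = 2j$ gives $\prod_{i \in \mathbb{F}_p^\times,\, i \neq \pm j}(j^2-i^2) = 1/(4j^4)$, and pairing $i \leftrightarrow p-i$ rewrites this as $\bigl(\prod_{i \in \mathcal{S}\setminus\{0,j\}}(j^2-i^2)\bigr)^2$, so that $\psi_j(j^2) = 4j^4 \prod(j^2-i^2)^2 = 1$. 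By logarithmic differentiation, $\psi_j'(j^2) = 0$ is equivalent to the identity $\sum_{i \in \mathcal{S}\setminus\{0,j\}}(j^2-i^2)^{-1} = -3/(4j^2)$. I would prove this via the partial fraction $(j^2-i^2)^{-1} = (2j)^{-1}((j-i)^{-1}+(j+i)^{-1})$: as $i$ runs through $\mathcal{S}\setminus\{0,j\}$, the union $\{j-i\} \cup \{j+i\}$ is exactly $\mathbb{F}_p \setminus \{0, j, 2j\}$, so the sum reduces via $\sum_{k \in \mathbb{F}_p^\times} k^{-1} = 0$ to $(2j)^{-1}(0 - j^{-1} - (2j)^{-1}) = -3/(4j^2)$. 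The main obstacle will be this derivative identity, specifically the reindexing that identifies $\{j \pm i : i \in \mathcal{S}\setminus\{0,j\}\}$ with a near-complete residue system and the careful bookkeeping of the excluded terms.
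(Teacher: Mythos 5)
Your proposal is correct, and its computational core is identical to the paper's: the two substantive facts in both arguments are that $\psi_j(i^2)=\delta_{ij}$ for $i,j\in\mathcal{S}$ and that $\left.\frac{d\psi_j}{dx}\right|_{x=i^2}=0$ for $i\in\mathcal{S}\setminus\{0\}$, combined with the observation that $\deg\psi_j\le p-1<\deg\psi$. The only difference is presentational: you package these local facts via the Chinese Remainder Theorem for $\mathbb{F}_p[x]/(\psi(x))$, obtaining (ii) first as the statement that the $\psi_j$ are orthogonal idempotents and then (i) by a degree upgrade, whereas the paper proves (i) directly by showing $\Phi(x)=\sum_i\psi_i(x)-1$ is divisible by $\psi(x)$ and then derives (ii) from (i) together with the trivial remark that $\psi_m\psi_n\in(\psi(x))$ when $m\neq n$. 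The CRT framing is a clean conceptual gloss, but it does not change what must actually be verified.
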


\begin{proof}
(i) Set $\Phi(x)= \sum_{i \in \mathcal{S} } \psi_i(x) -1$. Then we have 
$$\dfrac{d \psi_0}{d x} = \sum_{t \in \mathcal{S}- \{ 0\}} 2(x-t^2) 
\prod_{i \in \mathcal{S} - \{0, t \} } 
(x-i^2)^2$$
and
$$\dfrac{d \psi_j}{d x} = 2(2x+j^2) \prod_{i \in \mathcal{S} - \{0, j \} } (x-i^2)^2
+2x(x+j^2) \sum_{t \in \mathcal{S} - \{0, j\} } 
2(x-t^2) \prod_{i \in \mathcal{S} - \{0, j,t\} }
(x-i^2)^2$$
for $j \in \mathcal{S} -\{ 0\}$. Suppose that $\Phi(x) \neq 0$ in 
$\mathbb{F}_p [x]$. 
If $s \in \mathcal{S}-\{ 0\}$ we have 
\begin{eqnarray*}
\Phi(s^2) 
& = & \sum_{i \in \mathcal{S}  } \psi_i(s^2) -1 \\
& = & \psi_s(s^2) -1 \\
& = & 2s^2 (s^2+s^2) \prod_{i \in \mathbb{F}_p^{\times} - \{ s, p-s\} } (s^2-i^2)-1 \\
& = & 4s^4\prod_{ i \in \mathbb{F}_p^{\times} - \{ s, p-s\} } (s+i)(s-i)-1 \\
& = & 4s^4 \bigg( \prod_{ i \in \mathbb{F}_p^{\times} - \{ s, p-s\} } (s+i) \bigg)
\bigg( \prod_{j \in \mathbb{F}_p^{\times} - \{ s, p-s\} } (s-j) \bigg) -1 \\
& = & 4s^4 \bigg( \dfrac{1}{2s^2} \prod_{i \in \mathbb{F}_p -\{  p-s\} } (s+i) \bigg)
\bigg( \dfrac{1}{2s^2} \prod_{j \in \mathbb{F}_p - \{ s\} } (s-j) \bigg) -1 \\
& = & 4s^4 \bigg( \dfrac{1}{2s^2} \prod_{i \in \mathbb{F}_p^{\times}} i \bigg)
\bigg( \dfrac{1}{2s^2} \prod_{j \in \mathbb{F}_p^{\times}} j \bigg) -1 \\
& = & 4s^4 \cdot \bigg( - \dfrac{1}{2s^2} \bigg)^2 -1 \\
& = & 0,
\end{eqnarray*}
whereas 
\begin{eqnarray*}
\Phi(0) & = & \sum_{i \in \mathcal{S} } \psi_i(0) -1 \\
& = & \psi_0(0) -1 \\
& = & \prod_{i \in \mathbb{F}_p^{\times}} (-i^2)-1 \\
& = & 1-1 \\
& = & 0.
\end{eqnarray*}
Therefore, $\Phi(x)$ contains each linear polynomial $x-s^2$ with $s \in \mathcal{S} $ as a factor. 

We would like to claim that 
$ \left. \dfrac{d \psi_j}{d x} \right|_{x=s^2} =0$ 
if $j \in \mathcal{S}$ and $s \in \mathcal{S} - \{ 0\}$. Clearly the equality holds 
if $j = 0$ or if $j \neq 0$ and $s \neq j$. So we only have to show that 
$\left. \dfrac{d \psi_j}{d x} \right|_{x=j^2} =0$ for $j \in \mathcal{S}- \{ 0\}$. 
If $j \in \mathcal{S}-\{0\}$, we have 
\begin{eqnarray*}
\left. \dfrac{d \psi_j}{d x} \right|_{x=j^2} 
& = & 6j^2 \prod_{i \in \mathcal{S} - \{0, j \} } (j^2 -i^2)^2 + 
 4j^4 \sum_{t \in \mathcal{S} - \{0, j\} } 2(j^2-t^2) 
\prod_{i \in \mathcal{S} - \{0, j,t \} } (j^2-i^2)^2 \\
& = & \bigg( 6j^2 +4j^4 \sum_{t \in \mathcal{S} - \{0, j \} } \dfrac{2}{j^2-t^2} \bigg) 
\prod_{i \in \mathcal{S} - \{0, j\} } (j^2 -i^2)^2.
\end{eqnarray*} 
But since
\begin{eqnarray*}
\lefteqn{\sum_{t \in \mathcal{S} -\{0, j\}} \dfrac{2}{j^2-t^2}} \\
& = & \dfrac{1}{2} \sum_{t \in \mathbb{F}_p^{\times} - \{ j, p-j \} } \dfrac{2}{j^2-t^2} \\
& = & \dfrac{1}{2j} \sum_{t \in \mathbb{F}_p^{\times} - \{ j, p-j \} } 
\bigg( \dfrac{1}{j-t} +\dfrac{1}{j+t} \bigg) \\
& = & \dfrac{1}{2j} \Bigg( \sum_{t \in \mathbb{F}_p - \{ j\} }
 \dfrac{1}{j-t} -\dfrac{1}{j}-\dfrac{1}{2j}  +  
\sum_{t \in \mathbb{F}_p - \{ p-j\} } 
 \dfrac{1}{j+t} -\dfrac{1}{j}-\dfrac{1}{2j}  \Bigg) \\
& = & \dfrac{1}{2j} \Bigg(  0-\dfrac{1}{j}-\dfrac{1}{2j}  + 
 0- \dfrac{1}{j} - \dfrac{1}{2j}  \Bigg) \\
& = & -\dfrac{3}{2j^2}, 
\end{eqnarray*} 
we conclude that $\left. \dfrac{d \psi_j}{d x} \right|_{x=j^2} =0$, and the claim 
follows. Therefore, 
we obtain   $\left. \dfrac{d \Phi}{d x} \right|_{x=s^2} =0$ for 
$s \in \mathcal{S}-\{0\}$ and 
the polynomial $\Phi(x)$ has $x \prod_{i \in \mathcal{S}-\{0\} } (x-i^2)^2$ as a factor. So  the degree of  the polynomial $\Phi(x)$ is greater than $p-1$, 
which is a contradiction. Therefore, $\Phi(x)$ must be zero and  
(i) is proved. 

(ii) By the definition of $\psi_i(x)$, clearly we have $\psi_m(x) \psi_n(x) \in \big(\psi(x)\big)$ 
for $m \neq n$. Combining this with (i) the result follows.   
\end{proof}

Set
$\mathcal{P}= \mathbb{F}_p \times \mathcal{S}$ if $p$ is odd, and 
$\mathcal{P} = \{(0,1/2), (1,0), (1,1) \} \subset \mathbb{F}_2 \times (1/2)\mathbb{Z}$ if $p=2$.

If $p$ is odd, for a pair $(a,j) \in \mathcal{P}$ we set 
$$E(a, j) = \psi_j \Big(\mu_a YX +\big((a+1)/2\big)^2\Big) \cdot \mu_a.$$
This element also can be written as 
$E(a, j) = \psi_j \Big(\mu_a XY +\big((a-1)/2\big)^2\Big) \cdot \mu_a$. 
If $p=2$, for each pair $(a,j) \in \mathcal{P}$ we define $E(a,j)$ as follows:
$$E(0,1/2)=\mu_0,\ \ E(1,0) = \mu_1YX=\mu_1 (XY+1),\ \ 
E(1,1) = \mu_1 XY = \mu_1 (YX+1).$$
Clearly  all $E(a,j)$ lie in $\mathcal{A}_1$. 

\begin{Proposition}
For a fixed element $a \in \mathbb{F}_p$, 
the elements $E(a,j)$ with  $(a,j) \in \mathcal{P} $ 
 are pairwise orthogonal primitive idempotents in $\mathcal{U}_1$ 
whose sum is $\mu_a$. Thus, all the elements $E(a,j)$ with 
$(a,j) \in \mathcal{P} $ are 
pairwise  orthogonal primitive idempotents in $\mathcal{U}_1$ 
whose sum is $1$. 
\end{Proposition}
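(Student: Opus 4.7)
The plan is to reduce the claim to a per-$a$ statement, reinterpret Lemmas 4.2--4.4 as idempotent identities inside the commutative subalgebra $\mu_a\mathcal{A}_1$, and finally deduce primitivity from a dimension count against the known module-theoretic invariants of $\mathcal{U}_1$.

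I begin with the reduction. Each $E(a,j)$ lies in $\mathcal{A}_1\mu_a$, so the relations $\mu_a\mu_b = \delta_{ab}\mu_a$ of Proposition 4.1(ii) together with $\sum_a\mu_a = 1$ immediately upgrade the per-$a$ assertions (pairwise orthogonality among the $E(a,j)$ and $\sum_j E(a,j) = \mu_a$) to the global assertions. Hence I fix $a \in \mathbb{F}_p$.

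For $p$ odd, the key identity is $\psi\bigl(\mu_a YX + ((a+1)/2)^2\bigr) = 0$ in $\mathcal{U}_1$. Lemma 4.2 with $m = p$, combined with $X^p = Y^p = 0$ in $\mathcal{U}_1$, gives $\varphi_{a,p}(\mu_a YX) = \mu_a Y^p X^p = 0$; Lemma 4.3(i) then rewrites this as the desired equality. Setting $u = \mu_a YX + ((a+1)/2)^2$, I would evaluate Lemma 4.4(i) at $x = u$ and multiply through by $\mu_a$ to obtain $\sum_{j \in \mathcal{S}} E(a,j) = \mu_a$, and evaluate Lemma 4.4(ii) at $x = u$, invoking $\psi(u) = 0$ and the commutativity of $\mathcal{A}_1$ (Proposition 2.4(i)) to obtain $E(a,m)E(a,n) = \delta_{mn}E(a,m)$. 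For $p = 2$, the three needed identities $\mu_1YX + \mu_1XY = \mu_1$, $\mu_1YX \cdot \mu_1XY = 0$, and $(\mu_1YX)^2 = \mu_1YX$ are verified by direct calculation using $[X,Y] = H$, $\mu_1 H = \mu_1$, $X^2 = Y^2 = 0$, and the centrality of every $\mu_a$ in characteristic $2$.

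For primitivity I proceed in two steps. First, I show each $E(a,j) \neq 0$: by Proposition 2.4(iii), $\mu_a\mathcal{A}_1$ has $k$-basis $\mu_aY^{(m)}X^{(m)}$ for $0 \leq m \leq p-1$, and Lemma 4.2 exhibits each such basis element as a polynomial of degree exactly $m$ in $\mu_a YX$. Consequently the $k$-algebra map $k[x]/(\psi(x)) \to \mu_a\mathcal{A}_1$ sending $x \mapsto u\mu_a$ is surjective, and a dimension count (both sides have dimension $p$) makes it an isomorphism; the images $E(a,j)$ of the $\psi_j$ are thus nonzero. Second, I invoke the module-theoretic fact that the regular left $\mathcal{U}_1$-module decomposes as $\bigoplus_{\lambda=0}^{p-1} Q_1(\lambda)^{\oplus(\lambda+1)}$, so any complete orthogonal decomposition of $1 \in \mathcal{U}_1$ into primitive idempotents has exactly $\sum_{\lambda=0}^{p-1}\dim L(\lambda) = p(p+1)/2 = |\mathcal{P}|$ terms. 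Since our $|\mathcal{P}|$ pairwise orthogonal nonzero idempotents already saturate this bound, each must be primitive.

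The main obstacle is the primitivity step: orthogonality and summation are immediate polynomial unpacking via Lemma 4.4, but primitivity requires both the internal structural identification $\mu_a\mathcal{A}_1 \cong k[x]/(\psi(x))$ (to guarantee nonzeroness of all $E(a,j)$) and the external module-theoretic input on the count of indecomposable summands of $\mathcal{U}_1$.
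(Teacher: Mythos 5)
Your proof follows the paper's argument essentially step for step: orthogonality and the summation $\sum_{j} E(a,j)=\mu_a$ come from substituting $u=\mu_a YX+\big((a+1)/2\big)^2$ into Lemma 4.4 and invoking $\psi(u)=0$ via Lemmas 4.2--4.3, the global statement follows from $\mu_a\mu_{a'}=\delta_{a,a'}\mu_a$, and primitivity comes from the count $\sum_{\lambda=0}^{p-1}\dim_k L(\lambda)=p(p+1)/2=|\mathcal{P}|$. The one welcome addition is your explicit identification $\mu_a\mathcal{A}_1\cong k[x]/(\psi(x))$ to certify that each $E(a,j)\neq 0$, a point the paper leaves implicit (it surfaces only later in Lemma 4.8) but which the counting argument silently requires.
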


\begin{proof}
If $p=2$, then the claim but primitivity is clear. Consider the case when $p$ is odd. 
Then note that 
$\mathcal{P}= \mathbb{F}_p \times \mathcal{S}$. For a fixed $a \in \mathbb{F}_p$, we have by Lemma 4.4 (i) that 
$$\sum_{j \in \mathcal{S}  } E(a,j) = 
\bigg( \sum_{j \in \mathcal{S}   } \psi_j
\Big(\mu_a YX + \big((a+1)/2\big)^2\Big) \bigg) \cdot \mu_a 
= \mu_a.$$
On the other hand, it follows from Lemma 4.4 (ii) and Lemma 4.3 (i) and (ii) that the elements $E(a,j)$ with  
$j \in \mathcal{S} $ are pairwise orthogonal idempotents in $\mathcal{U}_1$. 
Moreover, we also see that 
$$\sum_{a \in \mathbb{F}_p} \sum_{j \in \mathcal{S}  } E(a,j) =
\sum_{a \in \mathbb{F}_p} \mu_a = 1$$
and 
$$E(a,j) E(a', j') = \delta_{(a,j), (a', j')} E(a,j)$$
since $\mu_a \mu_{a'} = \delta_{a, a'} \mu_a$ by Proposition 4.1 (ii).  

It remains to show that each idempotent $E(a,j)$ is primitive when $p$ is arbitrary. 
Recall that the non-isomorphic simple $\mathcal{U}_1$-modules 
are $L(\lambda)$ with $0 \leq \lambda \leq p-1$ and that 
${\rm dim}_k L(\lambda)= \lambda+1$. Hence the number of summands in a decomposition of $1 \in \mathcal{U}_1$ into  
pairwise orthogonal primitive idempotents must be 
$$1+2 + \cdots +p = \dfrac{p(p+1)}{2}.$$
On the other hand, the number of all $E(a,j)$ with $(a,j) \in \mathcal{P}$ is also equal to $p(p+1)/2$. Therefore, each idempotent $E(a,j)$ 
must be primitive (see \cite[(54.5) Theorem]{curtis-reiner62}). 
\end{proof}

If $p$ is odd and $(a,j) \in \mathcal{P} $, 
we define $n(a,j)$ as the largest non-negative integer $n$ satisfying 
$\varphi_{a,n} (x) \mid \psi_j \Big(x + \big((a+1)/2\big)^2\Big)$. (Recall that 
$\varphi_{a,0} (x) =1$. Thus $n(a,j)=0$ if 
$x \nmid \psi_j \Big(x + \big((a+1)/2\big)^2\Big)$.) If $p=2$, set 
$$n(0,1/2)=0,\ \ n(1,0)=1,\ \ n(1,1)=0.$$

With respect to 
each pair  
$(a,j) \in \mathcal{P}$, we consider 
the following four cases, regarding $a$ and $j$ as the corresponding integers with 
$0 \leq a \leq p-1$ and $0 \leq j \leq (p-1)/2$ if $p$ is odd: \\ \\
(A) $a$ is even and $(p-a+1)/2 \leq j \leq (p-1)/2$  if $p$ is odd\\
(B) $a$ is even and $0 \leq j \leq (p-a-1)/2$ if $p$ is odd, or $(a,j)=(0,1/2)$ if $p=2$\\
(C) $a$ is odd and $0 \leq j \leq (a-1)/2$  if $p$ is odd, or $(a,j)=(1,0)$ if $p=2$\\
(D) $a$ is odd and $(a+1)/2 \leq j \leq (p-1)/2$  if $p$ is odd, or $(a,j)=(1,1)$ if $p=2$\\ 

\begin{Lemma}
Let $(a,j) \in \mathcal{P} $. 
The following holds. \\ \\
{\rm (i)} If $(a,j)$ satisfies {\rm (A)}, then $n(a,j)= (p-a-1)/2 +j$. \\
{\rm (ii)} If $(a,j)$ satisfies {\rm (B)}, then $n(a,j)= (p-a-1)/2 -j$. \\
{\rm (iii)} If $(a,j)$ satisfies {\rm (C)}, then $n(a,j)= (2p-a-1)/2 -j$. \\
{\rm (iv)} If $(a,j)$ satisfies {\rm (D)}, then $n(a,j)= j- (a+1)/2 $. \\ \\
On the right-hand side of each equality, $a$ and $j$ are regarded as the 
corresponding integers with 
$0 \leq a \leq p-1$ and $0 \leq j \leq (p-1)/2$ except for $j$ when $p=2$.
\end{Lemma}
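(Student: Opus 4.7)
The plan, for $p$ odd, is to translate divisibility of $\varphi_{a,n}(x)$ by $\psi_j(x+\alpha^2)$, with $\alpha = (a+1)/2 \in \mathbb{F}_p$, into a combinatorial problem about when a sequence of squares in $\mathbb{F}_p$ first overfills a prescribed multiplicity budget. Set $y = x + \alpha^2$ and use $i(i+a+1) = (i+\alpha)^2 - \alpha^2$ to rewrite $\varphi_{a,n}(x) = \prod_{i=0}^{n-1}(y - (i+\alpha)^2)$; for $n = p$ this is $\psi(y)$ by Lemma~4.3\,(i). So divisibility holds iff, for every $c \in \mathbb{F}_p$, the number of $i \in \{0,\ldots,n-1\}$ with $(i+\alpha)^2 = c$ is at most the multiplicity of $(y-c)$ in $\psi_j(y)$. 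Reading this step by step, $n(a,j)$ is the largest $n$ such that no ``budget'' is exceeded as we add factors $(y - (\alpha+t)^2)$ for $t = 0, 1, \ldots, n-1$.

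The second step is to inspect the multiplicities of $\psi_j(y)$. From its defining product, the multiplicity of $(y-c)$ equals $2$ for every $c = v^2$ with $v \in \mathcal{S}\setminus\{0\}$, except at the following values: $c = 0$ has multiplicity $0$ in $\psi_0$ and $1$ in $\psi_j$ for $j \neq 0$; $c = j^2$ has multiplicity $0$ in $\psi_j$ for $j \neq 0$; and $c = -j^2$ has multiplicity $1$ (or $3$ if $-1$ is a quadratic residue mod $p$). Since there are at most two $s \in \mathbb{F}_p$ with $s^2 = c$ (and just one when $c = 0$), the ``small-budget'' values $c = 0$ (for $j \neq 0$) and $c = -j^2$ cannot be the source of first failure: their at most two appearances fit within the budgets $1$ and $3$, respectively. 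Hence failure first occurs exactly when the running square $(\alpha+t)^2$ hits a forbidden value, namely $\alpha + t \equiv 0 \pmod{p}$ when $j = 0$, and $\alpha + t \equiv \pm j \pmod{p}$ when $j \neq 0$. Consequently $n(a,j)$ is the smaller of the non-negative residues $(j-\alpha) \bmod p$ and $(-j-\alpha) \bmod p$ (replaced by $(-\alpha) \bmod p$ when $j=0$).

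The last step is to compute this minimum in each of the four cases. The parity of $a$ places $\alpha$ in one of two regions of $\mathbb{F}_p$: for odd $a$, $\alpha = (a+1)/2 \in \{1, \ldots, (p-1)/2\}$; for even $a$, $\alpha = 0$ (when $a = p-1$) or $\alpha = (p+a+1)/2 \in \{(p+1)/2, \ldots, p-1\}$. Combined with the range of $j$ in each of (A)--(D), this determines whether $j - \alpha$ and $-j - \alpha$ are already non-negative or require a shift by $p$, and which of the two is smaller. Substituting the explicit form of $\alpha$ yields the claimed expressions $(p-a-1)/2 + j$, $(p-a-1)/2 - j$, $(2p-a-1)/2 - j$, and $j - (a+1)/2$ respectively; the boundary case $a = p-1$ (so $\alpha = 0$) is absorbed into (A) and (B). For $p = 2$, the three pairs in $\mathcal{P}$ are handled by direct inspection using the definitions of $E(a,j)$ and $n(a,j)$.

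The main obstacle lies in the second step, namely verifying that the small-budget values $c = 0$ (for $j \neq 0$) and $c = -j^2$ never cause an overflow before the obvious forbidden values are hit. This requires a careful multiplicity analysis of $\psi_j$, including the extra factor coming from $(y - i^2)^2$ when $i^2 = -j^2$ in the case where $-1$ is a quadratic residue mod $p$. Once this is settled, the remaining case split is a routine computation of which of two modular residues is smaller.
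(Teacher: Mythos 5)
Your proof is correct and follows essentially the same route as the paper: set $\alpha = (a+1)/2$, rewrite $\varphi_{a,n}(x)$ as $\prod_{i=0}^{n-1}\bigl(y - (i+\alpha)^2\bigr)$ with $y = x + \alpha^2$, observe that $y - j^2$ is the one linear factor of $\psi(y)$ missing from $\psi_j(y)$, conclude that $n(a,j)$ is the smallest $n \geq 0$ with $(n+\alpha)^2 = j^2$, and finally compute which of the two residues $\pm j - \alpha \bmod p$ is smaller in each of the four cases (A)--(D).

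The one genuine difference is that you make explicit a step the paper leaves implicit. The paper passes from the definition of $n(a,j)$ directly to the assertion that the linear factor $x - n(a,j)\bigl(n(a,j)+a+1\bigr)$ does not divide $\psi_j(x+\alpha^2)$; strictly speaking, the definition of $n(a,j)$ only rules out divisibility of the \emph{product} $\varphi_{a,n(a,j)+1}$, so in principle the failure could arise from a multiplicity overflow at a factor that does appear in $\psi_j(x+\alpha^2)$. Your ``budget'' analysis --- checking that $c=0$ (budget $1$, hit once) and $c=-j^2$ (budget $3$, hit at most twice, when it is a square) never overflow --- is precisely what closes this gap, and it confirms that the first failure is indeed at $c=j^2$ (or $c=0$ when $j=0$). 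Your residue computations, the treatment of the boundary $a=p-1$, and the direct verification for $p=2$ all check out, and they agree with the paper's case analysis.
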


\begin{proof}
It is clear if $p=2$, so we may assume that $p$ is odd. 
We have  
$$\prod_{i \in \mathbb{F}_p} \big(x-i(i+a+1)\big) = \Big( x+\big((a+1)/2\big)^2 \Big) 
\prod_{m \in \mathcal{S}-\{0\}} \Big( x+\big((a+1)/2\big)^2-m^2 \Big)^2$$
since $\varphi_{a,p}(x)= \psi \Big(x+\big((a+1)/2\big)^2\Big)$. Hence 
each factor $x-i(i+a+1)$ with $i \in \mathbb{F}_p$ on the left-hand side has 
the form $x+\big((a+1)/2\big)^2-t^2$ with $t \in \mathcal{S} $. Since 
\begin{eqnarray*}
\psi_j \Big(x+\big((a+1)/2\big)^2\Big)
& = & 2 \Big( x+\big((a+1)/2\big)^2 \Big) \Big( x+\big((a+1)/2\big)^2 +j^2 \Big) \\
&  & \times \prod_{m \in \mathcal{S} - \{0, j\} } \Big( x+\big((a+1)/2\big)^2-m^2 \Big)^2
\end{eqnarray*}
if $j \neq 0$, whereas  
$$\psi_j \Big(x+\big((a+1)/2\big)^2\Big)= \prod_{m \in \mathcal{S}-\{0\}} 
\Big( x+\big((a+1)/2\big)^2-m^2 \Big)^2$$
if $j=0$, it is only $x+\big((a+1)/2\big)^2 -j^2$ among the linear polynomials 
$x+\big((a+1)/2\big)^2 -t^2$ with $t \in \mathcal{S}  $ 
that does not appear as a factor of the polynomial 
$\psi_j \Big(x+\big((a+1)/2\big)^2\Big)$. Moreover, since 
$$x-n(n+a+1) \mid \psi_j \Big(x+\big((a+1)/2\big)^2\Big)$$
for any non-negative integer $n$ with $n < n(a,j)$ and since 
$$x-n(a,j) \big( n(a,j)+a+1 \big) \nmid  \psi_j \Big(x+\big((a+1)/2\big)^2 \Big)$$ 
by the definition of $n(a,j)$, the factor $x-n(a,j) \big( n(a,j)+a+1 \big)$ must be equal to 
$x+\big((a+1)/2\big)^2-j^2$. Therefore, we see that $n(a,j)$ is the smallest non-negative integer $n$ 
satisfying $-n(n+a+1)=\big((a+1)/2\big)^2 -j^2$ in $\mathbb{F}_p$. Note that this implies 
$\big( n(a,j) + (a+1)/2 \big)^2 = j^2$ in $\mathbb{F}_p$ and hence 
$n(a,j) = \pm j- (a+1)/2$ in $\mathbb{F}_p$. 

Suppose that the pair $(a,j)$ satisfies the condition (A). Then we see that 
$n(a,j) = (p-a-1)/2 +j $ or $(3p-a-1)/2 -j $ since 
$n(a,j) \equiv \pm j + (p-a-1)/2$ $({\rm mod}\ p)$ and $(p-a+1)/2 \leq j \leq (p-1)/2$. 
By the minimality of $n(a,j)$ we obtain $n(a,j) = (p-a-1)/2 +j $. 

Suppose that the pair $(a,j)$ satisfies the condition (B). Then we see that 
$n(a,j) = (p-a-1)/2 -j $ or $(p-a-1)/2 +j $ since 
$n(a,j) \equiv \pm j + (p-a-1)/2\ ({\rm mod}\ p)$ and $0 \leq j \leq (p-a-1)/2$. 
By the minimality of $n(a,j)$ we obtain $n(a,j) = (p-a-1)/2 -j $. 

Suppose that the pair $(a,j)$ satisfies the condition (C). Then we see that 
$n(a,j) = (2p-a-1)/2 -j $ or $(2p-a-1)/2 +j $ since 
$n(a,j) \equiv \pm j - (a+1)/2\ ({\rm mod}\ p)$ and $0 \leq j \leq (a-1)/2$. 
By the minimality of $n(a,j)$ we obtain $n(a,j) = (2p-a-1)/2 -j $. 

Suppose that the pair $(a,j)$ satisfies the condition (D). Then we see that 
$n(a,j) = j- (a+1)/2  $ or $(2p-a-1)/2 -j $ since 
$n(a,j) \equiv \pm j - (a+1)/2\ ({\rm mod}\ p)$ and $(a+1)/2 \leq j \leq (p-1)/2$. 
By the minimality of $n(a,j)$ we obtain $n(a,j) = j- (a+1)/2  $. 
\end{proof}

For a pair $(a,j) \in \mathcal{P}$, we set 
$\tilde{n}(a,j)=n(-a,j)$ if $p$ is odd, and 
$$\tilde{n}(0,1/2)=0,\ \ \tilde{n}(1,0)=0,\ \ \tilde{n}(1,1)=1$$
if $p=2$.

\begin{Lemma}
Let $(a,j) \in \mathcal{P} $. 
The following holds. 
\\ \\
{\rm (i)} If $(a,j)$ satisfies {\rm (A)}, then $\tilde{n}(a,j)= (-p+a-1)/2 +j$. \\
{\rm (ii)} If $(a,j)$ satisfies {\rm (B)}, then $\tilde{n}(a,j)= (p+a-1)/2 -j$. \\
{\rm (iii)} If $(a,j)$ satisfies {\rm (C)}, then $\tilde{n}(a,j)= (a-1)/2 -j$. \\
{\rm (iv)} If $(a,j)$ satisfies {\rm (D)}, then $\tilde{n}(a,j)= j+ (a-1)/2 $. \\ \\
On the right-hand side of each equality, $a$ and $j$ are regarded as the 
corresponding integers with 
$0 \leq a \leq p-1$ and $0 \leq j \leq (p-1)/2$ except for $j$ when $p=2$.
\end{Lemma}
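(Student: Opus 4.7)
The plan is to reduce everything to Lemma 4.6 via the definitional identity $\tilde{n}(a,j) = n(-a,j)$ (for $p$ odd). The case $p=2$ is immediate: one simply checks the three pairs $(0,1/2), (1,0), (1,1)$ against the stated formulas using the definitions of $\tilde{n}$. So the content is in the odd $p$ case, and the plan is to go through the four cases (A)--(D) for $(a,j)$, determine which of (A)--(D) the pair $(-a,j)$ falls into, and then invoke the corresponding part of Lemma 4.6.

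The one observation that makes this work is the following: we regard $-a$ as the integer in $\{0,1,\dots,p-1\}$ congruent to it mod $p$, i.e.\ $-a = p - a$ if $a \neq 0$ and $-a = 0$ if $a = 0$. Since $p$ is odd, for $a \neq 0$ the parities of $a$ and of $p - a$ are opposite. So in each case the ``even/odd'' part of the hypothesis flips, and one checks that the range of $j$ translates correctly between the cases:

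\medskip

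\noindent\textbf{(A)} $a$ even with $(p-a+1)/2 \leq j \leq (p-1)/2$. If $a = 0$ the range is empty. If $a \neq 0$, then $-a = p - a$ is odd and the range matches case (D) for $(-a,j)$, so Lemma 4.6 (iv) yields $n(-a,j) = j - (-a+1)/2 = j - (p - a + 1)/2 = (-p + a - 1)/2 + j$.

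\noindent\textbf{(B)} $a$ even with $0 \leq j \leq (p-a-1)/2$. If $a = 0$, then $-a = 0$ is even, and the range matches (B) for $(-a,j)$; Lemma 4.6 (ii) gives $(p-1)/2 - j = (p + a - 1)/2 - j$. If $a \neq 0$, then $-a = p - a$ is odd with $0 \leq j \leq (-a-1)/2$, matching (C) for $(-a,j)$, and Lemma 4.6 (iii) gives $(2p - (p-a) - 1)/2 - j = (p + a - 1)/2 - j$.

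\noindent\textbf{(C)} $a$ odd with $0 \leq j \leq (a-1)/2$. Then $-a = p - a$ is even, and the range is $0 \leq j \leq (p - (p-a) - 1)/2$, matching (B) for $(-a,j)$. Lemma 4.6 (ii) gives $n(-a,j) = (a-1)/2 - j$.

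\noindent\textbf{(D)} $a$ odd with $(a+1)/2 \leq j \leq (p-1)/2$. Then $-a = p - a$ is even with $(p - (p-a) + 1)/2 \leq j \leq (p-1)/2$, matching (A) for $(-a,j)$. Lemma 4.6 (i) gives $n(-a,j) = (p - (p-a) - 1)/2 + j = (a-1)/2 + j$.

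\medskip

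There is no real obstacle here: the only thing to be careful about is the bookkeeping that $-a$ should be interpreted as the integer representative $p-a$ (or $0$), and that this reverses parity. The proof is essentially a case-table translation under Lemma 4.6, and all four identities drop out by direct substitution.
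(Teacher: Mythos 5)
Your proposal is correct and follows essentially the same route as the paper: for $p$ odd and $a \neq 0$ it sets $a' = p-a$, observes that the condition on $(a,j)$ among (A)--(D) flips to the opposite-parity case for $(a',j)$, and then invokes the corresponding part of Lemma 4.6. The only cosmetic difference is that the paper dispenses with $a=0$ up front (noting $(0,j)$ always falls under (B)), whereas you handle it inline within cases (A) and (B); the content is identical.
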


\begin{proof}
It is clear if $p=2$, so we may assume that $p$ is odd. 
If $a=0$, the lemma holds by Lemma 4.6 (ii) since each $(0,j) \in \mathcal{P}$ satisfies (B). 
So we may assume  $a \neq 0$. Then  the element $-a \in \mathbb{F}_p$ corresponds to $p-a$ in the set 
of integers $\{ 0,1, \cdots , p-1 \}$. Set $a' = p-a$. 

Suppose that the pair $(a,j)$ satisfies the condition (A). Then $(a',j)$ satisfies (D) and so  we have 
$n(-a, j)=n(a',j)= j-(p-a+1)/2$ by Lemma 4.6 (iv). 

Suppose that the pair $(a,j)$ satisfies the condition (B). Then $(a',j)$ satisfies (C) and so  we have 
$n(-a, j)=n(a',j)= (p+a-1)/2 -j$ by Lemma 4.6 (iii). 

Suppose that the pair $(a,j)$ satisfies the condition (C). Then $(a',j)$ satisfies (B) and so  we have 
$n(-a, j)=n(a',j)= (a-1)/2 -j$ by Lemma 4.6 (ii). 

Suppose that the pair $(a,j)$ satisfies the condition (D). Then $(a',j)$ satisfies (A) and so  we have 
$n(-a, j)=n(a',j)= (a-1)/2 +j$ by Lemma 4.6 (i). 
\end{proof}

\begin{Lemma}
Let $(a,j) \in \mathcal{P}  $. Then the primitive idempotent $E(a,j)$ can be written as
$$E(a,j)= \mu_a \sum_{m=n(a,j)}^{p-1} c_m Y^m X^m = 
\mu_a \sum_{m=\tilde{n}(a,j)}^{p-1} \tilde{c}_m X^m Y^m$$
for some $c_m, \tilde{c}_m \in \mathbb{F}_p$ with $c_{n(a,j)} \neq 0$ and 
$\tilde{c}_{\tilde{n}(a,j)} \neq 0$. 
\end{Lemma}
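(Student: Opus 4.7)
The plan is to translate both equalities into polynomial statements in $\mathbb{F}_p[x]$ using the basis $\{\varphi_{a,m}(x):0\le m\le p-1\}$ of $\mathbb{F}_p[x]_{<p}$, and then exploit the defining divisibility property of $n(a,j)$ (resp.\ $\tilde n(a,j)$). The case $p=2$ is a direct check: the three explicit formulas $E(0,1/2)=\mu_0$, $E(1,0)=\mu_1 YX$ and $E(1,1)=\mu_1 XY$, together with the companion rewrites $\mu_1 XY=\mu_1(YX+1)$ and $\mu_1 YX=\mu_1(XY+1)$, immediately have the required shape in either basis, with nonzero leading coefficient at index $n(a,j)$ (resp.\ $\tilde n(a,j)$).

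Assume now $p$ is odd and focus on the first equality. Since $YX\in\mathcal{A}$ centralizes $\mathcal{U}^0$ (Proposition 2.4), $YX$ commutes with $\mu_a$, so an easy induction gives $(\mu_a YX)^k=\mu_a(YX)^k$ for $k\ge 1$. Consequently, for every $f(x)\in\mathbb{F}_p[x]$ one has $f(\mu_a YX)\,\mu_a=\mu_a f(YX)$, which applied to the definition of $E(a,j)$ yields
$$E(a,j)=\mu_a P(YX),\qquad P(x):=\psi_j\bigl(x+((a+1)/2)^2\bigr)\in\mathbb{F}_p[x]_{<p}.$$
Lemma 4.2, combined with the observation $\varphi_{a,m}(0)=0$ for $m\ge 1$, gives $\mu_a Y^m X^m=\mu_a\varphi_{a,m}(YX)$ for every $m\ge 0$, so the problem reduces to expanding $P$ in the $\varphi_{a,m}$ basis.

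The polynomials $\varphi_{a,0},\ldots,\varphi_{a,p-1}$ have distinct degrees and so form a basis of $\mathbb{F}_p[x]_{<p}$; for each $n$ the subfamily $\{\varphi_{a,m}:m\ge n\}$ consists of $p-n$ polynomials all divisible by $\varphi_{a,n}$, and hence spans the $(p-n)$-dimensional subspace of degree-$<p$ polynomials divisible by $\varphi_{a,n}$. By definition of $n(a,j)$ one has $\varphi_{a,n(a,j)}\mid P$, so in the expansion $P=\sum_m c_m\varphi_{a,m}$ the coefficients $c_m$ vanish for $m<n(a,j)$. To see $c_{n(a,j)}\ne 0$, set $R(x):=P(x)/\varphi_{a,n(a,j)}(x)$ and evaluate at $\alpha:=n(a,j)(n(a,j)+a+1)$: each summand with $m>n(a,j)$ contains the factor $x-\alpha$ and vanishes there, so $R(\alpha)=c_{n(a,j)}$, which is nonzero by maximality of $n(a,j)$ (the failure $\varphi_{a,n(a,j)+1}\nmid P$ amounts precisely to $(x-\alpha)\nmid R$). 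Translating back produces the desired expression $E(a,j)=\mu_a\sum_{m=n(a,j)}^{p-1} c_m Y^m X^m$.

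The second equality follows by the symmetric argument, starting from the alternative form $E(a,j)=\mu_a\psi_j\bigl(XY+((a-1)/2)^2\bigr)$, using $\mu_a X^m Y^m=\mu_a\varphi_{-a,m}(XY)$ from Lemma 4.2, and applying Lemma 4.3(i) with $a$ replaced by $-a$; the role of $n(a,j)$ is then taken over by $\tilde n(a,j)=n(-a,j)$. The whole argument is essentially bookkeeping and there is no substantial obstacle — the only slightly subtle step is the dictionary $f(\mu_a YX)\mu_a=\mu_a f(YX)$ together with the identification $\mu_a Y^m X^m=\mu_a\varphi_{a,m}(YX)$, after which the rest is elementary linear algebra of polynomial expansions.
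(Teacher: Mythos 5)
Your proof is correct and takes essentially the same approach as the paper's: both factor $\psi_j\bigl(x+((a+1)/2)^2\bigr)$ as $\varphi_{a,n(a,j)}(x)\cdot R(x)$ using the definition of $n(a,j)$, apply Lemma 4.2 to translate into the algebra, and extract the nonvanishing of $c_{n(a,j)}$ from the maximality of $n(a,j)$. Your reformulation in terms of the degree-graded basis $\{\varphi_{a,m}\}_{0\le m\le p-1}$ of $\mathbb{F}_p[x]_{<p}$ and the evaluation $R(\alpha)=c_{n(a,j)}$ is a slightly cleaner bookkeeping device for the paper's appeal to the recursion $\mu_aY^nX^n\cdot\mu_aYX = \mu_aY^{n+1}X^{n+1}+n(n+a+1)\mu_aY^nX^n$, but it is the same idea.
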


\begin{proof}
It is clear when $p=2$, so we may assume that $p$ is odd. 
By the definition of $n(a,j)$ we can write 
$$\psi_{j}\Big(x+\big((a+1)/2\big)^2\Big) = 
\tau_{j} \Big(x+\big((a+1)/2\big)^2\Big) \cdot \varphi_{a, n(a,j)} (x) \eqno(*)$$
for some $\tau_j(x) \in \mathbb{F}_p [x]$.  Then  
\begin{eqnarray*}
E(a,j) & = &  \psi_{j} \Big(\mu_a YX+\big((a+1)/2\big)^2\Big) \cdot \mu_a \\
        & = &  \tau_j \Big(\mu_a YX +\big((a+1)/2\big)^2\Big) \cdot 
                 \varphi_{a,n(a,j)}(\mu_a YX) \cdot \mu_a \\
        & = &  \mu_a Y^{n(a,j)} X^{n(a,j)} \cdot  \tau_j \Big(\mu_a YX +\big((a+1)/2\big)^2\Big).
\end{eqnarray*}
Since 
\begin{eqnarray*}
\lefteqn{\mu_a Y^{n(a,j)} X^{n(a,j)} \cdot \mu_a YX} \\
& = & \mu_a Y^{n(a,j)+1} X^{n(a,j)+1} +n(a,j)\big(n(a,j)+a+1\big) \mu_a Y^{n(a,j)} X^{n(a,j)},
\end{eqnarray*}
we see that $E(a,j)$ can be written as $E(a,j)= \mu_a \sum_{m=n(a,j)}^{p-1} c_m Y^m X^m$ for some $c_m \in \mathbb{F}_p$ with $n(a,j) \leq m \leq p-1$. Note that 
$\tau_{j} \Big(x+\big((a+1)/2\big)^2\Big)$ is a product of some linear polynomials, but 
$x- n(a,j)\big(n(a,j)+a+1\big)$ does not 
appear as a factor of it. 
Thus  we conclude that $c_{n(a,j)} \neq 0$, and the first equality in the lemma is 
proved. 

Similarly, we obtain
$$E(a,j) = \mu_a X^{n(-a,j)} Y^{n(-a,j)} \cdot 
\tau_{j} \Big(\mu_a XY +\big((a-1)/2\big)^2\Big),$$
using the equality $(*)$ where $a$ is replaced by $-a$. Then a similar argument 
in the last paragraph shows the second equality in the lemma.   
\end{proof}

The projective indecomposable $\mathcal{U}_1$-module generated by $E(a,j)$ will be determined in Theorem 5.8  as the result for $r=1$ there.

\section{Primitive idempotents in $\mathcal{U}_r$}
To construct primitive idempotents in $\mathcal{U}_r$ for $r \geq 2$ we need the $k$-linear map 
${\rm Fr}' : \mathcal{U} \rightarrow \mathcal{U}$ which was introduced  in Section 2. Recall that the linear map ${\rm Fr}'$ is defined  by 
$$Y^{(m)} {H \choose n} X^{(m')} \mapsto 
Y^{(mp)} {H \choose np} X^{(m'p)}$$
for $m, m',n \in \mathbb{Z}_{\geq 0}$.

First we construct primitive idempotents in $\mathcal{U}_r^0$.  For $a \in \mathbb{Z}$, define an element 
$\mu_a^{(r)} \in \mathcal{U}_r^0$ as  
$$\mu_a^{(r)} = {H-a-1 \choose p^r-1}.$$
We have $\mu_a^{(1)} = \mu_a$, and $\mu_a^{(r)} = \mu_b^{(r)}$ if and only if 
$a \equiv b \ ({\rm mod}\ p^r)$. Then the following holds. 

\begin{Proposition}
{\rm (i)} Suppose that $r \geq 2$. Then for an integer $a = a_0 +  a'p$ with $0 \leq a_0 \leq p-1$ 
and $a' \in \mathbb{Z}$, we have $\mu_a^{(r)} = \mu_{a_0} {\rm Fr}' (\mu_{a'}^{(r-1)})$. 
In particular, $\mu_a^{(r)}$ is a $\mathcal{U}_r^0$-weight vector of $\mathcal{U}_r^0$-weight $a$. 
\\ \\
{\rm (ii)} The elements $\mu_a^{(r)}$ with $a \in \mathbb{Z} / p^r \mathbb{Z}$ are pairwise orthogonal 
primitive idempotents in $\mathcal{U}_r^0$ whose sum is $1 \in \mathcal{U}_r^0$. \\ \\
{\rm (iii)} $\mu_{a}^{(r)} X^{(m)} = X^{(m)} \mu_{a-2m}^{(r)}$ and 
$\mu_{a}^{(r)} Y^{(m)} = Y^{(m)} \mu_{a+2m}^{(r)}$ 
for $a \in \mathbb{Z}$ and $m \in \mathbb{Z}_{\geq 0}$. 
\end{Proposition}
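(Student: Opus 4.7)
Part (iii) is immediate from Proposition 2.1(ii): $\mu_a^{(r)} X^{(m)} = \binom{H-a-1}{p^r-1}X^{(m)} = X^{(m)}\binom{H-a-1+2m}{p^r-1} = X^{(m)}\mu_{a-2m}^{(r)}$, and the $Y^{(m)}$ case is symmetric.

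For (i), I would exploit the faithful action of $\mathcal{U}^0$ on $\bigoplus_{\lambda \geq 0} k_\lambda$: since $\{\binom{H}{n}\}_{n \geq 0}$ is a $k$-basis of $\mathcal{U}^0$ and $\binom{m}{n}=0$ for $n>m$ while $\binom{m}{m}=1$, evaluating any $\sum_n c_n \binom{H}{n}$ at $\lambda = 0,1,2,\ldots$ yields a unitriangular system forcing all $c_n = 0$. Hence it suffices to check that the two sides of $\mu_a^{(r)} = \mu_{a_0}\,{\rm Fr}'(\mu_{a'}^{(r-1)})$ act by the same scalar on each $k_\lambda$. Writing $\lambda = \lambda_0 + \lambda'p$ with $0 \leq \lambda_0 \leq p-1$, the LHS gives $\binom{\lambda-a-1}{p^r-1}$. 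For the RHS: expanding $\binom{H-a'-1}{p^{r-1}-1} = \sum_i \binom{-a'-1}{p^{r-1}-1-i}\binom{H}{i}$ via Proposition 2.1(v), applying ${\rm Fr}'$ termwise to turn each $\binom{H}{i}$ into $\binom{H}{ip}$, and using Proposition 2.2 to evaluate $\binom{\lambda}{ip}\equiv \binom{\lambda'}{i}\pmod p$, Vandermonde convolution collapses the sum to $\binom{\lambda'-a'-1}{p^{r-1}-1}$; combined with $\mu_{a_0}|_{k_\lambda} = \binom{\lambda-a_0-1}{p-1}$, the RHS scalar equals $\binom{\lambda-a_0-1}{p-1}\binom{\lambda'-a'-1}{p^{r-1}-1}$. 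Matching the two scalars via Proposition 2.2 reduces to a short case analysis: when $\lambda_0 = a_0$, $\lambda-a-1 = (p-1) + (\lambda'-a'-1)p$ and both sides equal $\binom{\lambda'-a'-1}{p^{r-1}-1}$ (using $\binom{-1}{p-1}\equiv 1 \pmod p$); otherwise the low base-$p$ digit of $\lambda-a-1$ is not $p-1$ (killing the LHS by Lucas) and simultaneously $\mu_{a_0}|_{k_\lambda} = 0$ (killing the RHS). The weight assertion then follows since $\mu_a^{(r)}$ visibly lies in $\mathcal{U}_r^0$ and (iii) identifies its weight as $a$. The main obstacle is organizing this case analysis cleanly, but each subcase is elementary once base-$p$ digits are tracked.

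For (ii), I would induct on $r$; the base case $r=1$ is Proposition 4.1(ii). For $r \geq 2$, iterating (i) and using that ${\rm Fr}'^0 : \mathcal{U}^0 \to \mathcal{U}^0$ is an algebra homomorphism gives $\mu_a^{(r)} = \prod_{i=0}^{r-1}{\rm Fr}'^i(\mu_{a_i})$ for the base-$p$ expansion $a = \sum_i a_i p^i$. Combining commutativity of $\mathcal{U}^0$ (Proposition 2.1(iv)) with Proposition 4.1(ii), the products factor across the commuting subalgebras ${\rm Fr}'^i(\mathcal{U}_1^0)$:
$$\mu_a^{(r)} \mu_b^{(r)} = \prod_i {\rm Fr}'^i(\mu_{a_i}\mu_{b_i}) = \delta_{a,b}\,\mu_a^{(r)}, \qquad \sum_{a \in \mathbb{Z}/p^r\mathbb{Z}} \mu_a^{(r)} = \prod_i {\rm Fr}'^i\!\left(\sum_{c \in \mathbb{F}_p} \mu_c\right) = 1.$$
Each $\mu_a^{(r)}$ is nonzero, acting as $\binom{-1}{p^r-1} \equiv 1 \pmod p$ on $k_a$; since $\mathcal{U}_r^0$ is commutative of dimension $p^r$, having $p^r$ pairwise orthogonal nonzero idempotents summing to $1$ forces each to be primitive.
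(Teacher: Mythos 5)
The paper gives no proof of Proposition 5.1: it simply cites Gros--Kaneda (4.7 and 4.8 of \cite{gros-kaneda15}) for $p$ odd and asserts the $p=2$ case. Your argument is therefore a self-contained reconstruction rather than a parallel of the paper's own reasoning, and it is correct. The route via the faithful action of $\mathcal{U}^0$ on $\bigoplus_{\lambda\geq 0}k_\lambda$ (unitriangularity of $(\binom{\lambda}{n})$) is a clean way to establish the identity $\mu_a^{(r)}=\mu_{a_0}\,{\rm Fr}'(\mu_{a'}^{(r-1)})$, and the resulting factorization $\mu_a^{(r)}=\prod_{i}{\rm Fr}'^i(\mu_{a_i})$ together with the algebra-homomorphism property of ${\rm Fr}'^0$ makes (ii) essentially formal; the dimension count $p^r$ for primitivity is the standard one. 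One small slip: you say the weight assertion follows "since (iii) identifies its weight as $a$," but (iii) is a commutation relation with $X^{(m)},Y^{(m)}$ and says nothing about the $\binom{H}{m}$-eigenvalue. The weight is instead read off from the same $k_\lambda$ computation you already did (namely $\mu_a^{(r)}$ acts on $k_\lambda$ by $\binom{\lambda-a-1}{p^r-1}$, which by Lucas is nonzero exactly when $\lambda\equiv a\pmod{p^r}$, and then $\binom{\lambda}{m}=\binom{a}{m}$ for $0\leq m\leq p^r-1$), or equivalently by the same short induction you use for (ii), applying $\binom{H}{m_0+m'p}=\binom{H}{m_0}\binom{H}{m'p}$, Proposition 4.1(i) to $\mu_{a_0}$, and the inductive hypothesis to ${\rm Fr}'(\mu_{a'}^{(r-1)})$.
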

\noindent For details, see 
4.7 and 4.8 in \cite{gros-kaneda15} if $p$ is odd, but  the proposition also holds 
for $p=2$. 

If a pair $(a,j) \in \mathcal{P}$ satisfies (A) or (C), set $s(a,j) =(p-a+1)/2$ if $p$ is odd and 
$a$ is even,  $s(a,j) =(p-a)/2$ if $p$ is odd and 
$a$ is odd, and $s(a,j)=1$ if $p=2$, regarding $a$ as the corresponding integer with 
$0 \leq a \leq p-1$.  

The following lemma will be used later. 

\begin{Lemma}
{\rm (i)} Let $a,b \in \mathbb{Z}$ with $0 \leq a \leq b \leq p-1$. Then 
$$\mu_a {\rm Fr}'(z_1) {\rm Fr}'(z_2) X^b = \mu_a {\rm Fr}' (z_1 z_2) X^b$$
for any $z_1, z_2 \in \mathcal{U}$. \\ \\
{\rm (ii)} Suppose that a pair $(a,j) \in  \mathcal{P} $ satisfies 
{\rm (A)} or {\rm (C)}.  Let $m,n \in \mathbb{Z}_{\geq 0}$ with $ n(a,j) \leq m \leq p-1$. 
Then $X^{(np)}$ and $Y^{(np)}$ commute with $\mu_a Y^m X^{m-s(a,j)}$, whereas 
$$\mu_a Y^m X^{m-s(a,j)} {H \choose np} = 
\bigg( {H \choose np} + {H \choose (n-1)p} \bigg) \mu_a Y^m X^{m-s(a,j)} $$ 
if we define ${H \choose t} =0$ for $t<0$. 
\end{Lemma}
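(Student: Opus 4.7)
The plan for part (i) is to reduce by bilinearity to the case where $z_1, z_2$ are PBW monomials $Y^{(a_i)} h_i X^{(c_i)}$ with $h_i \in \mathcal{U}^0$, and then expand ${\rm Fr}'(z_1) {\rm Fr}'(z_2)$ by applying Proposition 2.1(i) to the central product $X^{(c_1 p)} Y^{(a_2 p)}$. The resulting sum is indexed by $i = \alpha + \beta p$ with $0 \leq \alpha \leq p-1$. Terms with $\alpha = 0$ reassemble exactly into ${\rm Fr}'(z_1 z_2)$, which I would verify via the mod-$p$ identities ${Np \choose lp} \equiv {N \choose l}$ and ${Np \choose k} \equiv 0$ for $p \nmid k$ (equivalently $(1+T)^{Np} \equiv (1+T^p)^N$); these extend Proposition 2.2 to any integer $N$.

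It remains to show that each error term with $1 \leq \alpha \leq p-1$ is killed by the sandwich $\mu_a \cdots X^b$. Pushing $\mu_a$ to the right via Proposition 4.1(iii) and the commutativity of $\mathcal{U}^0$, it lands on the central binomial ${H - (c_1+a_2)p + 2i \choose i}$; the evaluation rule $f(H)\mu_b = f(b)\mu_b$ together with Proposition 2.2 collapses it to the scalar ${a - (c_1+a_2)p \choose i} \equiv {a \choose \alpha}{-(c_1+a_2) \choose \beta} \pmod p$, which vanishes whenever $\alpha > a$. Independently, the trailing product $X^{(c_1 p - i)} {\rm Fr}'(h_2) X^{(c_2 p)} X^b$, after commuting the shifted ${\rm Fr}'(h_2)$ past the $X$'s, produces an $X$-coefficient ${(c_1+c_2)p - i + b \choose c_2 p + b}$; its digit expansion via Proposition 2.2 vanishes precisely when $\alpha \leq b$. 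Under the hypothesis $0 \leq a \leq b \leq p-1$, every $\alpha \in \{1,\ldots,p-1\}$ satisfies $\alpha \leq a \leq b$ or $\alpha > a$, so at least one of the two scalars is zero.

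For part (ii), since $\mu_a \in \mathcal{U}_1^0 \subset \mathcal{A}_1$, Proposition 2.5 gives $X^{(np)} \mu_a = \mu_a X^{(np)}$ and $Y^{(np)} \mu_a = \mu_a Y^{(np)}$, so the first two commutations reduce to the corresponding identities for $Y^{(m)} X^{(m-s)}$ with $s = s(a,j)$. For $Y^{(np)}$, Proposition 2.1(i) yields error terms with coefficient ${m + np - i \choose np - i}$; Proposition 2.2 factors this as ${m-i \choose p - i}{n \choose n-1}$, and $m \leq p-1$ forces the first factor to vanish for $i \geq 1$. For $X^{(np)}$, the analogous expansion has two kinds of error terms: for $1 \leq i \leq m-s$ the $X$-coefficient ${np - i + m - s \choose m - s}$ vanishes by the same digit argument, whereas for $m - s < i \leq m$ one pushes $\mu_a$ past $Y^{(m-i)}$ and evaluates the $H$-binomial to the scalar ${a + m - np \choose i}$. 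The explicit forms of $n(a,j)$ and $s(a,j)$ in Lemma 4.6 (cases (A), (C)) yield $a + m \geq p$ and $p - s \geq a$, so by Proposition 2.2 this reduces mod $p$ to ${a + m - p \choose i}$, which vanishes since $i > m - s \geq a + m - p$.

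For the ${H \choose np}$ identity, push ${H \choose np}$ leftward through $X^{(m-s)}$ and $Y^{(m)}$ by Proposition 2.1(ii); the combined shifts accumulate to $+2s$, yielding ${H + 2s \choose np} Y^{(m)} X^{(m-s)}$. Left-multiplying by $\mu_a$ and using the Vandermonde identity in $\mathcal{U}^0$ (equivalently $f(H)\mu_a = f(a)\mu_a$), the task reduces to the numerical congruence ${a + 2s \choose np} \equiv {a \choose np} + {a \choose (n-1)p} \pmod p$: in case (A) one has $a + 2s = p + 1$ and in case (C) $a + 2s = p$, and a short Lucas computation verifies both sides equal $1$ for $n \in \{0,1\}$ and $0$ for $n \geq 2$, using $0 \leq a < p$ and the convention ${H \choose t} = 0$ for $t < 0$. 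The main technical obstacle throughout is the digit/Lucas bookkeeping via the extensions of Proposition 2.2, which must precisely match two distinct failure modes (left factor versus right factor annihilation) against the twin hypotheses $a \leq b$ in (i) and $m \geq n(a,j)$ in (ii).
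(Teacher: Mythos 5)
Your proof of part (i) is essentially the paper's: same reduction to PBW monomials, same expansion of $X^{(c_1p)}Y^{(a_2p)}$ via Proposition 2.1(i), same indexing by $i=\alpha+\beta p$, and the same dichotomy on the last digit $\alpha$ using $a\le b$ (the $H$-binomial gives ${a\choose\alpha}$ which vanishes for $\alpha>a$, the $X$-coefficient gives a factor whose last Lucas digit is ${b-\alpha\choose b}$ which vanishes for $1\le\alpha\le b$). Likewise the two commutation claims in (ii) match the paper: for $Y^{(np)}$ the $Y$-coefficient ${m+np-i\choose m}$ dies by Lucas for all $1\le i\le m-s$, and for $X^{(np)}$ the $X$-coefficient kills $1\le i\le m-s$ while the $\mu_a$-evaluated $H$-binomial ${a+m-np\choose i}\equiv{a+m-p\choose i}$ kills $i>m-s\ge a+m-p$, and you correctly verify $a+m\ge p$ and $p-s\ge a$ from Lemma 4.6.

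However, your argument for the $\binom{H}{np}$ identity in (ii) has a genuine gap. After arriving at $\mu_a\binom{H+2s}{np}Y^mX^{m-s}$ you say that ``$f(H)\mu_a=f(a)\mu_a$'' reduces the claim to the scalar congruence $\binom{a+2s}{np}\equiv\binom{a}{np}+\binom{a}{(n-1)p}\pmod p$. This step is not valid: the evaluation $f(H)\mu_a=f(a)\mu_a$ holds only for $f$ a polynomial in $H$ of degree $<p$ (i.e.\ in $\mathcal U_1^0\cong k[H]/(H^p-H)$), whereas $\binom{H}{np}$ and $\binom{H+2s}{np}$ for $n\ge 1$ involve the higher generators $\binom{H}{p^i}$ of $\mathcal U^0$ and are \emph{not} scalarized by $\mu_a$. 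Indeed the target identity keeps $\binom{H}{np}$ and $\binom{H}{(n-1)p}$ as genuine operators on the right-hand side; checking it only ``at $H=a$'' is not enough, one would have to check it for every integer congruent to $a$ modulo $p$. The paper's proof handles this correctly: it expands $\binom{H+2s}{np}=\sum_{l=0}^{np}\binom{2s}{l}\binom{H}{np-l}$ by Proposition 2.1(v), truncates to $l\le p-1$ since $2s\le p-1$, factors $\binom{H}{np-l}=\binom{H}{p-l}\binom{H}{(n-1)p}$ inside $\mathcal U^0$ for $l\ge 1$, and then applies $\mu_a$ only to the low-digit factor $\binom{H}{p-l}$ to get the scalar $\binom{a}{p-l}$. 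The final numerical computation is then $\sum_{l=1}^{p-1}\binom{2s}{l}\binom{a}{p-l}=1$ in $\mathbb F_p$ (via Vandermonde, using $a+2s=p$ or $p+1$), while the operator $\binom{H}{(n-1)p}$ survives unevaluated. You should replace your scalar-congruence step with this Vandermonde-plus-digit-factorization argument, or equivalently verify the congruence for \emph{all} $\lambda\equiv a\pmod p$ using Lucas and Pascal's rule rather than just for $\lambda=a$.
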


\begin{proof}
(i) Without loss of generality we may assume that 
$z_j=Y^{(m_j)} {H \choose n_j} X^{(m_j')} $ 
with $m_j, m_j', n_j \in \mathbb{Z}_{\geq 0}$ and $j \in \{ 1,2 \}$. For simplicity we set 
$m = m_2$, $m'= m_1'$, 
${\bf Y} = Y^{(m_1)}$, ${\bf X} = X^{(m_2')}$, ${\bf H}_1 = {H \choose n_1}$ and 
${\bf H}_2 = {H \choose n_2}$. Then we can write $z_1= {\bf Y} {\bf H}_1 X^{(m')}$ 
and $z_2= Y^{(m)} {\bf H}_2 {\bf X}$. For each $l \in \mathbb{Z}_{\geq 0}$ there exist 
${\bf H}_{1,l}$, ${\bf H}_{2,l} \in \mathcal{U}^0$ such that 
${\bf H}_1 Y^{(l)} = Y^{(l)} {\bf H}_{1,l}$ and $X^{(l)} {\bf H}_2 = {\bf H}_{2,l} X^{(l)}$. 
Then we have 
\begin{eqnarray*}
\lefteqn{\mu_a  {\rm Fr}' (z_1 z_2) X^b} \\
& = & \mu_a {\rm Fr}' ({\bf Y} {\bf H}_1 X^{(m')} Y^{(m)} {\bf H}_2 {\bf X}) X^b \\
& = &  \mu_a {\rm Fr}' \bigg( {\bf Y} {\bf H}_1 \sum_{i=0}^{{\rm min}(m,m')} Y^{(m-i)} 
{H-m-m'+2i \choose i} X^{(m'-i)}  {\bf H}_2 {\bf X} \bigg) X^b \\
& = & \sum_{i=0}^{{\rm min}(m,m')}  \mu_a {\rm Fr}' \bigg( {\bf Y} Y^{(m-i)}  {\bf H}_{1, m-i}
{H-m-m'+2i \choose i}  {\bf H}_{2,m'-i} X^{(m'-i)}  {\bf X} \bigg) X^b.
\end{eqnarray*}
On the other hand, we have 
\begin{eqnarray*}
\lefteqn{\mu_a  {\rm Fr}' (z_1) {\rm Fr}'( z_2) X^b} \\
& = & \mu_a {\rm Fr}'^{-}({\bf Y}) {\rm Fr}'^{0}({\bf H}_1) {\rm Fr}'^{+}(X^{(m')}) 
{\rm Fr}'^{-}(Y^{(m)})  {\rm Fr}'^{0}({\bf H}_2) {\rm Fr}'^{+}({\bf X}) X^b \\
& = & \mu_a {\rm Fr}'^{-}({\bf Y}) {\rm Fr}'^{0}({\bf H}_1) X^{(m'p)} 
Y^{(mp)}  {\rm Fr}'^{0}({\bf H}_2) {\rm Fr}'^{+}({\bf X}) X^b \\
& = & \sum_{i=0}^{{\rm min}(mp,m'p)}  {\rm Fr}'^{-}({\bf Y}) {\rm Fr}'^{0}({\bf H}_1) 
\cdot \mu_a Y^{(mp-i)} {H-(m+m')p +2i \choose i} \\
&  & \times X^{(m'p-i)} {\rm Fr}'^{0}({\bf H}_2) {\rm Fr}'^{+}({\bf X}) X^b. 
\end{eqnarray*}
In this sum we consider the $i$-th summand. Choose a unique integer $i'$ satisfying 
$i \equiv i'\ ({\rm mod}\ p)$ and $0 \leq i' \leq p-1$. Then we can write 
$i = i' +  i''p$ for some $i'' \in \mathbb{Z}_{\geq 0}$. Suppose that $i' \neq 0$. We have 
\begin{eqnarray*}
\mu_a Y^{(mp-i)} {H-(m+m')p+2i \choose i} 
& = & \mu_{a}  {H+(m-m')p \choose i} Y^{(mp-i)}  \\
& = & \mu_{a}  \sum_{l=0}^{i} {(m-m')p \choose l} {H \choose i-l} Y^{(mp-i)} \\
& = & \mu_{a}  \sum_{l=0}^{i''} {(m-m')p \choose lp} {H \choose i-lp} Y^{(mp-i)} \\
& = & \mu_{a}  \sum_{l=0}^{i''} {(m-m')p \choose lp} 
{H \choose i'} {H \choose (i''-l)p} Y^{(mp-i)} \\
& = &  \mu_{a}  \sum_{l=0}^{i''} {(m-m')p \choose lp} 
{a \choose i'} {H \choose (i''-l)p} Y^{(mp-i)}. 
\end{eqnarray*}  
On the other hand, if we choose ${\bf H}_3 \in \mathcal{U}^0$ such that 
$X^{(m'p-i)} {\rm Fr}'^0({\bf H}_2) = {\bf H}_3 X^{(m'p-i)}$, then  
\begin{eqnarray*}
X^{(m'p-i)} {\rm Fr}'^0({\bf H}_2) {\rm Fr}'^{+} ({\bf X}) X^b 
& = & {\bf H}_3 X^{(m'p-i)} {\rm Fr}'^{+} ({\bf X}) X^b \\
& = & {\bf H}_3 {\rm Fr}'^{+} ({\bf X}) X^{(m'p-i)}  X^b \\
& = & b ! {(m'-i'')p+b-i' \choose b} {\bf H}_3  {\rm Fr}'^{+} ({\bf X}) X^{(m'p -i +b)}.
\end{eqnarray*} 
But either 
$a < i'$ or $b \geq i'$ holds since $a \leq b$, and so either 
$\mu_a Y^{(mp-i)} {H-(m+m')p+2i \choose i} $ or 
$X^{(m'p-i)} {\rm Fr}'^0({\bf H}_2) {\rm Fr}'^{+} ({\bf X}) X^b $ must be zero. 
Therefore, the $i$-th summand in the equality of $\mu_a  {\rm Fr}' (z_1) {\rm Fr}'( z_2) X^b$ 
is zero if $p$ does not divide $i$ and we have  
\begin{eqnarray*}
\lefteqn{\mu_a  {\rm Fr}' (z_1) {\rm Fr}'( z_2) X^b} \\
& = & \sum_{i=0}^{{\rm min}(mp,m'p)}  {\rm Fr}'^{-}({\bf Y}) {\rm Fr}'^{0}({\bf H}_1) 
\cdot \mu_a Y^{(mp-i)} {H-(m+m')p +2i \choose i} \\
&  & \times X^{(m'p-i)} {\rm Fr}'^{0}({\bf H}_2) {\rm Fr}'^{+}({\bf X}) X^b \\
& = & \sum_{i=0}^{{\rm min}(m,m')}  {\rm Fr}'^{-}({\bf Y}) {\rm Fr}'^{0}({\bf H}_1) 
\cdot \mu_a Y^{(mp-ip)} {H-(m+m'-2i)p  \choose ip} \\
&  & \times X^{(m'p-ip)} {\rm Fr}'^{0}({\bf H}_2) {\rm Fr}'^{+}({\bf X}) X^b.
\end{eqnarray*}
It is easy to see that 
$${H-(m+m'-2i)p \choose ip} 
 =  {\rm Fr}'^0 \bigg( {H-(m+m')+2i \choose i} \bigg)$$
by using Proposition 2.1 (v). Recall also that the restriction maps ${\rm Fr}'^{\geq 0}, {\rm Fr}'^{\leq 0},{\rm Fr}'^{0},{\rm Fr}'^{+}$ and ${\rm Fr}'^{-}$ are homomorphisms of $k$-algebras. Therefore, we obtain 
\begin{eqnarray*}
\lefteqn{\mu_a  {\rm Fr}' (z_1) {\rm Fr}'( z_2) X^b} \\
& = & \sum_{i=0}^{{\rm min}(m,m')}  \mu_a 
{\rm Fr}'^{-}({\bf Y}) {\rm Fr}'^{0}({\bf H}_1) 
{\rm Fr}'^{-}(Y^{(m-i)}) {\rm Fr}'^0 \bigg( {H-(m+m') +2i  \choose i} \bigg) \\
&  & \times {\rm Fr}'^{+} (X^{(m'-i)}) {\rm Fr}'^{0}({\bf H}_2) 
{\rm Fr}'^{+}({\bf X}) X^b \\
& = & \sum_{i=0}^{{\rm min}(m,m')}  \mu_a 
{\rm Fr}'^{\leq 0} \bigg( {\bf Y} {\bf H}_1 Y^{(m-i)} {H-(m+m') +2i  \choose i} \bigg) 
{\rm Fr}'^{\geq 0} (X^{(m'-i)} {\bf H}_2 {\bf X}) X^b \\
& = & \sum_{i=0}^{{\rm min}(m,m')}  \mu_a 
{\rm Fr}'^{\leq 0} \bigg( {\bf Y} Y^{(m-i)} {\bf H}_{1, m-i}  
{H-(m+m') +2i  \choose i} \bigg) \\ 
&  & \times {\rm Fr}'^{\geq 0} ({\bf H}_{2,m'-i} X^{(m'-i)}  {\bf X}) X^b \\
& = & \sum_{i=0}^{{\rm min}(m,m')}  \mu_a 
{\rm Fr}'^{-}({\bf Y} Y^{(m-i)})
{\rm Fr}'^{0} \bigg(  {\bf H}_{1, m-i}  
{H-(m+m') +2i  \choose i} \bigg) {\rm Fr}'^0 ({\bf H}_{2,m'-i} )\\ 
&  & \times {\rm Fr}'^{+} (X^{(m'-i)}  {\bf X}) X^b \\
& = & \sum_{i=0}^{{\rm min}(m,m')}  \mu_a 
{\rm Fr}'^{-}({\bf Y} Y^{(m-i)})
{\rm Fr}'^{0} \bigg(  {\bf H}_{1, m-i}  
{H-(m+m') +2i  \choose i}  {\bf H}_{2,m'-i} \bigg) \\ 
&  & \times {\rm Fr}'^{+} (X^{(m'-i)}  {\bf X}) X^b \\
& = & \sum_{i=0}^{{\rm min}(m,m')}  \mu_a 
{\rm Fr}'  \bigg( {\bf Y} Y^{(m-i)}  {\bf H}_{1, m-i}  
{H-(m+m') +2i  \choose i}  {\bf H}_{2,m'-i} X^{(m'-i)}  {\bf X} \bigg) X^b. \\
\end{eqnarray*}
The last sum is equal to $\mu_a  {\rm Fr}' (z_1  z_2) X^b$, and (i) is proved. 

(ii) It is easy for $p=2$, since $\mu_a Y^m X^{m-s(a,j)}=\mu_1 Y$ in this situation. 
So we may assume that $p$ is odd. Set $s=s(a,j)$. It is clear if $n=0$, so we may assume that $n>0$. 
We have 
\begin{eqnarray*}
X^{(np)} \mu_a Y^m X^{m-s} 
& = &  \mu_a X^{(np)} Y^m X^{m-s} \\
& = & m! \mu_a \sum_{i=0}^{m} Y^{(m-i)} {H -np-m+2i \choose i} X^{(np-i)} X^{m-s} \\
& = & m! \mu_a \sum_{i=0}^{m} {H -np+m \choose i} Y^{(m-i)}  X^{(np-i)} X^{m-s} \\
& = & m! \mu_a \sum_{i=0}^{m} {a -np+m \choose i} Y^{(m-i)}  X^{m-s} X^{(np-i)},  
\end{eqnarray*}
where the last equality follows from $i \leq p-1$. Consider a summand for $i \neq 0$ 
of this sum. Since $X^{m-s} X^{(np-i)} = (m-s) ! {m-s+np-i \choose np-i} X^{(m-s+np-i)}$,  
we must have $i>m-s$ if 
$X^{m-s} X^{(np-i)}  \neq 0$. On the other hand, we must have $a +m -p \geq i$ 
if ${a -np +m \choose i} \neq 0$ in $\mathbb{F}_p$ since 
$p \leq a+m \leq 2p-2$. But 
the inequalities $i > m-s$ and $a +m -p \geq i$ 
do not hold simultaneously since $(a+m-p)-(m-s) = a+s-p$ is not positive. Therefore, each  summand for $i \neq 0$ must be zero 
and we obtain $X^{(np)} \mu_a Y^m X^{m-s} = \mu_a Y^m X^{m-s} X^{(np)}$. 

In turn, we have 
$$\mu_a Y^m X^{m-s} Y^{(np)} = \sum_{i=0}^{m-s } (m-s)! \mu_a Y^m Y^{(np-i)} 
{H-np-m+s+2i \choose i} X^{(m-s-i)}.$$
However, on the right-hand side, only the summand for $i=0$ survives. 
Indeed, if $m>s$, we see that  
$$Y^m Y^{(np-i)} = m! {m-i+np \choose m} Y^{(m-i+np)} =0$$
for $1 \leq i \leq m-s$, since ${m-i+np \choose m} =0$ in $\mathbb{F}_p$.  
Therefore, we obtain  
\begin{eqnarray*}
\mu_a Y^m X^{m-s} Y^{(np)} 
& = & \mu_a Y^m Y^{(np)} X^{m-s} \\
& = & Y^{(np)} \mu_a Y^m X^{m-s}.
\end{eqnarray*}
Finally, observe that 
\begin{eqnarray*}
\mu_a Y^m X^{m-s} {H \choose np} 
& = & \mu_a {H+2s \choose np} Y^m X^{m-s} \\
& = & \mu_a \sum_{l=0}^{np} {2s \choose l} {H \choose np-l} Y^m X^{m-s}. 
\end{eqnarray*}
Since $2s \leq p-1$, we have 
\begin{eqnarray*}
 \mu_a \sum_{l=0}^{np} {2s \choose l} {H \choose np-l} 
& = & \mu_a \sum_{l=0}^{p-1} {2s \choose l} {H \choose np-l} \\
& = & \mu_a \bigg( {H \choose np} +
\sum_{l=1}^{p-1} {2s \choose l} {H \choose p-l} {H \choose (n-1)p} \bigg)   \\
& = & \mu_a \bigg( {H \choose np} +
\sum_{l=1}^{p-1} {2s \choose l} {a \choose p-l} {H \choose (n-1)p} \bigg).
\end{eqnarray*}
But it is easy to see that $\sum_{l=1}^{p-1} {2s \choose l} {a \choose p-l} =1$ 
in $\mathbb{F}_p$, and we obtain 
$$\mu_a Y^m X^{m-s} {H \choose np} 
=  \bigg( {H \choose np} + {H \choose (n-1)p} \bigg) \mu_a Y^m X^{m-s},$$ 
the proof is complete. 
\end{proof}

Now we shall construct primitive idempotents in $\mathcal{U}_r$. Suppose that $r \geq 2$. 
Recall from Lemma 4.8 that each primitive idempotent $E(a,j)$ in $\mathcal{U}_1$ can be written as 
$$E(a,j)= \mu_a \sum_{m=n(a,j)}^{p-1} c_m Y^m X^m = 
\mu_a \sum_{m=\tilde{n}(a,j)}^{p-1} \tilde{c}_m X^m Y^m$$
for some $c_m, \tilde{c}_m \in \mathbb{F}_p$ with $c_{n(a,j)} \neq 0$ and 
$\tilde{c}_{\tilde{n}(a,j)} \neq 0$. For each $z \in \mathcal{U}$, using the above notation 
we define an element 
$Z\big(z; (a,j)\big) \in \mathcal{U}$ as follows. \\ \\
$\cdot$ If the pair $(a,j)$ satisfies (A) or (C), then 
$$Z\big(z; (a,j)\big) = \mu_a \sum_{m=n(a,j)}^{p-1} c_m 
Y^m X^{m-s(a,j)} {\rm Fr}' (z) X^{s(a,j)}.$$
 \\ 
$\cdot$ If the pair $(a,j)$ satisfies (B) or (D), then 
$$Z\big(z; (a,j)\big) = {\rm Fr}' (z)  E(a,j).$$

\begin{Lemma}
For a pair $(a,j) \in \mathcal{P}$ and a nonzero element $z \in \mathcal{U}$, there exists a nonzero element 
$z' \in \mathcal{U}$ such that $Z(z; (a,j)) =  {\rm Fr}' (z')  E(a,j) 
= E(a,j)  {\rm Fr}' (z') $. Moreover, 
if $z \in \mathcal{A}$, then $Z(z; (a,j))$ also lies in $\mathcal{A}$. 
\end{Lemma}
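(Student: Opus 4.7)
The plan is to split the proof according to whether $(a,j)$ satisfies (A)/(C) or (B)/(D). The preliminary observation used in both cases is that \emph{for any $w \in \mathcal{U}$, the element ${\rm Fr}'(w)$ commutes with every element of $\mathcal{A}_1$}: indeed, ${\rm Fr}'(w)$ is a $k$-linear combination of basis elements $Y^{(mp)} {H \choose np} X^{(m'p)}$, whose outer factors commute with $\mathcal{A}_1$ by Proposition 2.5, while ${H \choose np} \in \mathcal{U}^0 \subseteq \mathcal{A}$ commutes with $\mathcal{A}_1 \subseteq \mathcal{A}$ by the commutativity of $\mathcal{A}$ (Proposition 2.4(i)). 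Hence, once a $z'$ is produced, the identity ${\rm Fr}'(z') E(a,j) = E(a,j) {\rm Fr}'(z')$ is automatic. In cases (B) and (D) the definition is already $Z(z;(a,j)) = {\rm Fr}'(z) E(a,j)$, so one takes $z' := z$, which is nonzero by hypothesis, and the two-sided identity follows from the preliminary observation.

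For cases (A) and (C) the plan is to introduce the $k$-linear map $\sigma : \mathcal{U} \to \mathcal{U}$ defined on the PBW basis by
$$\sigma\bigg(Y^{(m)} {H \choose n} X^{(m')}\bigg) = Y^{(m)} \bigg({H \choose n} + {H \choose n-1}\bigg) X^{(m')},$$
with the convention ${H \choose -1} := 0$, and to prove the intertwining relation
$$\mu_a Y^m X^{m-s(a,j)} \cdot {\rm Fr}'(z) = {\rm Fr}'(\sigma(z)) \cdot \mu_a Y^m X^{m-s(a,j)}$$
valid for all $z \in \mathcal{U}$ and all $m$ with $n(a,j) \le m \le p-1$. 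By $k$-linearity it suffices to check the relation on a single basis vector $Y^{(m_1)} {H \choose n_1} X^{(m_1')}$, where Lemma 5.2(ii) allows $Y^{(m_1 p)}$ and $X^{(m_1' p)}$ to slide through $\mu_a Y^m X^{m-s(a,j)}$ unchanged while it converts ${H \choose n_1 p}$ into ${H \choose n_1 p} + {H \choose (n_1-1)p}$, which is precisely ${\rm Fr}'$ applied to ${H \choose n_1} + {H \choose n_1 - 1}$. Applying this relation termwise inside the sum defining $Z(z;(a,j))$ and collapsing $X^{m-s(a,j)} X^{s(a,j)} = X^m$ reduces the expression to
$${\rm Fr}'(\sigma(z)) \cdot \mu_a \sum_{m=n(a,j)}^{p-1} c_m Y^m X^m = {\rm Fr}'(\sigma(z))\, E(a,j),$$
so $z' := \sigma(z)$ is the required element. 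Its nonvanishing follows from the injectivity of $\sigma$: for any fixed pair $(m, m')$, look at the largest $N$ for which $Y^{(m)} {H \choose N} X^{(m')}$ has nonzero coefficient in $z$; that coefficient is preserved in $\sigma(z)$ because there is no $Y^{(m)} {H \choose N+1} X^{(m')}$ term to contribute.

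The final claim about $\mathcal{A}$ is then routine: $\mathcal{A}$ equals the degree-$0$ subspace of $\mathcal{U}$ by Proposition 2.4(ii); both $z \mapsto z$ and $\sigma$ preserve the $(m, m')$-indices of the basis and hence preserve degree $0$, while ${\rm Fr}'$ multiplies degrees by $p$, so ${\rm Fr}'(z') \in \mathcal{A}$ whenever $z \in \mathcal{A}$. Combined with $E(a,j) \in \mathcal{A}_1 \subseteq \mathcal{A}$ and the fact that $\mathcal{A}$ is a subalgebra, this gives $Z(z;(a,j)) = {\rm Fr}'(z') E(a,j) \in \mathcal{A}$. The main technical step is extracting the correction operator $\sigma$ from Lemma 5.2(ii); once $\sigma$ is identified as the operator forced by the non-commutation of ${H \choose np}$ with $\mu_a Y^m X^{m-s(a,j)}$, everything else becomes bookkeeping.
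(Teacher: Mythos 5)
Your proof is correct and takes essentially the same route as the paper: both handle (B)/(D) by taking $z'=z$, and both reduce the (A)/(C) case to Lemma 5.2(ii), sliding ${\rm Fr}'$ of a PBW basis element through $\mu_a Y^m X^{m-s(a,j)}$ at the cost of the correction ${H\choose n}\mapsto{H\choose n}+{H\choose n-1}$, which is exactly the $z'$ the paper produces. You gain a little over the published argument by packaging the correction as an explicit operator $\sigma$ and by spelling out its injectivity (hence the nonvanishing of $z'$), a point the paper leaves implicit.
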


\begin{proof}
Since $E(a,j) \in \mathcal{A}_1$, the commutativity of ${\rm Fr}' (z')$ and $E(a,j)$ follows immediately from 
Propositions 2.4 (i) and 2.5. So we have to show the first equality in the lemma. 
There is   nothing to do if the pair $(a,j)$ satisfies (B) or (D). So we may assume that 
$(a,j)$ satisfies (A) or (C). Set $s=s(a,j)$. It is enough to show the first equality for $z= Y^{(n_1)} {H \choose n_2} X^{(n_3)}$ 
with $n_1, n_2, n_3 \in \mathbb{Z}_{\geq 0}$. By Lemma 5.2 (ii)  we have 
\begin{eqnarray*}
\lefteqn{Z \bigg( Y^{(n_1)} {H \choose n_2} X^{(n_3)} ; (a,j) \bigg) } \\
& = & \mu_a \sum_{m=n(a,j)}^{p-1} c_m Y^m X^{m-s} 
Y^{(n_1p)} {H \choose n_2 p} X^{(n_3 p)} X^s \\
& = & Y^{(n_1p)} \bigg( {H \choose n_2 p} + {H \choose (n_2-1)p } \bigg) X^{(n_3 p)} 
\mu_a \sum_{m=n(a,j)}^{p-1} c_m Y^m X^{m} \\
& = & {\rm Fr}' \bigg( Y^{(n_1)} \bigg( {H \choose n_2 } + 
{H \choose n_2-1 } \bigg) X^{(n_3)} \bigg)  E(a,j), 
\end{eqnarray*}
as required. Moreover, if 
$ Y^{(n_1)} {H \choose n_2} X^{(n_3)}$ lies in $\mathcal{A}$ (i.e. $n_1= n_3$), clearly 
$Y^{(n_1 )}   \big(  {H \choose n_2 }  + 
 {H \choose n_2-1  } \big) X^{(n_3)}$ and 
$Z \big( Y^{(n_1)} {H \choose n_2} X^{(n_3)} ; (a,j) \big)$ 
also lie in $\mathcal{A}$. 
\end{proof}

\begin{Proposition}
{\rm (i)} For $z \in \mathcal{U}$ and 
$(a,j) \in \mathcal{P} $, we have $E(a,j) Z\big(z;(a,j)\big) = Z\big(z;(a,j)\big) = 
 Z\big(z;(a,j)\big)E(a,j).$ \\ \\ 
{\rm (ii)} For a pair $(a,j) \in \mathcal{P}$, the map 
$Z\big(-;(a,j)\big): \mathcal{U} \rightarrow \mathcal{U},\ z \mapsto Z\big(z;(a,j)\big)$ 
induces an injective  $k$-algebra homomorphism from 
$\mathcal{U}$ to $ E(a,j)\mathcal{U}E(a,j)$.  \\ \\
{\rm (iii)} Let $z_1, z_2 \in \mathcal{U}$ and 
$(a_1,j_1), (a_2,j_2) \in \mathcal{P}$. If 
$(a_1,j_1) \neq (a_2, j_2)$, then we have $Z\big(z_1 ;(a_1 ,j_1)\big) Z\big(z_2 ;(a_2,j_2)\big)= 0$. 
\\ \\
{\rm (iv)} Let $u$ be an element of the subalgebra of $\mathcal{U}$ 
generated by $X^{(p^i)}$ and $Y^{(p^i)}$ with 
$i \geq 1$. Then, for $z \in \mathcal{U}$, we have 
$u Z\big(z;(a,j)\big) =  Z\big({\rm Fr}(u)z;(a,j)\big).$
\end{Proposition}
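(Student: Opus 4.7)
Part (i) is immediate from Lemma 5.3 combined with the idempotence $E(a,j)^2 = E(a,j)$ (Proposition 4.5): writing $Z(z;(a,j)) = E(a,j)\mathrm{Fr}'(z') = \mathrm{Fr}'(z')E(a,j)$ and multiplying by $E(a,j)$ on either side gives the result. For part (iii), the key fact is that every element of $\mathrm{Fr}'(\mathcal{U})$ centralizes $\mathcal{A}_1$: each basis element $Y^{(mp)}{H \choose np}X^{(m'p)}$ has its $Y^{(mp)}$- and $X^{(m'p)}$-factors commuting with $\mathcal{A}_1$ by Proposition 2.5, while ${H \choose np}\in\mathcal{U}^0\subseteq\mathcal{A}$ commutes with $\mathcal{A}\supseteq\mathcal{A}_1$ by the commutativity of $\mathcal{A}$ (Proposition 2.4 (i)). Combining this with Lemma 5.3 yields
\[
Z(z_1;(a_1,j_1))Z(z_2;(a_2,j_2)) = \mathrm{Fr}'(z_1')\mathrm{Fr}'(z_2')E(a_1,j_1)E(a_2,j_2) = 0
\]
by the orthogonality of distinct primitive idempotents in Proposition 4.5.

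For part (iv), the set of $u$ for which the identity holds is a $k$-linear subspace closed under multiplication (by induction using that $\mathrm{Fr}$ is an algebra homomorphism), so it suffices to verify the identity on the generators $u = X^{(p^i)}, Y^{(p^i)}$ with $i\geq 1$; these satisfy $u = \mathrm{Fr}'(\mathrm{Fr}(u))$. In cases (A)/(C), I would commute $u$ past the factors $\mu_a Y^m X^{m-s(a,j)}$ of $Z(z;(a,j))$ using Lemma 5.2 (ii); the leftmost weight $\mu_a$ is then carried to $\mu_{a+2s(a,j)}$, which equals $\mu_1$ in case (A) (where $s = (p-a+1)/2$) and $\mu_0$ in case (C) (where $s = (p-a)/2$). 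Since the reduced weight index $a'\in\{0,1\}$ satisfies $a'\leq s(a,j)$, Lemma 5.2 (i) applies with $b = s(a,j)$ and collapses $\mathrm{Fr}'(\mathrm{Fr}(u))\mathrm{Fr}'(z)$ into $\mathrm{Fr}'(\mathrm{Fr}(u)z)$, yielding the claim. In cases (B)/(D), I would use instead the factorization $E(a,j) = \mu_a\sum_{m\geq \tilde n(a,j)} \tilde c_m X^m Y^m$ from Lemma 4.8 together with the bound $\tilde n(a,j)\geq a$ (checked directly from the formulas for $\tilde n(a,j)$ in cases (B)/(D)), so that Lemma 5.2 (i) applies term-by-term with $b = m$.

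Part (ii) is the principal difficulty. Linearity is immediate. For multiplicativity, combine Lemma 5.3 and part (i) (with the centralizer fact from (iii)) to reduce $Z(z_1z_2;(a,j)) = Z(z_1;(a,j))Z(z_2;(a,j))$ to the identity $\mathrm{Fr}'(z_1')\mathrm{Fr}'(z_2')E(a,j) = \mathrm{Fr}'((z_1z_2)')E(a,j)$, where $z\mapsto z'$ is the $k$-linear map of Lemma 5.3. The plan is to verify this identity by the same commutation-plus-Lemma 5.2 (i) strategy as in (iv): in cases (A)/(C) the explicit $X^{s(a,j)}$-tail in the formula defining $Z$ supplies the required $X^b$, while in cases (B)/(D) the $X^m$-tail from $E(a,j) = \mu_a\sum\tilde c_m X^m Y^m$ with $m\geq\tilde n(a,j)\geq a$ plays the same role. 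For injectivity, apply the Remark after Proposition 2.3: the multiplication map $\mathcal{U}_1\otimes\mathrm{Fr}'(\mathcal{U})\to\mathcal{U}$ is a $k$-linear isomorphism, so $Z(z;(a,j)) = E(a,j)\mathrm{Fr}'(z') = 0$ forces $\mathrm{Fr}'(z') = 0$ (since $E(a,j)\neq 0$), hence $z' = 0$ by injectivity of $\mathrm{Fr}'$, and finally $z = 0$ because $z\mapsto z'$ is a $k$-linear bijection on $\mathcal{U}$: the identity in cases (B)/(D), and the invertible shift $Y^{(n_1)}{H\choose n_2}X^{(n_3)}\mapsto Y^{(n_1)}{H+1\choose n_2}X^{(n_3)}$ in cases (A)/(C).
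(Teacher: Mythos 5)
Your proof follows essentially the same route as the paper's: Lemma 5.3 and idempotence for (i), orthogonality of the $E(a,j)$ together with Lemma 5.3 for (iii), verification on the generators $X^{(p^i)}, Y^{(p^i)}$ using Lemma 5.2 for (iv), and a direct computation via Lemma 5.2 (i) with the $X^{s(a,j)}$-tail (cases (A)/(C)) or the $X^m Y^m$-decomposition of $E(a,j)$ (cases (B)/(D)) for multiplicativity in (ii), with injectivity from the multiplication isomorphism of Proposition 2.3. One genuinely useful addition: you verify explicitly that $\tilde n(a,j)\geq a$ in cases (B)/(D) (and $a'\leq s(a,j)$ in cases (A)/(C)), which is the hypothesis needed to invoke Lemma 5.2 (i) and which the paper leaves implicit; on the other hand, your intermediate ``reduction'' of multiplicativity to the identity $\mathrm{Fr}'(z_1')\mathrm{Fr}'(z_2')E(a,j) = \mathrm{Fr}'((z_1z_2)')E(a,j)$ is merely a restatement via Lemma 5.3 and adds a detour (and is potentially misleading, since $z\mapsto z'$ is not an algebra homomorphism) before you return, as the paper does, to computing directly from the defining formula for $Z$.
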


\begin{proof}
(i) There exists $z' \in \mathcal{A}$ such that 
$Z\big(z; (a,j)\big) =  {\rm Fr}' (z')  E(a,j) =   E(a,j){\rm Fr}' (z') $ by Lemma 5.3. Since 
$E(a,j)$ is an idempotent, the claim follows.

(ii) By (i), clearly all $Z\big(z;(a,j)\big)$ lie in $E(a,j)\mathcal{U}E(a,j)$. The linearity of the 
map is also clear and the injectivity follows from Lemma 5.3 and 
Proposition 2.3. Note also that 
$Z\big(1;(a,j)\big) = E(a,j)$. So we only have to show that 
$Z\big(z_1 ;(a,j)\big) Z\big(z_2 ;(a,j)\big)= Z\big(z_1 z_2; (a,j)\big)$ for 
all $z_1,z_2 \in \mathcal{U}$. Suppose that 
the pair $(a,j)$ satisfies (B) or (D). Then 
\begin{eqnarray*}
Z\big(z_1 ;(a,j)\big) Z\big(z_2 ;(a,j)\big)
& = & {\rm Fr}' (z_1) E(a,j){\rm Fr}' (z_2) E(a,j) \\
& = & {\rm Fr}' (z_1) {\rm Fr}' (z_2) E(a,j) \\
& = & \sum_{m=\tilde{n}(a,j)}^{p-1} \tilde{c}_m \mu_a {\rm Fr}'(z_1) {\rm Fr}'(z_2) X^m Y^m \\
& = & \sum_{m=\tilde{n}(a,j)}^{p-1} \tilde{c}_m \mu_a {\rm Fr}'(z_1 z_2) X^m Y^m \ \ \ 
\big(\mbox{by Lemma 5.2 (i)} \big)\\
& = &  {\rm Fr}'(z_1 z_2) \sum_{m=\tilde{n}(a,j)}^{p-1} \tilde{c}_m \mu_a X^m Y^m \\
& = &  {\rm Fr}'(z_1 z_2) E(a,j) \\
& = & Z\big(z_1 z_2; (a,j)\big).
\end{eqnarray*}

\noindent Next, suppose that the pair $(a,j)$ satisfies (A) or (C). Set $s=s(a,j)$. 
Then 
\begin{eqnarray*}
\lefteqn{Z\big(z_1;(a,j)\big) Z\big(z_2;(a,j)\big)} \\
& = & \bigg( \mu_a \sum_{m=n(a,j)}^{p-1} c_m 
Y^m X^{m-s} {\rm Fr}' (z_1) X^{s} \bigg) \cdot 
 \bigg( \mu_a \sum_{m'=n(a,j)}^{p-1} c_{m'} 
Y^{m'} X^{m'-s} {\rm Fr}' (z_2) X^{s} \bigg) \\
& = &  \mu_a \sum_{m=n(a,j)}^{p-1} c_m 
Y^m X^{m-s}  \cdot \mu_{a+2s} {\rm Fr}' (z_1)
  \sum_{m'=n(a,j)}^{p-1} c_{m'} 
 X^{s} Y^{m'} X^{m'-s} {\rm Fr}' (z_2) X^{s} \\
& = &  \mu_a \sum_{m=n(a,j)}^{p-1} c_m 
Y^m X^{m-s} \sum_{m'=n(a,j)}^{p-1} c_{m'} 
 X^{s} Y^{m'} X^{m'-s} \cdot \mu_{a+2s} {\rm Fr}' (z_1)
   {\rm Fr}' (z_2) X^{s}.
\end{eqnarray*}
Note that 
$a+2s=0$ or $1$ in $\mathbb{F}_p$, and hence that 
$ \mu_{a+2s} {\rm Fr}' (z_1)
   {\rm Fr}' (z_2) X^{s} = 
 \mu_{a+2s} {\rm Fr}' (z_1 z_2) X^{s}$ 
by Lemma 5.2 (i). Therefore, the last term in the above equalities is equal to 
\begin{eqnarray*}
\lefteqn{E(a,j) \sum_{m'=n(a,j)}^{p-1} c_{m'} 
 Y^{m'} X^{m'-s} \cdot \mu_{a+2s} {\rm Fr}' (z_1 z_2) X^{s}} \\
& = & E(a,j) \cdot \mu_{a}\sum_{m'=n(a,j)}^{p-1} c_{m'} 
 Y^{m'} X^{m'-s}   {\rm Fr}' (z_1 z_2) X^{s} \\
& = & E(a,j) Z\big(z_1 z_2; (a,j)\big) \\
& = &  Z\big(z_1 z_2; (a,j)\big),
\end{eqnarray*}
where the last equality follows from (i), and the claim follows.  

(iii) We know that $E(a_1, j_1) E(a_2, j_2)=0$ if $(a_1, j_1) \neq (a_2, j_2)$. 
Then the claim easily follows from Lemma 5.3. 

(iv) It is enough to show the equality for $u=X^{(p^i)}$ and $u=Y^{(p^i)}$ with $i \geq 1$. 
Using Lemma 5.2 we have 
\begin{eqnarray*}
X^{(p^i)}Z\big(z ;(a,j)\big)
& = & {\rm Fr}' (X^{(p^{i-1})}){\rm Fr}' (z) E(a,j) \\
& = & \sum_{m=\tilde{n}(a,j)}^{p-1} \tilde{c}_m \mu_a {\rm Fr}'(X^{(p^{i-1})}) {\rm Fr}'(z) X^m Y^m \\
& = & \sum_{m=\tilde{n}(a,j)}^{p-1} \tilde{c}_m \mu_a {\rm Fr}'(X^{(p^{i-1})} z) X^m Y^m \\
& = &  {\rm Fr}'( X^{(p^{i-1})}z) \sum_{m=\tilde{n}(a,j)}^{p-1} \tilde{c}_m \mu_a X^m Y^m \\
& = &  {\rm Fr}'( X^{(p^{i-1})}z) E(a,j) \\
& = & Z\big({\rm Fr}(X^{(p^i)}) z; (a,j)\big)
\end{eqnarray*}
if the pair $(a,j)$ satisfies (B) or (D), and 
\begin{eqnarray*}
X^{(p^i)}Z\big(z ;(a,j)\big)
& = & X^{(p^i)} \bigg( \mu_a^2 \sum_{m=n(a,j)}^{p-1} c_m 
Y^m X^{m-s} {\rm Fr}' (z) X^{s} \bigg)  \\
& = & \mu_a^2 \sum_{m=n(a,j)}^{p-1} c_m 
Y^m X^{m-s} X^{(p^i)}{\rm Fr}' (z) X^{s}    \\
& = &  \mu_a \sum_{m=n(a,j)}^{p-1} c_m 
Y^m X^{m-s} \mu_{a+2s}{\rm Fr}'(X^{(p^{i-1})}){\rm Fr}' (z) X^{s}    \\ 
& = &  \mu_a \sum_{m=n(a,j)}^{p-1} c_m 
Y^m X^{m-s} \mu_{a+2s}{\rm Fr}'(X^{(p^{i-1})}z) X^{s}   \\
& = &  \mu_a^2 \sum_{m=n(a,j)}^{p-1} c_m 
Y^m X^{m-s} {\rm Fr}'(X^{(p^{i-1})}z) X^{s}   \\
& = & Z\big({\rm Fr}(X^{(p^{i})})z ;(a,j)\big)
\end{eqnarray*}
if the pair $(a,j)$ satisfies (A) or (C), where $s=s(a,j)$. 

Similarly we obtain $Y^{(p^i)}Z\big(z ;(a,j)\big)=Z\big({\rm Fr}(Y^{(p^{i})})z ;(a,j)\big)$ 
for $i \geq 1$, the proof is complete.
\end{proof}

For $n$ pairs $(a_i,j_i) \in \mathcal{P},\ 0 \leq i \leq n-1$,  define 
$E\big((a_0, \cdots , a_{n-1}), (j_0, \cdots , j_{n-1}) \big)$ inductively as  
$$E\big((a_0),(j_0)\big)= E(a_0, j_0)$$
and 
$$E\big((a_0, \cdots , a_{n-1}), (j_0, \cdots , j_{n-1}) \big)= 
Z \Big( E\big((a_1, \cdots , a_{n-1}), (j_1, \cdots , j_{n-1}) \big) ; (a_0, j_0) \Big) $$
for $n \geq 2$. Note that all $E\big((a_0, \cdots , a_{n-1}), (j_0, \cdots , j_{n-1}) \big)$ 
lie in $\mathcal{A}_n$. 

\begin{Proposition}
For $r$ pairs $(a_i,j_i) \in \mathcal{P},\ 0 \leq i \leq r-1$, the following holds. \\ \\
{\rm (i)} We have 
$$
\mu_{\sum_{i=0}^{r-1} b_i p^i}^{(r)} 
E\big((a_0, \cdots , a_{r-1}), (j_0, \cdots , j_{r-1}) \big)
=  E\big((a_0, \cdots , a_{r-1}), (j_0, \cdots , j_{r-1}) \big), $$
where
$$b_i = 
\left\{ \begin{array}{ll} 
{a_i-p} & {\mbox{if $(a_i, j_i)$ satisfies {\rm (A)} or {\rm (C)},}} \\
{a_i} & {\mbox{if $(a_i, j_i)$ satisfies {\rm (B)} or {\rm (D)}}} 
\end{array}, \right.$$
regarding each $a_i$ as the corresponding integer 
with $0 \leq a_i \leq p-1$. In particular, 
$E\big((a_0, \cdots , a_{r-1}), (j_0, \cdots , j_{r-1}) \big)$ is a $\mathcal{U}_r^{0}$-weight vector 
of $\mathcal{U}_r^0$-weight $\sum_{i=0}^{r-1} b_i p^i$. \\ \\
{\rm (ii)} Suppose that $r \geq 2$. Then  
$$\sum_{(a_i,j_i) \in \mathcal{P},\ 1 \leq i \leq r-1} 
E\big((a_0, \cdots , a_{r-1}),(j_0, \cdots , j_{r-1})\big)= E(a_0, j_0).$$ 
\ \\
{\rm (iii)}  The elements $E\big((a_0, \cdots , a_{r-1}), (j_0, \cdots , j_{r-1}) \big)$  with 
 $(a_i,j_i) \in \mathcal{P},\ 0 \leq i \leq r-1$ are pairwise 
orthogonal  primitive idempotents in $\mathcal{U}_r$ whose sum is $1$. 
\end{Proposition}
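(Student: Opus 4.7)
The plan is to induct on $r$ and establish parts (ii), (iii), (i) in that order, since the first two depend only on the formal properties of $Z(-;(a_0,j_0))$ already collected in Proposition~5.4, whereas (i) is the delicate weight computation.

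For (ii), using $E((a_0,\ldots,a_{r-1}),(j_0,\ldots,j_{r-1}))=Z\bigl(E((a_1,\ldots,a_{r-1}),(j_1,\ldots,j_{r-1}));(a_0,j_0)\bigr)$ and the $k$-linearity of $Z(-;(a_0,j_0))$ from Proposition~5.4(ii), the inner sum over $(a_i,j_i)$, $1\leq i\leq r-1$, passes into the argument of $Z$; the inductive hypothesis identifies this inner sum with $1$, and $Z(1;(a_0,j_0))=E(a_0,j_0)$ by definition, with the base $r=1$ being Proposition~4.5. Part (iii) is then almost formal: for pairwise orthogonality, if the leading pair of two tuples differs Proposition~5.4(iii) kills the product, while if it agrees the algebra-homomorphism property in Proposition~5.4(ii) rewrites the product as $Z$ of a product of tails, which vanishes by induction; idempotency works by the same mechanism; sum-to-one is immediate from (ii) and Proposition~4.5; and primitivity follows by counting, since $|\mathcal{P}|^r=(p(p+1)/2)^r$ matches $\sum_{\lambda=0}^{p^r-1}\dim L(\lambda)=(p(p+1)/2)^r$ (via Steinberg's tensor product theorem), the cardinality of any complete orthogonal decomposition of $1\in\mathcal{U}_r$ into primitive idempotents.

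The crux is (i), also by induction on $r$. The base $r=1$ is Proposition~4.1(ii), since $E(a,j)=\mu_a\cdot(\text{element of }\mathcal{A})$. For the step I decompose $\mu_c^{(r)}=\mu_{c_0}{\rm Fr}'(\mu_{c'}^{(r-1)})$ via Proposition~5.1(i), where $c=c_0+c'p$ and $0\leq c_0\leq p-1$. When $(a_0,j_0)$ satisfies (B) or (D), the definition gives $E(\vec{a},\vec{j})={\rm Fr}'(E')E(a_0,j_0)$; substituting the second form $E(a_0,j_0)=\mu_{a_0}\sum_m\tilde{c}_mX^mY^m$ from Lemma~4.8 and checking via Lemma~4.7 that $\tilde{n}(a_0,j_0)\geq a_0$ in these cases, Lemma~5.2(i) applies summand-by-summand (with $b=m\geq a_0$) to yield $\mu_c^{(r)}E(\vec{a},\vec{j})=\delta_{c_0,a_0}\,{\rm Fr}'(\mu_{c'}^{(r-1)}E')E(a_0,j_0)$, and the inductive hypothesis on the $\mathcal{U}_{r-1}^0$-weight of $E'$ closes this case with $b_0=a_0$.

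The main obstacle is the remaining case, (A) or (C), where $E(\vec{a},\vec{j})=\mu_{a_0}\sum_m c_mY^mX^{m-s}{\rm Fr}'(E')X^s$ with $s=s(a_0,j_0)$, and the block $A:=\mu_{a_0}Y^mX^{m-s}$ has nonzero degree $-s$, so ${\rm Fr}'(\mu_{c'}^{(r-1)})$ does not commute past it. I would iterate Lemma~5.2(ii) to obtain ${H\choose np}\cdot A=A\cdot{H-p\choose np}$ (the right-hand side lying in the subalgebra generated by the $B_n={H\choose np}$ by Proposition~2.1(v)), which extends multiplicatively to $P(H)\cdot A=A\cdot P(H-p)$ for any polynomial $P$ in these generators. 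A direct computation yields ${\rm Fr}'(\mu_{c'}^{(r-1)})={H-(c'+1)p\choose p^r-p}$, and a short Vandermonde identity shows that $H\mapsto H-p$ sends this to ${\rm Fr}'(\mu_{c'+1}^{(r-1)})$; hence ${\rm Fr}'(\mu_{c'}^{(r-1)})\cdot A=A\cdot{\rm Fr}'(\mu_{c'+1}^{(r-1)})$. With this shift in hand, Lemma~5.2(i) now applies with $b=s$ (one checks by hand that $a_0+2s\equiv\epsilon\pmod{p}$ with $\epsilon\in\{0,1\}$ and $\epsilon\leq s$), and combined with $\mu_{\epsilon}X^s=X^s\mu_{a_0}$ from Proposition~4.1(iii) the expression reassembles as $\delta_{c_0,a_0}\,Z(\mu_{c'+1}^{(r-1)}E';(a_0,j_0))$. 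The inductive hypothesis gives $\mu_{c'+1}^{(r-1)}E'=\delta_{c'+1,\,\sum_{i\geq 1}b_ip^{i-1}}\,E'$, so $c_0=a_0$ and $c'+1\equiv\sum_{i\geq 1}b_ip^{i-1}\pmod{p^{r-1}}$, whence $c\equiv(a_0-p)+\sum_{i\geq 1}b_ip^i\equiv\sum_ib_ip^i\pmod{p^r}$ with $b_0=a_0-p$, as desired.
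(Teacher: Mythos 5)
Your proof is correct, and parts (ii) and (iii) follow the same reasoning as the paper. The real divergence is in part (i). The paper's route is shorter: it first proves, by a direct computation, the single identity $Z\big(\mu^{(r-1)}_{\sum_{i\geq 1}b_ip^{i-1}};(a_0,j_0)\big)=\mu^{(r)}_{\sum_i b_ip^i}E(a_0,j_0)$ (separately for (A)/(C) and (B)/(D), moving $\mu^{(r)}$ past $Y^mX^{m-s}$ and $X^s$ via Proposition 5.1(iii)), and then immediately invokes Proposition 5.4(i)–(ii) to rewrite $\mu^{(r)}_{\sum b_ip^i}E(\vec a,\vec j)=Z\big(\mu^{(r-1)}_{\sum_{i\geq 1}b_ip^{i-1}}\cdot E';(a_0,j_0)\big)$ and finish by induction. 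You instead compute $\mu^{(r)}_c E(\vec a,\vec j)$ for an arbitrary $c$ and obtain the conclusion from the Kronecker delta. In the (B)/(D) case your argument essentially reproduces the paper's use of Lemma 5.2(i) (with the correct observation that Lemma 4.7 gives $\tilde n(a_0,j_0)\geq a_0$, which also underlies the paper's proof of Prop. 5.4(ii) in that case). For (A)/(C) you introduce a genuinely new step: the ``shift law'' ${\rm Fr}'(\mu^{(r-1)}_{c'})\,\mu_{a_0}Y^mX^{m-s}=\mu_{a_0}Y^mX^{m-s}\,{\rm Fr}'(\mu^{(r-1)}_{c'+1})$, which you derive by inverting Lemma 5.2(ii) (since $\binom{H}{np}+\binom{H}{(n-1)p}=\binom{H+p}{np}$, the conjugation action of $\mu_{a_0}Y^mX^{m-s}$ on the span of the $\binom{H}{np}$ is the substitution $H\mapsto H-p$, an automorphism that carries ${\rm Fr}'(\mu^{(r-1)}_{c'})=\binom{H-(c'+1)p}{p^r-p}$ to ${\rm Fr}'(\mu^{(r-1)}_{c'+1})$). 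I checked this derivation and it is sound, as is your subsequent application of Lemma 5.2(i) with $b=s$ and $a=a_0+2s-p\in\{0,1\}$. The trade-off: the paper's route is shorter because Lemma 5.2(ii) has already been metabolized into Prop.~5.4(ii), whereas your route makes the shift mechanism explicit, gives the eigenvalue of every $\mu^{(r)}_c$ directly, and sidesteps Prop.~5.4(ii) for this step; both are legitimate. Two small points of phrasing: what you actually need for the shift law is the \emph{linear} extension of Lemma~5.2(ii) over the span of the $\binom{H}{np}$, not multiplicativity (though that also holds); and ``iterate Lemma 5.2(ii)'' would more precisely be ``invert'' it, since the lemma as stated moves $\binom{H}{np}$ from right to left of $A$, while you need the opposite direction.
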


\begin{proof} We use induction on $r$. 
(i) and (iii) for $r=1$ follow immediately from the definition of $E(a_0,j_0)$ and Proposition 4.5. 
Suppose that $r \geq 2$. 

(i) We see that $\mu^{(r)}_{\sum_{i=0}^{r-1} b_i p^i} E(a,j) = 
Z \big( \mu^{(r-1)}_{\sum_{i=1}^{r-1} b_i p^{i-1}} ; (a_0,j_0)\big)$. Indeed, by Proposition 5.1 
(i) and (iii) we have 
\begin{eqnarray*}
Z \big( \mu^{(r-1)}_{\sum_{i=1}^{r-1} b_i p^{i-1}} ; (a_0,j_0)\big) 
& = & {\rm Fr}'(\mu^{(r-1)}_{\sum_{i=1}^{r-1} b_i p^{i-1}}) E(a_0,j_0) \\
& = & {\rm Fr}'(\mu^{(r-1)}_{\sum_{i=1}^{r-1} b_i p^{i-1}}) 
\mu_{a_0} E(a_0,j_0) \\
& = & \mu_{a_0+\sum_{i=1}^{r-1} b_i p^{i}}^{(r)} E(a_0, j_0) \\
& = & \mu_{\sum_{i=0}^{r-1} b_i p^{i}}^{(r)} E(a_0, j_0)
\end{eqnarray*}
if the pair $(a_0,j_0)$ satisfies (B) or (D), and 
\begin{eqnarray*}
Z \big( \mu^{(r-1)}_{\sum_{i=1}^{r-1} b_i p^{i-1}} ; (a_0,j_0)\big) 
& = & \mu_{a_0}^2 \sum_{m=n(a_0,j_0)}^{p-1} c_m Y^m X^{m-s} 
{\rm Fr}'(\mu^{(r-1)}_{\sum_{i=1}^{r-1} b_i p^{i-1}}) X^s \\
& = & \mu_{a_0} \sum_{m=n(a_0,j_0)}^{p-1} c_m Y^m X^{m-s} \mu_{a_0+2s}
{\rm Fr}'(\mu^{(r-1)}_{\sum_{i=1}^{r-1} b_i p^{i-1}}) X^s \\
& = & \mu_{a_0} \sum_{m=n(a_0,j_0)}^{p-1} c_m Y^m X^{m-s} 
\mu^{(r)}_{a_0+2s-p +\sum_{i=1}^{r-1} b_i p^{i}} X^s \\
& = & \mu^{(r)}_{a_0-p +\sum_{i=1}^{r-1} b_i p^{i}} \cdot 
\mu_{a_0} \sum_{m=n(a_0,j_0)}^{p-1} c_m Y^m X^{m} \\
& = & \mu_{\sum_{i=0}^{r-1} b_i p^{i}}^{(r)} E(a_0, j_0)
\end{eqnarray*}
if the pair $(a_0,j_0)$ satisfies (A) or (C), where $s=s(a_0,j_0)$.  
Then using Proposition 5.4 (i) and (ii) 
we have 
\begin{eqnarray*}
\lefteqn{\mu_{\sum_{i=0}^{r-1} b_i p^i}^{(r)} 
E\big((a_0, \cdots , a_{r-1}), (j_0, \cdots , j_{r-1}) \big)} \\
& = & \mu_{\sum_{i=0}^{r-1} b_i p^i}^{(r)} 
Z \Big( E\big((a_1, \cdots , a_{r-1}), (j_1, \cdots , j_{r-1}) \big); (a_0,j_0) \Big) \\
& = & \mu_{\sum_{i=0}^{r-1} b_i p^i}^{(r)} E(a_0,j_0)
Z \Big( E\big((a_1, \cdots , a_{r-1}), (j_1, \cdots , j_{r-1}) \big); (a_0,j_0) \Big) \\
& = & Z \big( \mu^{(r-1)}_{\sum_{i=1}^{r-1} b_i p^{i-1}} ; (a_0,j_0)\big) 
Z \Big( E\big((a_1, \cdots , a_{r-1}), (j_1, \cdots , j_{r-1}) \big); (a_0,j_0) \Big) \\
& = & Z \Big( \mu^{(r-1)}_{\sum_{i=1}^{r-1} b_i p^{i-1}} E\big((a_1, \cdots , a_{r-1}), (j_1, \cdots , j_{r-1}) \big); (a_0,j_0) \Big) .
\end{eqnarray*}
By induction, we have 
$$\mu^{(r-1)}_{\sum_{i=1}^{r-1} b_i p^{i-1}} E\big((a_1, \cdots , a_{r-1}), (j_1, \cdots , j_{r-1}) \big) = E\big((a_1, \cdots , a_{r-1}), (j_1, \cdots , j_{r-1}) \big),$$
and the claim follows.

(ii) By Proposition 5.4 (ii) and induction, we have 
\begin{eqnarray*}
\lefteqn{\sum_{(a_i,j_i) \in \mathcal{P},\ 1 \leq i \leq r-1} 
E\big((a_0, \cdots , a_{r-1}),(j_0, \cdots , j_{r-1})\big)} \\
& = & \sum_{(a_i,j_i) \in \mathcal{P},\ 1 \leq i \leq r-1} 
Z \Big( E\big((a_1, \cdots , a_{r-1}),(j_1, \cdots , j_{r-1})\big) ; (a_0,j_0) \Big) \\
& = & 
Z \bigg( \sum_{(a_i,j_i) \in \mathcal{P},\ 1 \leq i \leq r-1} 
E\big((a_1, \cdots , a_{r-1}),(j_1, \cdots , j_{r-1})\big) ; (a_0,j_0) \bigg) \\
& = & Z \big( 1 ; (a_0,j_0)\big) \\
& = & E(a_0,j_0),
\end{eqnarray*}
as desired.

(iii)  If $(a_0, j_0) \neq (a_0', j_0')$, we have by 
Proposition 5.4 (iii) that 
$$E\big((a_0, \cdots , a_{r-1}),(j_0, \cdots , j_{r-1})\big) 
E\big((a_0', \cdots , a_{r-1}'),(j_0', \cdots , j_{r-1}')\big)=0,$$
so we may assume  that $(a_0, j_0) = (a_0', j_0')$. Then 
\begin{eqnarray*}
\lefteqn{E\big((a_0, \cdots , a_{r-1}),(j_0, \cdots , j_{r-1})\big) 
E\big((a_0', \cdots , a_{r-1}'),(j_0', \cdots , j_{r-1}')\big)} \\
& = & Z \Big( E\big((a_1, \cdots , a_{r-1}),(j_1, \cdots , j_{r-1})\big) ; (a_0,j_0) \Big) \\
&  & \times Z \Big( E\big((a_1', \cdots , a_{r-1}'),(j_1', \cdots , j_{r-1}')\big) ; (a_0,j_0) \Big) \\
& = & Z \Big( E\big((a_1, \cdots , a_{r-1}),(j_1, \cdots , j_{r-1})\big) 
E\big((a_1', \cdots , a_{r-1}'),(j_1', \cdots , j_{r-1}')\big) ; (a_0,j_0) \Big)
\end{eqnarray*}
by Proposition 5.4 (ii). By induction, this is equal to 
$$Z \Big( E\big((a_1, \cdots , a_{r-1}),(j_1, \cdots , j_{r-1})\big) ; (a_0,j_0) \Big) 
= E\big((a_0, \cdots , a_{r-1}),(j_0, \cdots , j_{r-1})\big)$$
if $(a_i,j_i)=(a_i',j_i')$ for all $i$ with $1 \leq i \leq r-1$, and to
$$Z\big(0; (a_0, j_0)\big) = 0$$
otherwise. Therefore, we conclude that all 
$E\big((a_0, \cdots , a_{r-1}),(j_0, \cdots , j_{r-1})\big)$ 
are pairwise orthogonal idempotents. The fact that a sum of these idempotents is 1 
follows from (ii) and Proposition 4.5. It remains to show that these idempotents are primitive. Consider an integer $\lambda$ with $0 \leq \lambda \leq p^r-1$ and its 
$p$-adic expansion $\lambda = \sum_{i=0}^{r-1} \lambda_i p^i$.  
It follows from Steinberg's tensor product theorem that 
$${\rm dim}_k L(\lambda) = \prod_{i=0}^{r-1} {\rm dim}_k L(\lambda_i) = 
\prod_{i=0}^{r-1} (\lambda_i + 1). $$
Therefore, the number of the summands in a decomposition of $1 \in \mathcal{U}_r$ 
into pairwise orthogonal primitive idempotents is 
\begin{eqnarray*}
\sum_{\lambda=0}^{p^r-1} {\rm dim}_k L(\lambda) 
& = & \sum_{(\lambda_0, \cdots , \lambda_{r-1}) \in \{ 0,1, \cdots , p-1 \}^r } 
(\lambda_0+1) \cdots (\lambda_{r-1}+1) \\
& = & \prod_{i=0}^{r-1} \bigg(\sum_{\lambda_i =0}^{p-1} (\lambda_i +1)\bigg) \\
& = & \prod_{i=0}^{r-1} \big(p(p+1)/2\big) \\
& = & \big(p(p+1)/2\big)^r.
\end{eqnarray*}
On the other hand, the number of the elements  
$E\big((a_0, \cdots , a_{r-1}),(j_0, \cdots , j_{r-1})\big)$ 
is $\big(p(p+1)/2\big)^r$ as well. Therefore, these idempotents must be primitive. 
\end{proof}

\noindent {\bf Remark.} Note that any idempotent in $\mathcal{U}$ must have degree $0$, 
namely, it must lie in $\mathcal{A}$. Since the subalgebra $\mathcal{A}$ (hence $\mathcal{A}_r$) is commutative, (iii) gives a unique decomposition of $1$ into 
a sum of primitive idempotents in $\mathcal{U}_r$ 
(see \cite[ch. 1. Theorem 4.6]{nagao-tsushimabook}). \\ 

We can also give primitive idempotents in $\mathcal{U}_{r,r'}$ ($r'>r$).

\begin{Proposition}
For $r' > r$, the elements 
$$E\big((a_0, \cdots , a_{r-1}), (j_0, \cdots , j_{r-1}) \big)
{\rm Fr}'^r \big(\mu_{a'}^{(r'-r)}\big)$$  
with 
 $(a_i,j_i) \in \mathcal{P},\ 0 \leq i \leq r-1$ and $0 \leq a' \leq p^{r'-r}-1$ are pairwise 
orthogonal  primitive idempotents in $\mathcal{U}_{r,r'}$ whose sum is $1$. 
\end{Proposition}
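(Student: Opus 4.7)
The plan is to mimic the structure of Proposition 5.5(iii), using the larger algebra $\mathcal{A}$ to bypass any direct commutation calculations and then closing with a dimension count to force primitivity.

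The starting observation is that each $E\big((a_0,\dots,a_{r-1}),(j_0,\dots,j_{r-1})\big)$ lies in $\mathcal{A}_r \subseteq \mathcal{A}$ (this was remarked immediately after the definition), while ${\rm Fr}'^r(\mu_{a'}^{(r'-r)}) \in {\rm Fr}'^r(\mathcal{U}^0_{r'-r}) \subseteq \mathcal{U}^0 \subseteq \mathcal{A}$. Since $\mathcal{A}$ is commutative by Proposition 2.4(i), the two factors commute, so the products in the statement are unambiguous. Moreover ${\rm Fr}'^r$ restricted to $\mathcal{U}^0$ is a $k$-algebra homomorphism, so $\big({\rm Fr}'^r(\mu_{a'}^{(r'-r)})\big)^2 = {\rm Fr}'^r\big((\mu_{a'}^{(r'-r)})^2\big) = {\rm Fr}'^r(\mu_{a'}^{(r'-r)})$ by Proposition 5.1(ii), and combined with $E(\vec a,\vec j)^2 = E(\vec a,\vec j)$ from Proposition 5.5(iii) this gives idempotence of each product.

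For pairwise orthogonality, commutativity lets me multiply two such products and separate the two kinds of factors. If $(\vec a,\vec j) \neq (\vec a\,',\vec j\,')$ the $E$-factors annihilate each other by Proposition 5.5(iii), and if $a' \neq a''$ the $\mathcal{U}_{r'-r}^0$-factors $\mu_{a'}^{(r'-r)}\mu_{a''}^{(r'-r)} = 0$ by Proposition 5.1(ii), and applying ${\rm Fr}'^r$ preserves this. The total sum factors as
\[
\bigg(\sum_{\vec a,\vec j}E\big((a_0,\dots,a_{r-1}),(j_0,\dots,j_{r-1})\big)\bigg)\cdot{\rm Fr}'^r\bigg(\sum_{a'=0}^{p^{r'-r}-1}\mu_{a'}^{(r'-r)}\bigg) = 1\cdot{\rm Fr}'^r(1) = 1
\]
using Proposition 5.5(iii) for the first factor and Proposition 5.1(ii) for the second.

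The one step I expect to be the real content is primitivity, handled exactly as in Proposition 5.5(iii) by a counting argument. The simple $\mathcal{U}_{r,r'}$-modules are $\widetilde{L}_r(\nu)$ with $0 \leq \nu \leq p^{r'}-1$, and writing $\nu = \nu' + \nu''p^r$ we have $\dim_k \widetilde{L}_r(\nu) = \dim_k L(\nu') = \nu'+1$ (the one-dimensional twist $k_{\nu''p^r}$ doesn't change dimension). Hence the number of summands in any decomposition of $1 \in \mathcal{U}_{r,r'}$ into pairwise orthogonal primitive idempotents is
\[
\sum_{\nu=0}^{p^{r'}-1}\dim_k\widetilde{L}_r(\nu) = p^{r'-r}\cdot\sum_{\nu'=0}^{p^r-1}(\nu'+1) = p^{r'-r}\cdot\big(p(p+1)/2\big)^r.
\]
The number of pairs $\big((\vec a,\vec j),a'\big)$ indexing my idempotents is exactly $\big(p(p+1)/2\big)^r \cdot p^{r'-r}$. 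Invoking Curtis--Reiner \cite[(54.5)]{curtis-reiner62} as before, equality of these two counts forces each of the idempotents to be primitive, completing the proof.
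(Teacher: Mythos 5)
Your argument for idempotence, orthogonality, and the sum being $1$ is sound, and the counting strategy for primitivity is a legitimate route, modeled on Propositions 4.5 and 5.5(iii). But the paper's own proof of primitivity is structurally different: it observes that the multiplication map $\mathcal{A}_r \otimes {\rm Fr}'^r(\mathcal{U}^0_{r'-r}) \rightarrow \mathcal{A}_{r,r'}$ is not merely a $k$-linear isomorphism (Proposition 2.3) but a $k$-algebra isomorphism because $\mathcal{A}$ is commutative; a tensor product of primitive idempotents in two commutative algebras is primitive in the tensor product algebra, so the products are primitive in $\mathcal{A}_{r,r'}$, and since every idempotent of $\mathcal{U}$ lies in $\mathcal{A}$, primitivity in $\mathcal{A}_{r,r'}$ forces primitivity in $\mathcal{U}_{r,r'}$. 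The paper's route has the advantage of not requiring any enumeration of simple modules at this stage, while your route re-uses the dimension-count machinery that was already set up; both are defensible.

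That said, your version contains a concrete error in the dimension computation. You assert $\dim_k \widetilde{L}_r(\nu) = \dim_k L(\nu') = \nu'+1$, but the identity $\dim_k L(\nu') = \nu'+1$ is only true for $0 \leq \nu' \leq p-1$. For $0 \leq \nu' \leq p^r-1$ with $p$-adic expansion $\nu' = \sum_{i=0}^{r-1}\nu'_i p^i$, Steinberg's tensor product theorem gives $\dim_k L(\nu') = \prod_{i=0}^{r-1}(\nu'_i+1)$, exactly as the paper uses in the proof of Proposition 5.5(iii). Consequently the displayed chain
\[
\sum_{\nu'=0}^{p^r-1}(\nu'+1) = \big(p(p+1)/2\big)^r
\]
is false for $r \geq 2$: the left-hand side equals $p^r(p^r+1)/2$, which differs from the right for $r \geq 2$ (for $p=3$, $r=2$: $45 \neq 36$). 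The true computation is
\[
\sum_{\nu'=0}^{p^r-1}\dim_k L(\nu') = \sum_{(\nu'_0,\dots,\nu'_{r-1})\in\{0,\dots,p-1\}^r}\prod_{i=0}^{r-1}(\nu'_i+1) = \prod_{i=0}^{r-1}\Big(\sum_{\nu'_i=0}^{p-1}(\nu'_i+1)\Big) = \big(p(p+1)/2\big)^r,
\]
which does yield the final count $p^{r'-r}\big(p(p+1)/2\big)^r$ you need. So your conclusion is correct, but only because two compensating errors (wrong dimension formula, wrong evaluation of the resulting sum) happen to cancel; as written, the proof contains a false intermediate equality and needs this repair.
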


\begin{proof}
By Proposition 5.5 (iii), 
all  $E\big((a_0, \cdots , a_{r-1}), (j_0, \cdots , j_{r-1}) \big)$ are 
pairwise orthogonal primitive idempotents in  $\mathcal{A}_{r}$ whose sum is $1$. 
On the other hand, by Proposition 5.1 (ii), all 
${\rm Fr}'^r \big(\mu_{a'}^{(r'-r)}\big)$ (with $0 \leq a' \leq p^{r'-r}-1$) are pairwise orthogonal primitive idempotents in  the $k$-algebra ${\rm Fr}'^r(\mathcal{U}_{r'-r}^0)$ whose sum is $1$. 
Note that the multiplication map 
$\mathcal{A}_{r} \otimes {\rm Fr}'^r(\mathcal{U}_{r'-r}^0) \rightarrow \mathcal{A}_{r,r'}$ is not 
only a $k$-linear isomorphism (see Proposition 2.3) 
but also a $k$-algebra isomorphism since $\mathcal{A}$ is commutative. Therefore, all  
$$E\big((a_0, \cdots , a_{r-1}), (j_0, \cdots , j_{r-1}) \big)
{\rm Fr}'^r \big(\mu_{a'}^{(r'-r)}\big)$$  
are pairwise 
orthogonal  primitive idempotents in $\mathcal{A}_{r,r'}$ whose sum is $1$. 
But these are primitive also in $\mathcal{U}_{r,r'}$ since any 
idempotent in $\mathcal{U}$ lies in $\mathcal{A}$. 
\end{proof}

The following lemma is used to determine the PIMs generated by the primitive idempotents. 

\begin{Lemma}
Let $(a_i, j_i) \in \mathcal{P},\ 0 \leq i \leq r-1$, and let $t$ is the largest integer $n$ with 
$X^{(n)} E\big((a_0, \cdots , a_{r-1}), (j_0, \cdots , j_{r-1}) \big) \neq 0$ and $0 \leq n \leq p^r-1$. Then  $t = \sum_{i=0}^{r-1} \big(p-1- \tilde{n}(a_i, j_i)\big) p^i$. 
\end{Lemma}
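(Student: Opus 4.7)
The plan is to proceed by induction on $r$. The base case $r=1$ is a direct computation using Lemma 4.8: writing $E(a,j) = \mu_a \sum_{m=\tilde{n}(a,j)}^{p-1} \tilde{c}_m X^m Y^m$ and applying $X^{(n)}$ on the left gives $X^{(n)} E(a,j) = \mu_{a+2n} \sum_m \tilde{c}_m m! \binom{n+m}{n} X^{(n+m)} Y^m$. For $n, m \leq p-1$ with $n+m \geq p$, Lucas (Proposition 2.2) forces $\binom{n+m}{n} \equiv 0 \pmod p$, so only $m$ with $n+m \leq p-1$ survive. The constraint $\tilde{n}(a,j) \leq m \leq p-1-n$ is satisfiable iff $n \leq p-1-\tilde{n}(a,j)$, and at $n = p-1-\tilde{n}(a,j)$ the unique survivor $m = \tilde{n}(a,j)$ carries a nonzero coefficient $\tilde{c}_{\tilde{n}(a,j)} \tilde{n}(a,j)! \binom{p-1}{n}$, producing a nonzero element of $\mathcal{U}_1$.

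For $r \geq 2$, set $E_1 = E((a_1,\ldots,a_{r-1}),(j_1,\ldots,j_{r-1}))$, so $E = Z(E_1;(a_0,j_0))$, and write $n = n_0 + n_1 p$ with $0 \leq n_0 \leq p-1$ and $0 \leq n_1 \leq p^{r-1}-1$. Lucas gives $X^{(n)} = X^{(n_0)} X^{(n_1 p)}$, and since $X^{(n_1 p)}$ lies in the subalgebra generated by the $X^{(p^i)}$ with $i \geq 1$, Proposition 5.4(iv) yields $X^{(n_1 p)} E = Z(X^{(n_1)} E_1; (a_0, j_0))$; hence $X^{(n)} E = X^{(n_0)} Z(X^{(n_1)} E_1;(a_0,j_0))$. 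The inductive hypothesis identifies the largest $n_1$ with $X^{(n_1)} E_1 \neq 0$ as $t_1 = \sum_{i=0}^{r-2}(p-1-\tilde{n}(a_{i+1},j_{i+1})) p^i$; for $n_1 > t_1$ the expression vanishes by the injectivity in Proposition 5.4(ii), so the task reduces to finding the largest $n_0$ for which $X^{(n_0)} Z(X^{(t_1)} E_1;(a_0,j_0)) \neq 0$.

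The crux is the following observation: $E(a_0, j_0)$ commutes with every element of ${\rm Fr}'(\mathcal{U})$. Indeed, each basis vector ${\rm Fr}'(Y^{(m)} \binom{H}{n} X^{(m')}) = Y^{(mp)} \binom{H}{np} X^{(m'p)}$ commutes with $E(a_0, j_0)$: the factors $X^{(mp)}$ and $Y^{(mp)}$ commute with $\mathcal{A}_1 \ni E(a_0, j_0)$ by Proposition 2.5, while $\binom{H}{np} \in \mathcal{U}^0 \subset \mathcal{A}$ commutes with $E(a_0, j_0) \in \mathcal{A}$ by Proposition 2.4(i). Applying Lemma 5.3 (extended in case (A) or (C) by the transformation $\binom{H}{n} \mapsto \binom{H}{n} + \binom{H}{n-1}$, which preserves $\mathcal{U}_{r-1}$ and is invertible on the $\binom{H}{\cdot}$ basis), one writes $Z(X^{(t_1)} E_1;(a_0,j_0)) = {\rm Fr}'(\tilde{v}) E(a_0, j_0)$ for some nonzero $\tilde{v} \in \mathcal{U}_{r-1}$, and the commutation then recasts this as $E(a_0, j_0) \cdot {\rm Fr}'(\tilde{v})$.

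Proposition 2.3 now gives a $k$-linear isomorphism $\mathcal{U}_1 \otimes {\rm Fr}'(\mathcal{U}_{r-1}) \to \mathcal{U}_r$ via multiplication which is left $\mathcal{U}_1$-linear; under it, $X^{(n_0)} Z(X^{(t_1)} E_1; (a_0, j_0))$ corresponds to $(X^{(n_0)} E(a_0, j_0)) \otimes {\rm Fr}'(\tilde{v})$, which vanishes iff one of the two factors does. Since ${\rm Fr}'$ is injective on $\mathcal{U}_{r-1}$, $\tilde{v} \neq 0$ yields ${\rm Fr}'(\tilde{v}) \neq 0$, so the product vanishes precisely when $X^{(n_0)} E(a_0, j_0) = 0$, equivalently (by the base case) when $n_0 > p-1-\tilde{n}(a_0,j_0)$. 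The largest $n_0$ is thus $p-1-\tilde{n}(a_0, j_0)$, giving $t = (p-1-\tilde{n}(a_0,j_0)) + t_1 p = \sum_{i=0}^{r-1}(p-1-\tilde{n}(a_i,j_i))p^i$. The main obstacle is spotting and verifying the commutativity of $E(a_0, j_0)$ with ${\rm Fr}'(\mathcal{U})$; once in hand, Proposition 2.3 reduces the inductive step cleanly to the base case.
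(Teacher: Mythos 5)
Your proof is correct and follows essentially the same route as the paper's: induction on $r$, splitting $X^{(n)} = X^{(n_0)}X^{(n_1p)}$, applying Proposition 5.4(iv) to pull $X^{(n_1p)}$ inside $Z$, and then using Lemma 4.8 together with Lemma 5.3 and the tensor decomposition of Proposition 2.3 to isolate the largest admissible $n_0$. The only difference is cosmetic: you spell out the Lucas computation in the base case and re-derive the commutativity of $E(a_0,j_0)$ with ${\rm Fr}'(\mathcal{U})$, which is already built into the statement of Lemma 5.3 ($Z(z;(a,j)) = {\rm Fr}'(z')E(a,j) = E(a,j){\rm Fr}'(z')$); the paper leaves these steps implicit.
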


\begin{proof}
If $r=1$, the result follows from Lemma 4.8. Suppose that $r \geq 2$. We use 
 induction on $r$. Let $n$ be an integer with $0 \leq n \leq p^r-1$ and 
write $n=n_0 +  n'p$ for some integers $n_0, n'$ with $0 \leq n_0 \leq p-1$ and 
$0 \leq n' \leq p^{r-1}-1$.  Then 
\begin{eqnarray*}
\lefteqn{X^{(n)} E\big((a_0, \cdots , a_{r-1}), (j_0, \cdots , j_{r-1}) \big)} \\
& = & X^{(n_0)} X^{(n'p)} 
Z\Big( E\big((a_1, \cdots , a_{r-1}), (j_1, \cdots , j_{r-1}) \big);  (a_0,j_0)\Big)  \\
& = & X^{(n_0)}  
Z\Big( X^{(n')}E\big((a_1, \cdots , a_{r-1}), (j_1, \cdots , j_{r-1}) \big);  (a_0,j_0)\Big)
\end{eqnarray*}
by Proposition 5.4 (iv). By induction the largest integer $n'$ with  
$$X^{(n')}  E\big((a_1, \cdots , a_{r-1}), (j_1, \cdots , j_{r-1}) \big) \neq 0$$ 
and $0 \leq n' \leq p^{r-1}-1$ is $\sum_{i=1}^{r-1} \big(p-1-\tilde{n}(a_i,j_i)\big)p^{i-1}$. 
Then, by Lemma 5.3, there is a nonzero element $z' \in \mathcal{U}$ 
such that 
$$X^{(n_0)}  
Z\Big( X^{(n')}E\big((a_1, \cdots , a_{r-1}), (j_1, \cdots , j_{r-1}) \big);  (a_0,j_0)\Big)
= X^{(n_0)}E(a_0, j_0) {\rm Fr}'(z'). $$
Hence the largest integer $n_0$ where this term does not vanish is 
$p-1-\tilde{n}(a_0,j_0)$ by Lemma 4.8. Thus we obtain 
\begin{eqnarray*} 
t & = & \big(p-1-\tilde{n}(a_0,j_0)\big) +  p \sum_{i=1}^{r-1} 
\big(p-1-\tilde{n}(a_i,j_i)\big)p^{i-1} \\
  & = &  \sum_{i=0}^{r-1} \big(p-1- \tilde{n}(a_i, j_i)\big) p^i, 
\end{eqnarray*}
and the lemma follows. 
\end{proof}

Now we describe the main result.

\begin{Theorem}
For $0 \leq i \leq r-1$, let $(a_i,j_i) \in \mathcal{P}$, and set 
$$\beta_i = 
\left\{ \begin{array}{ll} 
{p-2j_i-1} & {\mbox{if $(a_i, j_i)$ satisfies {\rm (B)} or {\rm (C)},}} \\
{2j_i-1} & {\mbox{if $(a_i, j_i)$ satisfies {\rm (A)} or {\rm (D)}}} 
\end{array} \right.$$
and 
$$b_i = 
\left\{ \begin{array}{ll} 
{a_i-p} & {\mbox{if $(a_i, j_i)$ satisfies {\rm (A)} or {\rm (C)},}} \\
{a_i} & {\mbox{if $(a_i, j_i)$ satisfies {\rm (B)} or {\rm (D)}}} 
\end{array} \right.$$
regarding $a_i$ and $j_i$ as the corresponding integers with 
$0 \leq a_i \leq p-1$ and $0 \leq j_i \leq (p-1)/2$ except for $j_i$ when $p=2$. 
Moreover, let $a' \in \mathbb{Z}$ with  
$0 \leq a' \leq p^{r'-r}-1$.  Then the element 
$$E\big((a_0, \cdots , a_{r-1}), (j_0, \cdots , j_{r-1}) \big)
{\rm Fr}'^r \big(\mu_{a'}^{(r'-r)}\big)$$ generates a $\mathcal{U}_{r,r'}$-module 
isomorphic to 
$\widetilde{Q}_r \big( \sum_{i=0}^{r-1} \beta_i p^i  + a' p^r\big)$ if  
$\sum_{i=0}^{r-1} b_i p^i \geq 0$, and to 
$\widetilde{Q}_r \big( \sum_{i=0}^{r-1} \beta_i p^i  + (a'+1) p^r\big)$ if  
$\sum_{i=0}^{r-1} b_i p^i < 0$.  
In particular,  the element 
$E\big((a_0, \cdots , a_{r-1}), (j_0, \cdots , j_{r-1}) \big)$ generates a $\mathcal{U}_{r}$-module 
isomorphic to 
$Q_r \big( \sum_{i=0}^{r-1} \beta_i p^i \big)$. 
\end{Theorem}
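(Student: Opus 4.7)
The plan is to invoke Proposition 3.1 on the element
$v = E\bigl((a_0, \ldots, a_{r-1}), (j_0, \ldots, j_{r-1})\bigr){\rm Fr}'^r\bigl(\mu_{a'}^{(r'-r)}\bigr)$,
which is a primitive idempotent of $\mathcal{U}_{r,r'}$ by Proposition 5.6 and hence generates a PIM isomorphic to $\widetilde{Q}_r(\lambda)$ for some $\lambda = \lambda' + \lambda''p^r$; the task becomes to extract $\lambda$ from the two invariants that Proposition 3.1 requires, namely the $\mathcal{U}_{r'}^0$-weight $\nu$ of $v$ and the largest $t \in \{0,1,\ldots,p^r-1\}$ with $X^{(t)}v \neq 0$.

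For the weight, I would first observe that $E(\ldots) \in \mathcal{A}_r$ centralizes $\mathcal{U}^0$ (Proposition 2.4 (ii)), so for $0 \leq l < r$ Proposition 5.5 (i) gives ${H \choose p^l}v = \binom{\sum_i b_ip^i}{p^l}v$, while for $l \geq r$ I would rewrite ${H \choose p^l} = {\rm Fr}'^r\bigl({H \choose p^{l-r}}\bigr)$, commute this factor past $E(\ldots)$, and use that ${\rm Fr}'^r$ is an algebra homomorphism on $\mathcal{U}^0$ together with Proposition 5.1 (i) to obtain ${H \choose p^l}v = \binom{a'}{p^{l-r}}v$. Reading off base-$p$ digits via Proposition 2.2 shows $v$ is a $\mathcal{U}_{r'}^0$-weight vector with $\nu \equiv \sum_ib_ip^i + a'p^r \pmod{p^{r'}}$. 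Writing $\nu = \nu' + \nu''p^r$ with $0 \leq \nu' \leq p^r-1$, one gets $\nu' = \sum_ib_ip^i$, $\nu'' = a'$ when $\sum_ib_ip^i \geq 0$, and $\nu' = \sum_ib_ip^i + p^r$, $\nu'' = a'$ when $\sum_ib_ip^i < 0$.

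For $t$, Lemma 5.7 supplies $\sum_i(p-1-\tilde n(a_i,j_i))p^i$ as the corresponding value for $E(\ldots)$ alone, and the remark after Proposition 2.3 (injectivity of the multiplication $\mathcal{U}_r \otimes {\rm Fr}'^r(\mathcal{U}) \to \mathcal{U}$) ensures that right-multiplying a nonzero element of $\mathcal{U}_r$ by the nonzero idempotent ${\rm Fr}'^r(\mu_{a'}^{(r'-r)})$ cannot kill it, so the same value serves for $v$. The computational heart of the argument is then the identity
$$2\bigl(p-1-\tilde n(a_i,j_i)\bigr) + b_i = 2p-2-\beta_i,$$
which I would verify for each $i$ by direct substitution under each of the conditions (A)--(D) using Lemma 4.7 and the definitions of $b_i$, $\beta_i$ (the three $p=2$ situations being immediate checks). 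Summing yields $2t + \sum_ib_ip^i = 2p^r-2-\sum_i\beta_ip^i$.

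Feeding these data into Proposition 3.1: when $\sum_ib_ip^i \geq 0$, $\nu'+2t = 2p^r-2-\sum_i\beta_ip^i \leq 2p^r-2$, placing us in case (i) and forcing $\lambda' = \sum_i\beta_ip^i$, $\lambda'' = a'$; when $\sum_ib_ip^i < 0$, $\nu'+2t = 3p^r-2-\sum_i\beta_ip^i > 2p^r-2$, placing us in case (ii) or (iii) according as $a' < p^{r'-r}-1$ or $a' = p^{r'-r}-1$, and both branches give $\lambda' = \sum_i\beta_ip^i$ with $\lambda'' \equiv a'+1 \pmod{p^{r'-r}}$. Reducing modulo $p^{r'}$ reproduces exactly the two displayed formulas for $\lambda$, and the final ``In particular'' assertion follows by restricting the $\mathcal{U}_{r,r'}$-module structure to $\mathcal{U}_r$ and invoking $\widetilde{Q}_r(\lambda) \cong Q_r(\lambda')$ as $\mathcal{U}_r$-modules. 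The main obstacle will be the case-by-case verification of the key identity --- it is uniform in form across (A)--(D) but each case requires unpacking the explicit formulas from Lemma 4.7, and additional care is needed for the boundary $\nu''=p^{r'-r}-1$ to align the wraparound in Proposition 3.1 (iii) with the congruence $\lambda'' \equiv a'+1 \pmod{p^{r'-r}}$.
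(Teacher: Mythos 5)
Your proposal is correct and follows essentially the same route as the paper's proof: establish primitivity via Proposition 5.6, compute the $\mathcal{U}_{r'}^0$-weight via Propositions 5.5(i) and 5.1, obtain $t$ from Lemma 5.7, combine with the identity $b_i + 2(p-1-\tilde{n}(a_i,j_i)) = 2(p-1)-\beta_i$ (derived from Lemma 4.7), and feed into Proposition 3.1 with the same three-way case split. Your explicit use of the tensor-multiplication injectivity to transfer the value of $t$ from $E(\ldots)$ to the full generator $v$ is a minor point the paper leaves implicit, and is a sound way to close it.
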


\begin{proof}
Since $E\big((a_0, \cdots , a_{r-1}), (j_0, \cdots , j_{r-1}) \big)
{\rm Fr}'^r \big(\mu_{a'}^{(r'-r)}\big)$ is a primitive idempotent in 
$\mathcal{U}_{r,r'}$ by Proposition 5.6, it generates a projective indecomposable $\mathcal{U}_{r,r'}$-module isomorphic to $\widetilde{Q}_r(\lambda'+\lambda'' p^r)$ 
for certain integers $\lambda'$ and $\lambda''$ with $0 \leq \lambda' \leq p^{r}-1$ 
and  $0 \leq \lambda'' \leq p^{r'-r}-1$. 
The element $$E\big((a_0, \cdots , a_{r-1}), (j_0, \cdots , j_{r-1}) \big)
{\rm Fr}'^r \big(\mu_{a'}^{(r'-r)}\big)$$ has $\mathcal{U}_{r'}^0$-weight 
$\sum_{i=0}^{r-1} b_i p^i  + a' p^r$ if $\sum_{i=0}^{r-1} b_i p^i  \geq 0$, 
and $\sum_{i=0}^{r-1} b_i p^i  +p^r + a' p^r$ if 
$\sum_{i=0}^{r-1} b_i p^i  < 0$, since 
\begin{eqnarray*}
\lefteqn{E\big((a_0, \cdots , a_{r-1}), (j_0, \cdots , j_{r-1}) \big)
{\rm Fr}'^r \big(\mu_{a'}^{(r'-r)}\big)} \\
& = & E\big((a_0, \cdots , a_{r-1}), (j_0, \cdots , j_{r-1}) \big)
\mu_{\sum_{i=0}^{r-1} b_i p^i }^{(r)}{\rm Fr}'^r \big(\mu_{a'}^{(r'-r)}\big) \\
& = & 
\left\{ 
\begin{array}{ll}
{\mu_{\sum_{i=0}^{r-1} b_i p^i +a' p^r}^{(r)}E\big((a_0, \cdots , a_{r-1}), (j_0, \cdots , j_{r-1}) \big)} & {\mbox{if $\sum_{i=0}^{r-1} b_i p^i  \geq 0$},} \\
&  \\
{\mu_{\sum_{i=0}^{r-1} b_i p^i +p^r+a' p^r}^{(r)}E\big((a_0, \cdots , a_{r-1}), (j_0, \cdots , j_{r-1}) \big)} & {\mbox{if $\sum_{i=0}^{r-1} b_i p^i  < 0$}} 
\end{array}.
\right.
\end{eqnarray*}
By Lemma 4.7,  we see  that 
\begin{eqnarray*}
b_i +2\big(p-1-\tilde{n}(a_i, j_i)\big)
& = &  
 \left\{ \begin{array}{ll} 
{p-1+2j_i} & {\mbox{if $(a_i, j_i)$ satisfies {\rm (B)} or {\rm (C)},}} \\
{2p-1-2j_i} & {\mbox{if $(a_i, j_i)$ satisfies {\rm (A)} or {\rm (D)}}} 
\end{array} \right. \\
& = & 2(p-1) - \beta_i.
\end{eqnarray*}
Moreover,  by Lemma 5.7, the largest integer $t$ satisfying $0 \leq t \leq p^r-1$ and 
$X^{(t)} E\big((a_0, \cdots , a_{r-1}), (j_0, \cdots , j_{r-1}) \big) \neq 0$
is $\sum_{i=0}^{r-1} \big(p-1-\tilde{n}(a_i,j_i)\big) p^i$. 

Suppose that $\sum_{i=0}^{r-1} b_i p^i  \geq 0$.  Then we have  
\begin{eqnarray*}
\sum_{i=0}^{r-1} b_i p^i +2t 
& = & \sum_{i=0}^{r-1} \Big( b_i +2 \big(p-1-\tilde{n}(a_i , j_i)\big) \Big)p^i \\
& = &  \sum_{i=0}^{r-1} \big(2(p-1) - \beta_i \big)p^i  \\
& \leq  & 2p^r-2, 
\end{eqnarray*}
and hence $\lambda'' =a'$ and
\begin{eqnarray*}
\lambda' 
& = & 2p^r-2 - \bigg(\sum_{i=0}^{r-1} b_i p^i  +2t \bigg) \\
& = & 2p^r-2 -  \sum_{i=0}^{r-1} \big(2(p-1) - \beta_i \big)p^i  \\
& = & \sum_{i=0}^{r-1} \beta_i p^i
\end{eqnarray*}
by Proposition 3.1 (i). 

Finally, suppose   that $\sum_{i=0}^{r-1} b_i p^i < 0$. Then we have 
$$\bigg( \sum_{i=0}^{r-1} b_i p^i +p^r \bigg) +2t 
 =   \sum_{i=0}^{r-1} \big(2(p-1) - \beta_i \big)p^i +p^r > 2p^r-2.  
$$
If $a' \neq p^{r'-r}-1$, then $\lambda''= a'+1$ and 
\begin{eqnarray*}
\lambda' 
& = & 3p^r-2 - \bigg( \sum_{i=0}^{r-1} b_i p^i + p^r +2t \bigg) \\
& = & 3p^r-2 -  \sum_{i=0}^{r-1} \big(2(p-1) - \beta_i\big)p^i - p^r \\
& = & \sum_{i=0}^{r-1} \beta_i p^i 
\end{eqnarray*}
by Proposition 3.1 (ii). If $a' = p^{r'-r}-1$, then $\lambda''= 0$ and 
$\lambda' = \sum_{i=0}^{r-1} \beta_i p^i$ by Proposition 3.1 (iii), but 
$\widetilde{Q}_r(\sum_{i=0}^{r-1} \beta_i p^i) \cong 
\widetilde{Q}_r(\sum_{i=0}^{r-1} \beta_i p^i+ (a'+1)p^{r})$ as $\mathcal{U}_{r,r'}$-modules since 
$$\sum_{i=0}^{r-1} \beta_i p^i \equiv \sum_{i=0}^{r-1} \beta_i p^i +p^{r'} =
\sum_{i=0}^{r-1} \beta_i p^i +(a'+1)p^{r}\ ({\rm mod}\ p^{r'}). $$
Thus the first statement is proved. The second statement is clear. 
\end{proof}
\ \\
{\large {\bf Acknowledgment}} \\

The author would like to thank the referee for carefully reading the first and the 
second versions of this paper and giving  
helpful  comments.


\end{document}